\documentclass[a4paper]{article}

\usepackage[top=3cm,bottom=3cm,left=3cm,right=3cm]{geometry}

\usepackage{palatino}
\usepackage{latexsym}
\usepackage{bbold} 
\usepackage[mathscr]{eucal}
\usepackage{amsmath}
\usepackage{amsthm}
\usepackage{amsfonts}
\usepackage{amssymb}
\usepackage{amscd}
\usepackage{color}
\usepackage{graphicx}
\usepackage{graphics}
\usepackage{pifont}
\usepackage[makeroom]{cancel}
\usepackage[normalem]{ulem}
\usepackage[dvipsnames]{xcolor}
\usepackage{tikz}
\usepackage{cite}
\usepackage{cleveref} 
\usepackage{enumitem} 

\usepackage{mathtools} 


\theoremstyle{plain}
\newtheorem{theorem}{Theorem}
\newtheorem{remark}[theorem]{Remark}
\newtheorem{lemma}[theorem]{Lemma}
\newtheorem{proposition}[theorem]{Proposition}
\newtheorem{definition}[theorem]{Definition}

\newcommand{\T}{\mathbb{T}}
\newcommand{\R}{\mathbb{R}}
\newcommand{\N}{\mathbb{N}}

\newcommand{\kb}[1]{{\color{ForestGreen}(\textbf{KB: }#1)}}

\special{! TeXDict begin /landplus90{true}store end }

\title{Approximate controllability of a bilinear wave equation and minimum time}

\begin{document}
\author{Karine Beauchard\footnote{Univ Rennes, CNRS, IRMAR - UMR 6625, F-35000 Rennes, France (karine.beauchard@ens-rennes.fr)},\quad Thomas Perrin\footnote{Univ Rennes, CNRS, IRMAR - UMR 6625, F-35000 Rennes, France (thomas.perrin@ens-rennes.fr)},\quad Eugenio Pozzoli\footnote{Univ Rennes, CNRS, IRMAR - UMR 6625, F-35000 Rennes, France (eugenio.pozzoli@univ-rennes.fr)}}
\maketitle

\abstract{We study the global approximate controllability (GAC) of a Klein-Gordon wave equation, posed on the torus $\mathbb{T}^d$ of arbitrary dimension $d\in \mathbb{N}^*$, with bilinear control potentials supported on the first $(2d+1)$-Fourier modes. Let $Z(W_0)\subset \mathbb{T}^d$ be the set of essential zeroes of the initial state $W_0\in H^1\times L^2(\mathbb{T}^d)$, and $r(W_0)\geq 0$ be the maximum radius of a ball of $\mathbb{T}^d$ contained in $Z(W_0)$. Due to finite speed of propagation, the minimum control time starting from $W_0$ is necessarily larger than or equal to $r(W_0)$. We prove the following three facts. 

In low dimensions $d \in \{1,2\}$: the minimum time for GAC from $W_0 \neq 0$ is equal to $r(W_0)$. 

In any dimensions $d\geq 3$: the minimum time for GAC from $W_0$ is zero if $Z(W_0)$ has zero Lebesgue measure; and the GAC in sufficiently large time from all $W_0\neq 0$.

The proof strategy consists in combining Lie bracket techniques \emph{à la Agrachev-Sarychev} with the propagation of well-prepared positive states.
}




\section{Introduction}

\subsection{The system}

We consider the Klein-Gordon wave equation controlled with a potential, posed on the multi-dimensional torus $\T^d:=\R^d/2\pi \mathbb{Z}^d$, 
\begin{equation}\label{eq:wave_bis}
{\partial^2_t}w(t,x)=\left(\Delta-1+V(x)+\sum_{j=0}^{2d}u_j(t)V_j(x)\right)w(t,x),\quad (t,x)\in \mathbb{R} \times \mathbb{T}^d,
\end{equation}
where 
$d \in \mathbb{N}^*$,
$u=(u_0,\dots,u_{2d}):\mathbb{R} \to \mathbb{R}^{2d+1}$,
\begin{equation}\label{eq:cos-sin_bis}
V_0(x)=1, \quad V_{2j-1}(x)=\sin(x_j), \quad V_{2j}(x)=\cos(x_j) \quad \text{ for } j=1,\dots,d,
\end{equation}
and $V \in L^{\rho}(\T^d)$ where
\begin{equation} \label{def:q_rho}
\rho = \left\lbrace \begin{array}{ll}
2 & \text{ if } d=1, \\
2^+ & \text{ if } d=2, \\
d & \text{ if } d \geq 3.
\end{array}\right.
\end{equation}
These assumptions are valid in the whole article.
We introduce its vectorised version, for $W=(w,\partial_t w)$,
\begin{equation} \label{wave_vect}
\frac{dW}{dt}(t)=\left( A+VB+\sum_{j=0}^{2d} u_j(t) V_j B \right) W(t)
\end{equation}
where
\begin{equation} \label{def:B}
A=\begin{pmatrix}
    0 & 1 \\ \Delta-1 & 0
\end{pmatrix},
\qquad
B=\begin{pmatrix}
    0 & 0 \\ 1 & 0
\end{pmatrix}.
\end{equation}
Then, the following well-posedness result holds (see \Cref{sec:WP} for a proof).

\begin{proposition} \label{Prop:WP}
For every $T>0$, $u:[0,T] \to \R^{2d+1}$ piecewise constant and $W_0 \in H^1 \times L^2(\T^d)$ there exists a unique solution 
$W \in C^0([0,T],H^1 \times L^2(\T^d)) \cap C^1([0,T],L^2 \times H^{-1}(\T^d))$ of \eqref{wave_vect} associated with the initial condition
$W(0)=W_0$. It will be denoted $W(t;u,W_0)$.   
\end{proposition}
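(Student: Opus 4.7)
The plan is to reduce to the case of a constant control, treat the free evolution generated by $A$ as a $C_0$-group on the energy space $\mathcal{H}:=H^1\times L^2(\T^d)$, and add the multiplicative terms as a bounded perturbation; the Sobolev exponent $\rho$ in \eqref{def:q_rho} is precisely tuned so that this perturbation is bounded on $\mathcal{H}$.

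\textbf{Step 1: free evolution.} First I would check that $A$ defined in \eqref{def:B} generates a strongly continuous group $(e^{tA})_{t\in\R}$ on $\mathcal{H}$. This is classical for the Klein--Gordon equation: with the scalar product $\langle(w,\dot w),(w',\dot w')\rangle_\mathcal{H}=\int (\nabla w\cdot\nabla w' + w w' + \dot w \dot w')\,dx$, the operator $A$ with domain $D(A)=H^2\times H^1(\T^d)$ is skew-adjoint, so Stone's theorem gives a unitary group. In particular $\|e^{tA}\|_{\mathcal{L}(\mathcal{H})}=1$.

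\textbf{Step 2: boundedness of the potential term.} Let $V_u(x):=V(x)+\sum_{j=0}^{2d}u_j V_j(x)$ with the $V_j$'s as in \eqref{eq:cos-sin_bis}. Since each $V_j$ is smooth and $V\in L^\rho$, we have $V_u\in L^\rho(\T^d)$. The key point is that the multiplication operator $w\mapsto V_u w$ maps $H^1(\T^d)$ into $L^2(\T^d)$ boundedly, which is exactly where the choice \eqref{def:q_rho} enters: by Sobolev embedding $H^1(\T^d)\hookrightarrow L^{q}(\T^d)$ with $q=\infty$ for $d=1$, any $q<\infty$ for $d=2$, and $q=2d/(d-2)$ for $d\geq 3$; Hölder's inequality with $\tfrac{1}{2}=\tfrac{1}{\rho}+\tfrac{1}{q}$ then yields $\rho=2$, $\rho=2^+$, $\rho=d$ respectively, matching \eqref{def:q_rho}. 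Consequently the operator $V_u B:\mathcal{H}\to\mathcal{H}$, $(w,\dot w)\mapsto(0,V_u w)$, is bounded with norm controlled by $\|V\|_{L^\rho}+C(\|u\|_\infty)$.

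\textbf{Step 3: gluing.} Since $u$ is piecewise constant, fix a partition $0=t_0<t_1<\dots<t_N=T$ on each subinterval of which $u\equiv u^{(k)}$ is constant. On $[t_{k-1},t_k]$ the equation reads $dW/dt=(A+V_{u^{(k)}}B)W$. By Step 2, $V_{u^{(k)}}B$ is a bounded perturbation of $A$, so $A+V_{u^{(k)}}B$ generates a $C_0$-group on $\mathcal{H}$ (standard bounded perturbation theorem for $C_0$-semigroups, cf.\ Pazy). This provides a unique mild solution $W\in C^0([t_{k-1},t_k],\mathcal{H})$ starting from $W(t_{k-1})$, and concatenating over $k=1,\dots,N$ yields a unique $W\in C^0([0,T],\mathcal{H})$ with $W(0)=W_0$. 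Finally, reading off the equation \eqref{wave_vect} and using that $A:\mathcal{H}\to L^2\times H^{-1}$ and $V_{u^{(k)}}B:\mathcal{H}\to L^2\times H^{-1}$ (actually into $\{0\}\times L^2$) are both bounded, one gets $dW/dt\in C^0([0,T],L^2\times H^{-1})$, i.e.\ $W\in C^1([0,T],L^2\times H^{-1})$.

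\textbf{Main obstacle.} The only nontrivial point is the boundedness in Step 2, whose verification is a direct Sobolev/Hölder computation; everything else is a routine application of semigroup perturbation theory and concatenation across the finitely many constancy intervals of $u$.
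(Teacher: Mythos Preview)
Your proposal is correct and follows essentially the same route as the paper: the paper shows that $A$ generates a group of isometries on $H^1\times L^2$ (via explicit Fourier series rather than Stone's theorem), proves that multiplication by any $\phi\in L^\rho$ sends $H^1$ boundedly into $L^2$ via the same Sobolev/H\"older computation you wrote, and then obtains the group generated by $A+V_uB$ through a Duhamel fixed-point argument (which is precisely the proof of the bounded-perturbation theorem you invoke); the piecewise constant case is handled by concatenation exactly as you describe.
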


System \eqref{wave_vect} is a control system, with state $W(t) \in H^1 \times L^2(\T^d)$ and control $u:[0,T] \to \R^{2d+1}$ piecewise constant. The control potentials $V_0B,\dots,V_mB$ define bounded operators on $H^{s+1}\times H^{s}(\T^d)$ for any $s\geq 0$, therefore exact controllability in those spaces is impossible \cite{BMS}. We study the approximate controllability.


\begin{definition}
Let $W_0 \in H^1 \times L^2(\T^d)$ and $T \in [0,\infty)$. 
\begin{itemize}[topsep=0cm, parsep=0cm, itemsep=0cm]
\item A target $W_f=(w_f,\dot{w}_f) \in H^1 \times L^2(\T^d)$ is $H^1 \times L^2$-approximately reachable in time $T^+$ from $W_0$ if, for every $\epsilon>0$, there exist $T_1 \in [T,T+\epsilon]$ and $u \in PWC([0,T_1],\R^{2d+1})$ such that $\|W(T_1;u,W_0)-W_f\|_{H^1 \times L^2}<\epsilon$.
\item System \eqref{wave_vect} is $H^1 \times L^2$-approximately controllable in time $T^+$ from $W_0$ if any target in $H^1 \times L^2(\T^d)$ is $H^1 \times L^2$-approximately reachable in time $T^+$ from $W_0$. 
\item The minimum time for $H^1 \times L^2$-approximate controllability from $W_0$ is 
\begin{equation}\label{eq:equality-rays}
\mathcal{T}(W_0)\!=\!\inf\!\left\{ T\in \R_+ ; \text{ System } \eqref{wave_vect} \text{ is } H^1 \times L^2 \text{-approximately controllable in time } T^+ \text{ from } W_0 \right\}
\end{equation}
(it is $+\infty$ when System \eqref{wave_vect} is not $H^1 \times L^2$-approximately controllable from $W_0$).
\item System \eqref{wave_vect} is small-time $H^1 \times L^2$-approximately controllable from $W_0$ if $\mathcal{T}(W_0)=0$.
\end{itemize}
\end{definition}

\begin{definition}[Notations $Z$, $r$]
For $n \in \mathbb{N}^*$ and $f \in L^{2}(\T^d,\R^n)$, 
\begin{itemize}[topsep=0cm, parsep=0cm, itemsep=0cm]
\item $Z(f)\subset\T^d$ denotes the set of zeros of $f$, i.e.  $Z(f)=f^{-1}(\{0\})$,
\item $r(f)\in \R_+$ denotes the maximum radius of a ball of $\T^d$ contained in $Z(f)$\footnote{The set $Z(f)$ is defined up to a set of zero Lebesgue measure. For simplicity, we use the same vocabulary for $Z(f)$ as for perfectly well-defined sets. For example, a correct definition of $r(f)$ would be $r(f)=\sup \{ \widetilde{r} \geq 0 ; f=0$ a.e. on a ball of $\T^d$ of radius  $\widetilde{r} \}$. Similarly, we write ``$Z(f)=Z(g)$'' when $f=0$ a.e. on $Z(g)$ and $g=0$ a.e. on $Z(f)$. We also say ``$Z(f)$ is closed'' when there exists a representative $\widetilde{f}$ of the equivalence class of $f$
(i.e. $\widetilde{f}$ is a function defined at every point $x \in \T^d$) such that $\widetilde{f}^{-1}(\{0\})$ is closed.}.
\end{itemize}
\end{definition}

Due to the finite propagation speed of waves,  we have the following lower bound on the minimum time (the proof is classical, recalled for completeness in \Cref{sec:propagation}).

\begin{proposition} \label{Prop:propagation}
For any $W_0 \in H^1 \times L^2(\T^d)$, we have $\mathcal{T}(W_0)\geq r(W_0)$.
\end{proposition}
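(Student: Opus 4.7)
The plan is to derive this lower bound from the classical finite speed of propagation for the wave operator in \eqref{eq:wave_bis}, together with a short contradiction argument. The key preliminary statement is the following localized vanishing property: if $W_0 \in H^1 \times L^2(\T^d)$ vanishes a.e.\ on a ball $B(x_0, \tilde r) \subset \T^d$, then for every piecewise constant control $u$ and every $t \in [0, \tilde r]$ the solution $W(t; u, W_0)$ vanishes a.e.\ on the shrunken ball $B(x_0, \tilde r - t)$.

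\textbf{Step 1: finite speed of propagation.} I would establish this claim by the classical multiplier method on the backward light cone $\{(s,x) : 0 \leq s \leq t,\ |x - x_0| \leq \tilde r - s\}$. Define the local energy
\[
E(s) = \tfrac{1}{2} \int_{B(x_0, \tilde r - s)} \bigl( |\partial_s w|^2 + |\nabla w|^2 + |w|^2 \bigr)\, dx,
\]
multiply \eqref{eq:wave_bis} by $\partial_s w$, and integrate over $B(x_0, \tilde r - s)$. The boundary flux generated by the shrinking domain has a favorable sign because the rate at which the ball shrinks matches the characteristic wave speed. The zero-order contributions from $-w + V w + \sum_j u_j V_j w$ multiplied by $\partial_s w$ are handled by Cauchy--Schwarz; the only delicate term is $\int V w\,\partial_s w\, dx$, which I would control by Hölder with exponents $(\rho, 2\rho/(\rho-2), 2)$ together with the Sobolev embedding $H^1(\T^d) \hookrightarrow L^{2\rho/(\rho-2)}(\T^d)$. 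The choice of $\rho$ in \eqref{def:q_rho} is precisely the sharp threshold making this chain close (for $d \geq 3$ it demands $\rho \geq d$; for $d \leq 2$ the embedding is comfortable). The outcome is a differential inequality $\tfrac{dE}{ds}(s) \leq C\bigl(1 + \|V\|_{L^\rho} + |u(s)|\bigr) E(s)$, and Grönwall with $E(0) = 0$ yields $E \equiv 0$ on $[0, t]$. This Hölder--Sobolev balancing is the only substantive analytical ingredient of the whole argument.

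\textbf{Step 2: contradiction argument.} Suppose, by contradiction, that $\mathcal{T}(W_0) < r(W_0)$. Fix $T$ with $\mathcal{T}(W_0) < T < r(W_0)$, then $\epsilon > 0$ and $\tilde r$ with $T + \epsilon < \tilde r < r(W_0)$ and $\epsilon < 1$; by definition of $r(W_0)$, there exists $x_0 \in \T^d$ with $W_0 = 0$ a.e.\ on $B(x_0, \tilde r)$. Choose $W_f \in H^1 \times L^2(\T^d)$ with compact support in $B(x_0, \tilde r - T - \epsilon)$ and $\|W_f\|_{H^1 \times L^2} = 1$. By the definition of $\mathcal{T}(W_0)$, there exists $T_0 \leq T$ such that \eqref{wave_vect} is $H^1 \times L^2$-approximately controllable in time $T_0^+$ from $W_0$; applied to the target $W_f$ with parameter $\epsilon$, this yields $T_1 \in [T_0, T_0 + \epsilon] \subseteq [0, T + \epsilon]$ and a piecewise constant $u$ with $\|W(T_1;u,W_0) - W_f\|_{H^1 \times L^2} < \epsilon$. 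Step 1 applied on $B(x_0, \tilde r)$ gives $W(T_1;u,W_0) = 0$ a.e.\ on the open ball $B(x_0, \tilde r - T_1) \supseteq \operatorname{supp}(W_f)$, and consequently its distributional gradient also vanishes on that ball; restricting the $H^1 \times L^2$-norm of the difference to this ball then gives $\|W_f - W(T_1;u,W_0)\|_{H^1 \times L^2} \geq \|W_f\|_{H^1 \times L^2} = 1 > \epsilon$, the desired contradiction.
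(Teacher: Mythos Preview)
Your approach is essentially the same as the paper's: the local energy on a shrinking ball, the multiplier $\partial_s w$, the boundary term with favorable sign, and the Hölder--Sobolev estimate on the potential contribution are exactly what the paper does in its appendix. Your Step~2 contradiction argument is a carefully spelled-out version of what the paper leaves implicit (it simply says finite speed of propagation ``gives the conclusion'').

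One point deserves attention. Your Step~1 computation---differentiating $E(s)$ and pairing $\Delta w$ against $\partial_s w$---is formal at the regularity $W_0 \in H^1 \times L^2$, since then $\partial_s w \in L^2$ while $\Delta w \in H^{-1}$, and the integration by parts over the moving domain is not directly justified. The paper handles this in two steps: it first carries out the energy estimate for $W_0 \in H^2 \times H^1$ (where everything is legitimate), and then passes to general $H^1 \times L^2$ data by approximating $W_0$ with a compactly supported mollification $W_0^\varepsilon \in H^2 \times H^1$ that still vanishes on a slightly smaller ball $B(x_0, r')$, applying the smooth case, and letting $\varepsilon \to 0$ and $r' \to \tilde r$. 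You should mention this regularization/density step; without it the derivation of the Grönwall inequality is not rigorous. A related detail the paper tracks is that the Sobolev constant in $\|Vw\|_{L^2(B)} \leq C \|V\|_{L^\rho(B)} \|w\|_{H^1(B)}$ must be uniform over the family of balls $B(x_0, \tilde r - s)$ for $s$ in a compact subinterval of $[0,\tilde r)$.
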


Thus, in the bilinear control of the wave equation, the support of the initial condition plays the same role as the support of the source term in the linear control problem. Note that, when $Z(f)$ is closed, then $r(f)$ is the minimum time such that, for any point $x\in Z(f)$, there exists a geodesic of $\T^d$ starting at $x$ reaching the open set $\T^d \setminus Z(f)$ in time smaller than $r(f)$: this formulation intervenes in the geometric control conditions (see the seminal works of Bardos--Lebeau--Rauch and Burq--Gérard \cite{BLR, BurqGerard97}).

\medskip

In particular, \Cref{Prop:propagation} proves the implication 
\begin{equation} \label{implic}
\text{ Small-time } H^1 \times L^2\text{-approximate controllability from } W_0
\quad \Rightarrow \quad
\text{Int}\left( Z(W_0) \right) = \emptyset.
\end{equation}
Thus, natural open problems are the following ones.
\begin{itemize}[topsep=0cm, parsep=0cm, itemsep=0cm]
\item \emph{Question 1:} Is the reciprocal of \eqref{implic} true?
\item \emph{Question 2:} Is $\mathcal{T}(W_0)=r(W_0)$ for any $W_0 \in H^1 \times L^2(\T^d)$?
\end{itemize}

\subsection{Main results}

Our main results are summarized in the following statement, where $|\cdot|$ denotes the Lebesgue measure on $\T^d$.
\begin{theorem} \label{Main}
Let  $W_0 \in H^1 \times H^1 (\T^d)$ be a non zero initial condition.
\begin{enumerate}
    \item If $|Z(W_0)|=0$, then $\mathcal{T}(W_0)=0$.
    \item We assume $V=0$.
    \begin{enumerate}
        \item If $d \in \{1,2\}$, then $\mathcal{T}(W_0)=r(W_0)$.
        \item If $d=3$, then $\mathcal{T}(W_0)<\infty$.
        \item If $d \geq 4$ and $Z(W_0)$ is closed, then $\mathcal{T}(W_0)<\infty$.
    \end{enumerate}
\end{enumerate}
\end{theorem}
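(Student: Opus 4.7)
The plan is to follow the two-step scheme announced in the abstract: a Lie-bracket / Agrachev-Sarychev argument giving small-time approximate controllability from sufficiently ``rich'' states (Part 1), coupled with a preparation argument in which free propagation turns a generic $W_0$ into such a rich state, at cost at most $r(W_0)$ in low dimensions and finite time in higher dimensions (Parts 2(a)--(c)).

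\textbf{Step 1 (Part 1, the Lie-bracket core).} The goal is to prove that if $|Z(W)|=0$, then \eqref{wave_vect} is small-time $H^1\times L^2$-approximately controllable from $W$. A direct matrix computation from \eqref{def:B} shows that the commutator of operators satisfies $[A,V_jB]=\operatorname{diag}(V_j,-V_j)$, a pure multiplication matrix, while $[A,[A,V_jB]]$ is a first-order differential operator whose coefficients involve $\nabla V_j$. Because the $V_j$'s are the trigonometric functions $1$, $\sin x_k$, $\cos x_k$ of \eqref{eq:cos-sin_bis}, iterated brackets generate all operators of the form $W\mapsto \varphi \cdot BW$ with $\varphi$ ranging over a dense subspace of $C^\infty(\mathbb{T}^d)$. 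The plan is then to realise these infinitesimal directions as small-time motions of \eqref{wave_vect} through the Agrachev-Sarychev saturation procedure, adapted to the $H^1\times L^2$ setting and the wave semigroup (this is where the regularity condition \eqref{def:q_rho} on $V$ is used). The hypothesis $|Z(W)|=0$ ensures that $\varphi\mapsto \varphi\cdot BW$ has dense image in $H^1\times L^2$, completing Part 1.

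\textbf{Step 2 (Part 2(a), matching $r(W_0)$ in dimensions $1,2$).} Given $W_0\neq 0$ and $\epsilon>0$, evolve freely (with $V=0$, $u\equiv 0$) on $[0,t_0]$ for some $t_0\in(r(W_0),r(W_0)+\epsilon)$, and apply Step 1 on $[t_0,r(W_0)+\epsilon]$. The key lemma to establish is a measure-theoretic unique-continuation statement: in dimensions $d\in\{1,2\}$, if $W_0\neq 0$, then $\mathcal{N}(W_0):=\{t\in\mathbb{R}:|Z(W(t;0,W_0))|>0\}$ has Lebesgue measure zero. The plan is to exploit the explicit Fourier expansion $W(t,x)=\sum_k \hat W_0(k)\, e^{\pm it\sqrt{1+|k|^2}+ik\cdot x}$, which is real-analytic in $t$ for each fixed $x$, and to combine this with a Fubini argument in $(t,x)\in\mathbb{R}\times\mathbb{T}^d$: a positive-measure vanishing set of $W(t,x)$ would force $t\mapsto W(t,x)\equiv 0$ for a.e.\ $x$, hence $W_0=0$. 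Any $t_0\notin\mathcal{N}(W_0)$ then plays the role required, and Proposition~\ref{Prop:propagation} supplies the matching lower bound.

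\textbf{Step 3 (Parts 2(b)--(c), finite time in higher dimensions).} The same reduction to Step 1 applies, but the preparation is cruder because in $d\geq 3$ the free Klein-Gordon evolution can exhibit Huygens-type lacunae and the lemma of Step 2 need not hold. For $d=3$, I would show that there exists a finite time $T^\ast$ at which $|Z(W(T^\ast;0,W_0))|=0$, possibly by inserting a short pulse of $u_0$ between two free propagations to break any residual vanishing structure. For $d\geq 4$ with $Z(W_0)$ closed, the plan is to cover $Z(W_0)$ by finitely many balls of controlled radius and propagate information from the open set $\mathbb{T}^d\setminus Z(W_0)$ into each ball by alternating free evolutions with short control pulses; this terminates in finite time because $\mathbb{T}^d\setminus Z(W_0)$ is open and nonempty, so information travels in at uniform speed from its boundary.

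\textbf{Main obstacle.} The most delicate ingredient is the measure-theoretic unique-continuation statement of Step 2 and its weaker substitutes in Step 3: ruling out that the freely evolved state has a positive-measure zero set on a non-negligible set of times is non-standard, and it is precisely where the dichotomy $d\leq 2$ versus $d\geq 3$ is forced by dispersion. A secondary difficulty lies in verifying that the Agrachev-Sarychev saturation machinery can be carried out at the low regularity $H^1\times L^2$ with multiplicative control fields composed with the rank-like operator $B$, uniformly in $d$ under the assumption \eqref{def:q_rho}.
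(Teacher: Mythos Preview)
There are two substantive gaps. In Step~1, the operators $\varphi B$ act as $(w,\dot w)\mapsto(0,\varphi w)$, so their flows $e^{\varphi B}:(w,\dot w)\mapsto(w,\dot w+\varphi w)$ only move the velocity component while leaving the profile $w$ fixed; the image of $\varphi\mapsto\varphi\cdot BW$ sits inside $\{0\}\times L^2$ and is never dense in $H^1\times L^2$, regardless of $|Z(W)|$. The paper closes this with a second, independent mechanism (Section~\ref{sec:vel->prof}): the operator $e^{aB^*}:(w,\dot w)\mapsto(w+a\dot w,\dot w)$ is shown to be small-time approximately reachable via the conjugation $e^{\log(\tau)F}\,e^{a\tau^2 A_V}\,e^{-\log(\tau)F}$ with $F=[B,A]=\operatorname{diag}(-1,1)$. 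Alternating $e^{\varphi B}$ (control the velocity) with $e^{B^*}$ (transfer velocity into profile) is what actually yields Part~1; your outline omits this entirely.

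In Step~2 your unique-continuation lemma is both unproven and false as stated: if $W_0$ is supported in a small ball, finite speed of propagation forces $|Z(W(t,\cdot))|>0$ on a whole interval of times, so $\mathcal N(W_0)$ is not Lebesgue-null; and at $H^1\times H^1$ regularity the map $t\mapsto W(t,x)$ is not real-analytic for fixed $x$, since the Fourier series need not even converge pointwise. The paper proceeds very differently. It first steers $W_0$ in small time to a state $(0,\varphi)$ with $\varphi\geq 0$ and $Z(\varphi)=Z(W_0)$ (Proposition~\ref{Prop:reach0varphi}, again using both $e^{\varphi B}$ and $e^{B^*}$), then applies the constant control $u=e_0$ to cancel the Klein--Gordon mass and run the plain wave equation. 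In $d\in\{1,2\}$ the D'Alembert/Poisson kernels are strictly positive inside the light cone, giving $w(t,\cdot)>0$ as soon as $t>r(\varphi)=r(W_0)$. In $d\geq 3$ one uses instead $w(t,x)=c_0(\varphi)\,t+O(1)$, with the $O(1)$ uniformly bounded provided $\varphi\in H^{d/2-1+}$; the closedness of $Z(W_0)$ is exactly what allows constructing such a regular $\varphi$ with the prescribed zero set (Proposition~\ref{Prop:F_phi}). Your Step~3 does not identify this mechanism or the role of the closedness hypothesis.
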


The regularity assumption on the initial condition $W_0 \in H^1\times H^1$ is stronger than the one expected $W_0 \in H^1\times L^2$ for technical reasons. Indeed, in our control strategy, the first step consists in adding the velocity into the profile (when the profile is vanishing on a region where the velocity is not) and this operation would not make sense if the velocity was less regular than the profile.

Statement \emph{1.} is a partial answer to \emph{Question 1}: the hypothesis $|Z(W_0)|=0$ implies\newline$\text{Int}\left( Z(W_0) \right) = \emptyset$ but it is strictly stronger. 

Statement \emph{2.(a)} gives the reciprocal of \eqref{implic} and answers \emph{Question 2}, in the low-dimensional case $d\in\{1,2\}$, for $V=0$. This is the first example of bilinear control for the wave equation where the minimal time for global approximate controllability can be determined.

Statements \emph{2.(b)} and \emph{2.(c)} correspond to the large time $H^1 \times L^2$-approximate controllability from $W_0$. Statement  \emph{2.(c)} applies in particular when $W_0$ is a continuous function on $\T^d$, which holds, for instance, when $W_0 \in H^{s} \times H^s (\T^d)$ with $s>\frac{d}{2}$.
We refer to \Cref{subsec:Z(W0)close} for a discussion about the closedness assumption on $Z(W_0)$.


\subsection{Detailed results}

Our first result concerns the small time controllability of the velocity.

\begin{theorem} \label{Prop:STAC_vel}
Let $W_0=(w_0,\dot{w}_0)\in H^1 \times  L^2(\T^d)$ and $\dot{w}_f \in L^2(\T^d)$. We assume that $\dot{w}_f-\dot{w}_0$ vanishes a.e. on $Z(w_0)$. Then the target $(w_0,\dot{w}_f)$ is small-time 
$H^1 \times L^2$-approximately reachable from $W_0$.
In particular, if $|Z(w_0)|=0$ then any target in $\{w_0\} \times  L^2(\T^d)$ is small-time $H^1 \times L^2$-approximately reachable from $W_0$.


\end{theorem}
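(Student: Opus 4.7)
The plan is to combine the nilpotent structure $B^2=0$ with iterated Lie brackets \`a la Agrachev-Sarychev. Writing the system as $\frac{dW}{dt} = LW + \sum_{j=0}^{2d} u_j(t) M_j W$ with $L := A + VB$ and $M_j := V_j B$, one checks that $M_j M_k = V_j V_k B^2 = 0$, so the operators $M_j$ pairwise commute and the Dyson expansion in the $M_j$'s truncates at order one. Concretely, with $u_j = \alpha_j/\tau$ on $[0,\tau]$ and $\tau \to 0$, a Duhamel estimate yields
\begin{equation*}
W(\tau) = W_0 + \sum_{j=0}^{2d} \alpha_j (0, V_j w_0) + O_{H^1 \times L^2}(\tau),
\end{equation*}
which shows that $(w_0, \dot w_0 + \sum_j \alpha_j V_j w_0)$ is small-time $H^1 \times L^2$-approximately reachable from $W_0$ for every $\alpha \in \R^{2d+1}$.

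\textbf{Bracket saturation.} To enlarge the set of reachable velocity perturbations, I would compute
\begin{equation*}
[A, V_k B] = \begin{pmatrix} V_k & 0 \\ 0 & -V_k \end{pmatrix}, \qquad [V_j B, [A, V_k B]] = 2 V_j V_k B,
\end{equation*}
so that iterating these Lie brackets produces $PB$ for every trigonometric polynomial $P$ on $\T^d$ (the first harmonics $1, \sin x_i, \cos x_i$ generate all trigonometric polynomials via product-to-sum formulas). Standard Agrachev-Sarychev convexification, realised here through piecewise-constant controls of large amplitude oscillating on sub-intervals of $[0,\tau]$, then makes each direction $(0, Pw_0)$ small-time approximately reachable from $W_0$.

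\textbf{Density and conclusion.} By orthogonality in $L^2(\T^d)$, the closure of $\{Pw_0 : P \text{ trig.\ polynomial}\}$ coincides with $\{f \in L^2 : f = 0 \text{ a.e.\ on } Z(w_0)\}$: any $h \in L^2$ orthogonal to every $Pw_0$ satisfies $hw_0 \perp P$ for all trigonometric polynomials, hence $hw_0=0$, which forces $h$ to be supported on $Z(w_0)$. Combining this density with the bracket saturation step and with the continuity of $W_0 \mapsto W(T;u,W_0)$ in $H^1 \times L^2$ (uniformly for small $T$ and piecewise-constant $u$ of controlled variation), we obtain approximate reachability of every $(w_0, \dot w_f)$ with $\dot w_f - \dot w_0 = 0$ a.e.\ on $Z(w_0)$. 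The special case $|Z(w_0)| = 0$ then trivially yields the full slice $\{w_0\} \times L^2(\T^d)$.

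\textbf{Main obstacle.} The technical heart will be the bracket saturation step: rigorously realising the iterated Lie brackets via piecewise-constant controls whose total duration shrinks to zero. One must construct sequences of oscillating, large-amplitude controls compatible with the $H^1 \times L^2$ well-posedness class, and prove that the associated states converge to the desired bracket flow uniformly as $\tau \to 0$---the Agrachev-Sarychev machinery in this infinite-dimensional hyperbolic setting requires particular care because the control amplitudes are unbounded even though the time window is arbitrarily short.
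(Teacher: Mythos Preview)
Your outline is correct and follows the same route as the paper: (i) reach $e^{\phi B}W_0=(w_0,\dot w_0+\phi w_0)$ for $\phi\in\mathrm{Span}\{V_0,\dots,V_{2d}\}$ via large constant controls on $[0,\tau]$; (ii) use the bracket $[fB,[fB,A_V]]=-2f^2B$ iteratively to reach $e^{\phi B}W_0$ for every trigonometric polynomial $\phi$; (iii) close by the density argument you give (which is exactly the paper's). Your first step, phrased as a truncated Dyson expansion, is the same as the paper's strong convergence $\exp(\tau(A_V+\tfrac{\phi}{\tau}B))\to\exp(\phi B)$; note that the truncation is really a property of the \emph{limit} $e^{\phi B}=I+\phi B$ (since $B^2=0$), not of the Dyson series for finite $\tau$, which still contains all mixed $L$--$M_j$ terms.

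The gap you correctly flag---the rigorous realisation of the iterated bracket---is precisely where the paper's work sits, and it is \emph{not} done by oscillating controls in the classical Agrachev--Sarychev style. Instead the paper uses an explicit conjugation identity: for $\phi\in W^{1,\infty}$,
\[
e^{-\frac{\phi}{\sqrt\tau}B}\,e^{\tau A_V}\,e^{\frac{\phi}{\sqrt\tau}B}
\;=\;\exp\bigl(\tau A+M_\tau\bigr),
\qquad
M_\tau=\begin{pmatrix}\sqrt\tau\,\phi & 0\\ \tau V-\phi^2 & -\sqrt\tau\,\phi\end{pmatrix},
\]
reducing the question to a perturbation of the generator. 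An abstract criterion (if $M_\tau\to M_0$ strongly on $H^1\times L^2$, $(M_\tau)$ is bounded in $W^{1,\infty}\times L^\rho\times L^\infty$, and $e^{tM_0}$ is bounded on $H^2\times H^1$, then $e^{\tau A+M_\tau}\to e^{M_0}$ strongly) is proved by Duhamel and Gronwall, yielding $e^{-\phi^2 B}$ as a strong limit of STAR operators. The saturation is then organised through subspaces $\mathcal H_\ell=\{\phi_0-\sum_i\phi_i^2:\phi_i\in\mathcal H_{\ell-1}\}$, shown to contain all trigonometric polynomials via product-to-sum formulas---exactly the mechanism you anticipate. So the missing ingredient in your sketch is this conjugation-to-generator trick, which replaces any delicate oscillatory construction by a clean strong-convergence estimate.
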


Our second result is the determination of some initial conditions from which System \eqref{wave_vect} is small-time $H^1 \times L^2$-approximately controllable. The last statement of \Cref{Prop:STAC} does not imply the first one; it corresponds to \Cref{Main}\emph{.1.}

\begin{theorem} \label{Prop:STAC}
$\mathcal{T}(W_0)=0$ for any initial condition
\begin{itemize}[topsep=0cm, parsep=0cm, itemsep=0cm]
\item $W_0=(w_0,\dot{w}_0) \in H^1 \times L^2(\T^d)$ such that $|Z(w_0)|=0$,
\item $W_0=(w_0,\dot{w}_0) \in H^1 \times H^1(\T^d)$ such that $|Z(W_0)|=0$.
\end{itemize}
\end{theorem}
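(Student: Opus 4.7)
My plan is to prove the two bullets separately: the first (hypothesis $|Z(w_0)|=0$) directly, by a three-stage control strategy iterating \Cref{Prop:STAC_vel}; the second ($W_0\in H^1\times H^1$, $|Z(W_0)|=0$) by reduction to the first after a short preliminary evolution. For the first bullet, given a target $W_f=(w_f,\dot w_f)\in H^1\times L^2$ and $\epsilon>0$, I would construct a control on $[0,\epsilon]$ in three stages of durations $\eta$, $\tau=\epsilon-2\eta$, $\eta$ with $\eta\ll\epsilon$: Stage~1 uses \Cref{Prop:STAC_vel} (permissible since $|Z(w_0)|=0$) to reach $(w_0,v_1)$ approximately, for some $v_1\in L^2$; Stage~2 evolves under $u\equiv 0$, carrying $(w_0,v_1)$ to $(w_1,\dot w_1)$; Stage~3 applies \Cref{Prop:STAC_vel} again to reset the velocity to $\dot w_f$, which is permissible once $|Z(w_1)|=0$. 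One designs $v_1$ so that $w_1\approx w_f$: in the simple case $V=0$, this reads $v_1=S(\tau)^{-1}(w_f-C(\tau)w_0)$ with $C(\tau)=\cos(\tau\sqrt{1-\Delta})$ and $S(\tau)=\sin(\tau\sqrt{1-\Delta})/\sqrt{1-\Delta}$, and a small generic perturbation of $v_1$ suffices to enforce $|Z(w_1)|=0$.

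The main obstacle in bullet~1 is that $S(\tau)^{-1}$ is unbounded on $L^2$: the Fourier multiplier $\lambda_k/\sin(\tau\lambda_k)$ with $\lambda_k=\sqrt{1+|k|^2}$ takes arbitrarily large values as $|k|\to\infty$. I would bypass this by a Fourier truncation: letting $P_N$ denote the orthogonal projection onto Fourier modes $|k|\leq N$, choose $N$ large enough that $\|(I-P_N)W_0\|_{H^1\times L^2}$ and $\|(I-P_N)W_f\|_{H^1\times L^2}$ are below $\epsilon$, pick $\tau$ avoiding the finite set of low-mode resonances $\tau\lambda_k\in\pi\mathbb{Z}$ with $|k|\leq N$, and take $v_1=S(\tau)^{-1}P_N(w_f-C(\tau)w_0)\in L^2$. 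Then $w_1=P_Nw_f+(I-P_N)C(\tau)w_0$ differs from $w_f$ by $O(\epsilon)$ in $H^1$ (using that $C(\tau)$ has operator norm $\leq 1$ on $H^1$), and the errors from the two approximate resets can be made arbitrarily small by invoking \Cref{Prop:STAC_vel} with sufficient precision. For general $V\in L^\rho$, the argument adapts by replacing $C(\tau),S(\tau)$ with the cosine/sine propagators of $A+VB$.

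For the second bullet, I would evolve with $u\equiv 0$ for a short time $t>0$ and show that $|Z(w(t))|=0$ for almost every small $t$, reducing to the first bullet applied to $W(t)$. Since $\dot w_0\in H^1$, one has $w(t)=w_0+t\dot w_0+r(t)$ with $\|r(t)\|_{H^1}=o(t)$. For a.e. $x\in\T^d\setminus Z(W_0)$, either $w_0(x)\neq 0$ (hence $w(t,x)\neq 0$ for small $t$) or $\dot w_0(x)\neq 0$ (hence $w(t,x)\approx t\dot w_0(x)\neq 0$ for small $t>0$). A Fubini argument applied to $E=\{(t,x)\in(0,\tau)\times\T^d:w(t,x)=0\}$ then yields $|E_t|=0$ for a.e. $t\in(0,\tau)$; the delicate point here, especially for $d\geq 2$ where $w(t)$ is not pointwise defined, is to make the slice-by-slice vanishing rigorous using a good representative of $w$. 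At any such $t$, the first bullet applied to $W(t)$ gives $\mathcal{T}(W(t))=0$, whence $\mathcal{T}(W_0)\leq t$ for every small $t$, so $\mathcal{T}(W_0)=0$.
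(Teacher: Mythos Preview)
Your strategy bypasses the paper's central tool, the small-time transfer $e^{B^*}$ of \Cref{Prop:STAR_fin}, replacing it by the free flow $e^{\tau A_V}$ in both bullets. For the first bullet the paper simply does
\[
(w_0,\dot w_0)
\xrightarrow{\text{\Cref{Prop:STAC_vel}}}
(w_0,\,w_f-w_0)
\xrightarrow{e^{B^*}}
(w_f,\,w_f-w_0)
\xrightarrow{\text{\Cref{Prop:STAC_vel}}}
(w_f,\dot w_f),
\]
assuming (by density) $|Z(w_f)|=0$; no propagator is inverted and nothing special about $V$ is used beyond what \Cref{Prop:STAC_vel} and \Cref{Prop:STAR_fin} already require. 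Your free-flow scheme is correct for $V=0$, but the one-line claim that it ``adapts'' to $V\in L^\rho$ is unsupported: $C_V(\tau),S_V(\tau)$ are not Fourier multipliers and do not commute with $P_N$, so making sense of $S_V(\tau)^{-1}$ on a truncation requires the spectral resolution of $-\Delta+1-V$, a nontrivial detour you do not carry out.

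For the second bullet the Fubini argument has a genuine gap, beyond the representative issue you flag. You only obtain that for a.e.\ $x\notin Z(W_0)$ there is $\delta(x)>0$ with $w(t,x)\neq 0$ on $(0,\delta(x))$; this gives $E^x\cap(0,\delta(x))=\emptyset$, not $|E^x|=0$. For $H^1\times H^1$ data the map $t\mapsto w(t,x)$ is at best $C^1$, not analytic, and nothing forbids a positive-measure zero set in $(\delta(x),\tau)$. The most one can extract from your argument is $|Z(w(t))|\to 0$ as $t\to 0^+$, which is insufficient to invoke the first bullet. The paper avoids this entirely via \Cref{Lem:w0+aw0.}: applying $e^{aB^*}$ yields the \emph{exact} profile $w_0+a\dot w_0$ (no remainder), and the sets $Z(w_0+a\dot w_0)\setminus Z(W_0)$ are pairwise disjoint over $a\in\R$, forcing $|Z(w_0+a\dot w_0)|=|Z(W_0)|=0$ for all but countably many $a>0$. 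This disjointness is precisely what the remainder $r(t)$ in $w(t)=w_0+t\dot w_0+r(t)$ destroys.
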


Our third result concerns the particular case $d \in \{1,2\}$ and $V=0$. The last statement of \Cref{Prop:Tmin} corresponds to \Cref{Main}\emph{.2.(a).}

\begin{theorem} \label{Prop:Tmin}
We assume $d \in \{1,2\}$ and $V =0$.  
Then $\mathcal{T}(W_0)=r(W_0)$ for any non zero initial condition
\begin{itemize}[topsep=0cm, parsep=0cm, itemsep=0cm]
\item $W_0=(0,\dot{w}_0) \in H^1 \times L^2(\T^d)$ such that 
$\dot{w}_0$ is $\geq 0$  (resp $\leq 0$) a.e. on $\T^d$.
\item $W_0=(w_0,\dot{w}_0) \in H^1 \times L^2(\T^d)$ such that $Z(w_0)=Z(W_0)$.
\item $W_0=(w_0,\dot{w}_0) \in H^1 \times H^1(\T^d)$.
\end{itemize}
\end{theorem}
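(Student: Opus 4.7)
The lower bound $\mathcal{T}(W_0)\geq r(W_0)$ is immediate from \Cref{Prop:propagation}. For the reverse inequality, my strategy in each of the three items is to steer $W_0$, in time $r(W_0)+\epsilon/2$, to a state $W_*$ whose first component satisfies $|Z(w_*)|=0$, and then conclude in the remaining $\epsilon/2$ via \Cref{Prop:STAC}. The key device enabling the propagation phase is that choosing the constant controls $u_0\equiv 1$ and $u_1,\dots,u_{2d}\equiv 0$ cancels the mass term $-1$ against $u_0V_0=1$; since $V=0$, the system reduces to the free wave equation $\partial_t^2 w=\Delta w$, whose fundamental solution has a non-negative kernel in dimensions $d\in\{1,2\}$ (d'Alembert in 1D, Poisson in 2D). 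This positivity is the essential reason for the restriction $d\in\{1,2\}$ in the statement.

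For the first item I would run this free-wave mode on $[0,T]$ with $T=r(W_0)+\epsilon/2$. The resulting solution is $w(T,x)=\tfrac12\int_{x-T}^{x+T}\dot{w}_0(y)\,dy$ in 1D and the analogous Poisson integral in 2D; under the sign assumption on $\dot{w}_0$ this integral has definite sign and vanishes iff $B(x,T)\cap\operatorname{supp}(\dot{w}_0)=\emptyset$, equivalently $B(x,T)\subset Z(W_0)$, which is impossible for $T>r(W_0)$. Hence $|Z(w(T))|=0$ and \Cref{Prop:STAC} finishes.

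For the second item, the assumption $Z(w_0)=Z(W_0)$ forces $\dot{w}_0=0$ a.e.\ on $Z(w_0)$. For a large constant $C>0$ to be fixed below, $C|w_0|-\dot{w}_0$ vanishes a.e.\ on $Z(w_0)$, so \Cref{Prop:STAC_vel} steers $W_0$ in arbitrarily small time approximately to $(w_0,C|w_0|)$. Free-wave propagation for time $T=r(W_0)+\epsilon/4$ then yields in 1D
\[
w(T,x)\;=\;\tfrac12\bigl[w_0(x+T)+w_0(x-T)\bigr]\;+\;\tfrac{C}{2}\int_{x-T}^{x+T}|w_0(y)|\,dy,
\]
with an analogous formula in 2D. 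Since $T>r(W_0)=r(w_0)$ every ball $B(x,T)\subset\T^d$ meets $\operatorname{supp}(w_0)$, and a compactness/continuity argument yields a uniform bound $\int_{B(x,T)}|w_0|\,dy\geq c>0$; taking $C$ large enough, the integral term dominates the boundary contribution, so $w(T,\cdot)>0$ everywhere and \Cref{Prop:STAC} applies.

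For the third item I would reduce to the second via a preliminary small-time step that ``adds velocity to the profile''. Specifically, for all but countably many $\tau>0$ the profile $w_\tau:=w_0+\tau\dot{w}_0$ satisfies $Z(w_\tau)=Z(w_0)\cap Z(\dot{w}_0)=Z(W_0)$: indeed, any $x$ in $Z(w_\tau)\setminus Z(W_0)$ must have $\dot{w}_0(x)\neq 0$ and $-w_0(x)/\dot{w}_0(x)=\tau$, and the level sets $\{-w_0/\dot{w}_0=\tau\}$ can have positive measure for at most countably many $\tau$. Steering $W_0$ approximately to $(w_\tau,\dot{w}_0)$ in arbitrarily small time --- an Agrachev--Sarychev-type manoeuvre in the same spirit as \Cref{Prop:STAC_vel} but acting on the profile rather than the velocity, for which the assumption $\dot{w}_0\in H^1$ is needed to ensure $w_\tau\in H^1$ (whence the $H^1\times H^1$ hypothesis) --- reduces the problem to the setting of the second item. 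The principal obstacle I anticipate is implementing this profile-modification step: unlike \Cref{Prop:STAC_vel}, the profile itself must be altered in arbitrarily small time, which will require a distinct Lie-bracket computation and density argument adapted to this new direction.
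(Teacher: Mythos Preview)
Your treatments of the first and third items align with the paper: free-wave propagation under $u\equiv e_0$ is exactly \Cref{Lem:signe}, and the profile-modification manoeuvre you anticipate is \Cref{Prop:STAR_fin} (the operator $e^{aB^*}$), proved in Section~\ref{sec:vel->prof} and combined with the measure-theoretic choice of $a$ in \Cref{Lem:w0+aw0.}; you are right that this is the principal new Lie-bracket ingredient beyond \Cref{Prop:STAC_vel}.

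For the second item you take a different route from the paper, and it carries a gap in dimension $d=2$. The paper does not keep the profile $w_0$: via the four-step chain of \Cref{Prop:reach0varphi} (which itself uses $e^{B^*}$ twice) it first reaches $(0,-|w_0|)$ in small time and then reduces cleanly to the first item, so that only the velocity term enters Poisson's formula. Your shortcut---reach $(w_0,C|w_0|)$ using \Cref{Prop:STAC_vel} alone, then propagate---does work in $d=1$, where $w_0\in H^1(\T)\subset L^\infty(\T)$ and the d'Alembert boundary term $\tfrac12[w_0(x+T)+w_0(x-T)]$ is uniformly bounded and can be dominated by choosing $C$ large. In $d=2$, however, the ``analogous formula'' is not analogous: Poisson's formula for data $(w_0,0)$ involves $\nabla w_0$, and for $w_0$ merely in $H^1(\T^2)\not\subset L^\infty(\T^2)$ the resulting profile contribution $p(T,\cdot)$ is only an $H^1$ function with no uniform lower bound, so no single $C$ forces $p(T,\cdot)+Cv(T,\cdot)>0$ everywhere. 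Your approach is repairable: since the velocity contribution $v(T,x)=\frac{1}{2\pi}\int_{B(x,T)}\frac{|w_0|(y)}{\sqrt{T^2-|y-x|^2}}\,dy$ is strictly positive for every $x$ (this is precisely \Cref{Lem:signe} applied to $\dot w_0=|w_0|$), the same level-set countability argument you use in item~3 shows that for all but countably many $C>0$ the zero set of $p(T,\cdot)+C\,v(T,\cdot)$ has measure zero. But as written the case $d=2$ is not established.
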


Our fourth result concerns the $H^1 \times L^2$- approximate controllability in large time. The last two statements of \Cref{Prop:large_time} corresponds to \Cref{Main}\emph{.2.(b)} and \emph{.2.(c)} 

\begin{theorem} \label{Prop:large_time}
We assume $d \geq 3$ and $V=0$. Then $\mathcal{T}(W_0)$ is finite for any non zero initial condition
\begin{itemize}[topsep=0cm, parsep=0cm, itemsep=0cm]
\item $W_0=(0,\dot{w}_0)$ where 
$\dot{w}_0 \in H^{\frac{d}{2}-1+}(\T^d)$ is $\geq 0$ (resp. $\leq 0$) a.e. on $\T^d$,
\item $W_0=(w_0,\dot{w}_0) \in H^1 \times L^2(\T^d)$ such that $Z(w_0)=Z(W_0)$ is closed,
\item $W_0=(w_0,\dot{w}_0) \in H^1 \times H^1(\T^d)$ such that $Z(W_0)$ is closed.
\end{itemize}
When $d=3$, the last 2 statements hold without the closedness assumption on $Z(W_0)$.
\end{theorem}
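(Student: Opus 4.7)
The plan is to reduce all three bullets of \Cref{Prop:large_time} to \Cref{Prop:STAC}: we steer $W_0$ in finite time $T>0$ to a state $W_1$ with $|Z(w_1)|=0$ (bullets 1 and 2) or $|Z(W_1)|=0$ (bullet 3), then apply \Cref{Prop:STAC} from $W_1$ to conclude $\mathcal{T}(W_0)\leq T<\infty$.

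For the first bullet, $W_0=(0,\dot w_0)$ with $\dot w_0\geq 0$ and $\dot w_0\not\equiv 0$, I take $u\equiv 0$. The free Klein-Gordon solution $w(t)=\frac{\sin(t\sqrt{1-\Delta})}{\sqrt{1-\Delta}}\dot w_0$ lies in $H^{d/2+}\hookrightarrow C^0(\T^d)$ under the regularity $\dot w_0\in H^{d/2-1+}$, and its spatial mean $\int_{\T^d}w(t)=\sin(t)\int_{\T^d}\dot w_0$ is strictly positive on $(0,\pi)$, so $w(t,\cdot)\not\equiv 0$ there. The crux is to exhibit $t_0\in(0,\pi)$ with $|Z(w(t_0))|=0$. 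I combine (i) the real-analyticity of $t\mapsto w(t,x)$ for each fixed $x$ (each Fourier mode is trigonometric in $t$); (ii) unique continuation for Klein-Gordon, forbidding $w$ from vanishing on any open space-time set unless $w\equiv 0$; and (iii) the structure of the propagator to rule out that $\{x:w(\cdot,x)\equiv 0\}$ has positive measure. Combined with Fubini this yields $|Z(w(t))|=0$ for a.e. $t\in(0,\pi)$, and I invoke \Cref{Prop:STAC} (first bullet) at such a $t_0$.

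For the second bullet, I first apply \Cref{Prop:STAC_vel} in arbitrarily small time to replace $\dot w_0$ by $\dot w_0+\chi$ where $\chi\in C_c^\infty(\T^d\setminus Z(w_0))$ is nonnegative and nontrivial; this is admissible since the hypothesis $Z(w_0)=Z(W_0)$ gives $\dot w_0=0$ a.e. on $Z(w_0)$, and the closedness of $Z(w_0)$ makes $\T^d\setminus Z(w_0)$ open, providing room for such $\chi$. I then freely evolve and replay the analyticity + unique continuation + Fubini argument above to reach $|Z(w(t_0))|=0$, concluding via \Cref{Prop:STAC} (first bullet). The third bullet is identical in structure but aims at $|Z(W(t_0))|=0$; the extra regularity $\dot w_0\in H^1$ is preserved by the free evolution, so $W(t_0)\in H^1\times H^1$ and \Cref{Prop:STAC} (second bullet) applies.

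The special case $d=3$ follows the same plan, but now activating $u_0\equiv 1$ converts Klein-Gordon into the pure wave equation on $\T^3$, whose Kirchhoff kernel is the (periodized) nonnegative surface measure $\frac{\delta(t-|x|)}{4\pi t}$. Evolution from a nonnegative velocity stays nonnegative, and for $t$ large enough the spheres of radius $t$ centered at any $x\in\T^3$ uniformly cover $\T^3$ (modulo periodic wrapping), so that $w(t)$ is pointwise positive; this dispenses with the closedness hypothesis in the last two bullets for $d=3$. The main obstacle throughout is the measure-theoretic step ``$|Z(w(t_0))|=0$ for some $t_0$'': empty interior coming from unique continuation does not by itself imply null Lebesgue measure, so the argument must exploit analyticity in time and the fine structure of the Klein-Gordon propagator; the closedness assumption for $d\geq 4$ is what allows the perturbation $\chi$ to be chosen compactly supported in the open complement of $Z(W_0)$, rendering the subsequent propagation and unique continuation steps geometrically clean.
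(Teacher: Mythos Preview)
Your argument for the first bullet has a genuine gap: the map $t\mapsto w(t,x)$ is \emph{not} real analytic for general $\dot w_0\in H^s$. Each Fourier mode $c_n(\dot w_0)\frac{\sin(\langle n\rangle t)}{\langle n\rangle}e^{in\cdot x}$ is entire in $t$, but the sum extends holomorphically to a strip $|\mathrm{Im}\,t|<\epsilon$ only if $\sum_n |c_n(\dot w_0)|\,e^{\langle n\rangle\epsilon}/\langle n\rangle<\infty$, which forces exponential decay of the Fourier coefficients, i.e.\ real-analytic data. With merely $H^{d/2-1+}$ data you cannot invoke analyticity in $t$, and then the Fubini step collapses: you yourself note that unique continuation only yields empty interior of the zero set, not null Lebesgue measure. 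The same gap propagates to bullets 2 and 3, which rely on ``replaying'' this argument.

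The paper avoids all of this by exploiting the control $u_0$: setting $u\equiv e_0$ cancels the mass term and yields the pure wave evolution $e^{t\widetilde A}W_0$. For $W_0=(0,\dot w_0)$ with $\dot w_0\geq 0$, the zero Fourier mode of $w(t,\cdot)$ equals $c_0(\dot w_0)\,t$, growing linearly, while the remaining modes are uniformly bounded by $M=\sum_{n\neq 0}|c_n(\dot w_0)|/|n|\leq C\|\dot w_0\|_{H^s}$ (finite precisely because $s>d/2-1$). Hence $w(t,x)\geq c_0(\dot w_0)\,t-M>0$ once $t>M/c_0(\dot w_0)$, giving $|Z(w(t))|=0$ directly (\Cref{Prop:profile>0}). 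For bullets 2 and 3 the paper first steers in small time to a state $(0,\varphi)$ with $\varphi\geq 0$ and $Z(\varphi)=Z(W_0)$ (\Cref{Prop:reach0varphi} together with \Cref{Lem:w0+aw0.}); when $d\geq 4$ one takes $\varphi\in C^\infty$ via \Cref{Prop:F_phi} (this is where closedness of $Z(W_0)$ enters), whereas for $d=3$ the choice $\varphi=|w_0|\in H^1\subset H^{d/2-1+}$ works with no closedness hypothesis. Your $\chi$-perturbation never produces such a prepared state, and your $d=3$ Kirchhoff claim that periodized spheres ``uniformly cover $\T^3$'' is false as stated (spheres are null sets); even interpreted as equidistribution it would need nontrivial justification and does not apply to evolutions starting from states with nonzero position $w_0$.
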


\subsection{Proof strategy}

\subsubsection{Approximating flows along commutators}

A fundamental idea in control theory is the following:
when trajectories can approximate the flows generated by certain operators (as $A_V:=A+VB$ and $V_j B$) then they could also approximate those of other operators, given by (well defined) elements of ${\rm Lie}\{A_V,V_jB\}$.
In this article, we use the following commutators: for $f \in L^{\infty}(\T^d)$,
\begin{equation}\label{eq:commutator1}
[f B,[f B,A_V]]=-2f^2B,
\end{equation}
\begin{equation}\label{eq:commutator2}
\sum_{j=0}^\infty
\frac{1}{j!}
{\rm ad}^j_{\log(t)F}(tA_V) \xrightarrow[t\to 0] {} B^*, \qquad F:=[B,A_V]=BA_V-A_VB.
\end{equation}
Here, $B^*$ denotes the transpose matrix of $B$ (see \eqref{def:B}). For example, when $f$ is one of the control potentials $V_j$, then $f^2$ is linearly independent from $f$, thus  \eqref{eq:commutator1} provides a new generator.
Iterating this procedure allows to 
approximate $\exp(\phi B)$ for any function $\phi$ in $L^\rho(\T^d)$
\begin{equation}\label{eq:flow-of-B}
e^{\phi B}\begin{pmatrix}
    w_0 \\ \dot{w}_0
\end{pmatrix}
=
 \begin{pmatrix}
    w_0 \\ \dot{w}_0+\phi w_0
\end{pmatrix}.
\end{equation}
In this way, we realize the instantaneous transitions
\begin{equation} \label{exp(phiB)_target}
    \begin{pmatrix}
    w_0 \\ \dot{w}_0
\end{pmatrix}
\quad \xrightarrow[]{\quad e^{\phi B} \quad }  \quad
 \begin{pmatrix}
    w_0 \\ \dot{w}_0+\phi w_0
\end{pmatrix}
\end{equation}
that change the velocity while maintaining the profile. This argument is the main idea behind the control of the velocity, stated in Theorem \ref{Prop:STAC_vel}.
The constraint on the zeroes of the velocity $Z(w_0) \subset Z(\dot{w}_f-\dot{w}_0)$ appears naturally, as the profile $w_0$ multiplies $\phi$. For more details, we refer to the Appendix \ref{appendix:brackets}, Propositions \ref{Cor:strongCV} and \ref{Prop:H1*H1}.

\medskip 

Applying instead the flow generated by $B^*$, obtained in \eqref{eq:commutator2}, gives 
\begin{equation}\label{eq:flow-of-B*}
e^{B^*}\begin{pmatrix}
    w_0 \\ \dot{w}_0
\end{pmatrix}=
 \begin{pmatrix}
    w_0+\dot{w}_0 \\ \dot{w}_0
\end{pmatrix},
\end{equation}
In this way, we prove the instantaneous transition
\begin{equation} \label{exp(aB*)_target}
\begin{pmatrix}
    w_0 \\ \dot{w}_0
\end{pmatrix}
\quad \xrightarrow[]{\quad e^{B^*} \quad } \quad
 \begin{pmatrix}
    w_0 +   \dot{w}_{0} \\ \dot{w}_0
\end{pmatrix}.
\end{equation}
that change the profile while maintaining the velocity. This strategy allows to control the profile (see Section \ref{sec:vel->prof}), once the control on the velocity is established. 

\medskip

In this article, the main difficulties concerning this programme are:
\begin{itemize}[topsep=0cm, parsep=0cm, itemsep=0cm]
    \item the control system \eqref{wave_vect} is infinite-dimensional thus we must be vigilant about the norms for which convergences (such as \eqref{eq:commutator2}) hold, and whether they can be concatenated,
    \item the flows of $A_V$ can be used only forward in time, since it is a drift, and on time intervals as small as possible, to keep track of the minimum time.
\end{itemize}
More in general, the problem of understanding (the flow of) which commutators can be approximated by the dynamics of a control system with drift is a complex question, at the root of the nonlinear control theory of ODEs and PDEs.

\subsubsection{Propagation from well-prepared states and minimum time}

The proofs of \Cref{Prop:STAC} and \Cref{Prop:Tmin} are combinations of 2 elementary blocks:
\begin{itemize}[topsep=0cm, parsep=0cm, itemsep=0cm]
\item the small-time controllability of the velocity 
(\Cref{Prop:STAC_vel}) i.e. instantaneous transitions
$$\begin{pmatrix}
    w_0 \\ \dot{w}_0
\end{pmatrix}
\quad 
\xrightarrow[]{Z(w_0) \subset Z(\dot{w}_f-\dot{w}_0)} 
\quad
 \begin{pmatrix}
    w_0 \\ \dot{w}_f
\end{pmatrix}$$
\item the small time transfer of the velocity into the profile i.e. the instantaneous transition \eqref{exp(aB*)_target}.
\end{itemize}
For instance, we prove the first statement of \Cref{Prop:STAC} via the following 3 transitions
$$\begin{pmatrix}
    w_0 \\ \dot{w}_0
\end{pmatrix}
\quad \xrightarrow[]{\quad |Z(w_0)|=0 \quad } \quad
\begin{pmatrix}
    w_0 \\ w_f-w_0
\end{pmatrix}
\quad \xrightarrow[]{\quad e^{B^*} \quad } \quad
\begin{pmatrix}
    w_f \\ w_f-w_0
\end{pmatrix}
\quad \xrightarrow[]{\quad |Z(w_f)|=0 \quad} \quad
\begin{pmatrix}
    w_f \\ \dot{w_f}
\end{pmatrix}$$
The first transition corresponds to small-time control of the velocity because $|Z(w_0)|=0$ and so does the third one, because one may assume $|Z(w_f)|=0$. The second transition is a small-time transfer of the velocity into the profile: $w_f=w_0+(w_f-w_0)$.

These instantaneous transitions do not modify the zeroes set $Z(W)$. By \Cref{Prop:propagation}, the modification of the zeroes set requires time. The situation is perfectly clear in the specific case of a (non vanishing) initial condition $W_0=(0,\phi)$ with $\phi\geq 0$. Indeed, the free evolution of the wave equation produces in finite time a strictly positive profile, thus satisfying $|Z(W)|=0$, to which \Cref{Prop:STAC} can be applied. For instance, in dimension $d=1$, the D'Alembert formula
$$w(t,x)=\frac{1}{2}\int_{x-t}^{x+t}\phi(y)dy$$
proves that for every $t>r(W_0)=r(\phi)$ then $w(t,\cdot)>0$. 
A similar argument captures the minimal time in dimension $d=2$. In dimension $d \geq 3$, we only prove the positivity of the profile in sufficiently large time.
 
Finally, to prove the second point of \Cref{Prop:Tmin} or \Cref{Prop:large_time}, with a general initial condition $W_0=(w_0,\dot{w}_0)$, it suffices to prove that it can instantaneously reach a target $(0,\phi)$ with $\phi \geq 0$ and $Z(\phi)=Z(w_0)$. This is achieved with the following transitions
$$
 \begin{pmatrix}
    w_0 \\ \dot{w}_0
\end{pmatrix}
\quad \xrightarrow[]{\quad} \quad
\begin{pmatrix}
    w_0 \\ - \phi - w_0
\end{pmatrix}
\quad \xrightarrow[]{e^{B^*}} \quad
\begin{pmatrix}
    -\phi \\ -\phi - w_0
\end{pmatrix} 
\quad \xrightarrow[]{\quad} \quad 
\begin{pmatrix}
    -\phi \\ \phi
\end{pmatrix}
\quad \xrightarrow[]{e^{B^*}} \quad 
\begin{pmatrix}
    0 \\ \phi
\end{pmatrix},
$$
the 1st and 3rd ones are justified by 
$Z(w_0) \subset Z(\dot{w}_0+\phi+w_0)$ and 
$Z(\phi) \subset Z(2\phi+w_0)$.

\subsection{Bibliographical comments}

\subsubsection{Linear control of wave equations}

Intuitively, one can draw a parallel between bilinear control problems of the form $\partial_t^2 w - \Delta w + u w = 0$ and linear control problems of the form $\partial_t^2 w - \Delta w = \theta u$, where $\theta$ is a function that determines the control region. There is a vast amount of literature on the linear control of wave equations, and we restrict ourselves to citing only a few references. When $\theta$ is continuous and time-independent, the control region $\{\theta \neq 0\}$ is open, and the celebrated condition of Bardos--Lebeau--Rauch is known to be necessary and sufficient for controllability (see \cite{BLR, BurqGerard97}). Several extensions have been studied, such as the case of a subelliptic operator in \cite{Letrouit2022,Letrouit2023}, or the case of a rough boundary in \cite{BurqLeRousseauDehman1,BurqLeRousseauDehman2}.
Earlier results based on the multiplier method also provide controllability results in certain geometries. Some authors have addressed the case of a time-dependant open control region (see, for instance, \cite{LeRousseau17}). 
    
Recent works (see \cite{Burq2020, Rouveyrol2024}) have considered the related problem of stabilization with a damping function that is not continuous. In that setting, a different geometric condition arises; however, the damping is assumed to act in a region that is a union of polyhedra and, in particular, that has nonempty interior.
    
Less is known about the controllability of the wave equation when the control region is merely measurable. The dual question of observability has been studied in \cite{Humbert2019}, where it is shown that observability can be obtained under a geometric condition involving the average time spent by Riemannian geodesics in the observation domain. See also the very recent article \cite{niu2025symmetrywaveequationtorus} which studies the controllability of the 1D wave equation with a measurable space-time control region. In contrast, controllability of the heat equation with a rough control region has been studied by several authors; see, for instance, \cite{BurqMoyano2022} and the references therein. Finally, control problems for nonlinear wave equations has been studied in various situations; see, for instance, \cite{Coron2006, DehmanLebeau, Fattorini, Lasiecka05, Lasiecka1991, Perrin2026, TonBui, Zhou}.

\subsubsection{Exact bilinear control}

Let $s \in \mathbb{N}^*$. Since the potentials $V_j$ (see \eqref{eq:cos-sin_bis}) define bounded operators on $H^{s} \times H^{s-1}$, then System \eqref{wave_vect} is not exactly controllable in $H^{s} \times H^{s-1}(\T^d)$ with controls in $L^{1}_{loc}(\R^+,\R)$ (see \cite{BMS} for controls in $L^{1^+}_{loc}$ and \cite{chambrion-laurent} for controls in $L^{1}_{loc}$). 

The local exact controllability around $(w,\dot{w})=(1,0)$ is proved in a different functional framework in \cite{beauchard-laurent, beauchard-wave} when $d=1$, the control $u$ is scalar and the associated potential is appropriate. With such small controls, the minimal time is $T=2\pi$ (in this article no bounds are imposed on controls thus the minimal time can be smaller).

The proof uses the linear test and the solution of a trigonometric moment problem whose frequencies are the roots of the eigenvalues of the Laplacian. In the multi-dimensional case, this strategy fails because of the repartition of these eigenvalues (see 
\cite{haraux-jaffard}).

\subsubsection{Approximate bilinear control}

For $d=1$ and only one control in front of $V_0$, the system is $H^1\times L^2$-approximately controllable  in time $\geq 2\pi$ from initial states whose Fourier modes are all active \cite{BMS}.
The proof, relying on Fourier series, is limited to diagonal systems. Of course, this result is valid for our system \eqref{wave_vect} but the time $2\pi$ is not sharp (in fact, the minimal time depends on the initial condition)

Some large-time approximate controllability properties (towards particular final data) were also previously established for 1D non-linear damped wave equations with bilinear multiplicative control in \cite{khapalov-wave1} (see also \cite{khapalov-wave2}), via controlling a finite number of modes individually and subsequently in time (we notice that in those articles the control depends on both time and space variables).

In \cite{pozzoli} by the third author, small-time approximate controllability from initial states supported on a finite number of modes was established (hence, a sort of complementary result w.r.t. \cite{BMS}). There, the proof technique was also inspired by a Lie bracket approach, in particular exploiting commutators of the form \eqref{eq:commutator1}. Here, we develop further such Lie bracket approach and prove more general results.

\subsubsection{Agrachev-Sarychev method}
A part of our proof consists in showing that the flows along any potential $\mathcal{V}(x)\in L^\rho(\T^d)$ can be approximated (in small times) by the control system \eqref{eq:wave_bis} (more precisely, for any initial state $W_0\in H^1\times L^2(\T^d,\R)$, the orbit $\{e^{s\mathcal{V}B}W_0\}_{s\in \R}$ at $W_0$ along $\
\mathcal{V}B$, is small-time approximately reachable from $W_0$ for its vectorised version \eqref{wave_vect}). The proof of such property is based on a low-modes/degenerate forcing argument (generated by the commutators of the finite family of control potentials $V_0B,\dots,V_{2d}B
$ with the drift Laplacian $A+VB$) \emph{à la Agrachev-Sarychev} \cite{agrachev-sarychev,agrachev2}, firstly developed for the linear control of Navier-Stokes systems. As the space-dependent part of the control potential in \eqref{eq:wave_bis} is supported only on a finite number of low Fourier modes, it is localized in the frequency space, and such control is often called degenerate in control theory of PDEs \cite{rissel-nersesyan}. Such strategy was recently adapted to the bilinear control setting in \cite{duca-nersesyan} for controlling Schrödinger equations, and we refer also to \cite{coron-xiang-zhang,beauchard-pozzoli2} for additional recent applications of such strategies.

\subsection{Structure of the article}

In \Cref{sec:WP}, we recall classical well-posedness results concerning  equation \eqref{wave_vect} together with preliminary results. In \Cref{sec:CV}, we prove an abstract criterium for strong convergence that will be used several times in this article. In \Cref{sec:STAC_vel}, we prove the small-time controllability of the velocity, i.e. \Cref{Prop:STAC_vel}. 
In \Cref{sec:vel->prof}, we prove the small-time transfer of the velocity into the profile i.e. transitions \eqref{exp(aB*)_target}. In \Cref{sec:STAC}, we prove the small-time approximate controllability results, i.e. \Cref{Prop:STAC}. In \Cref{sec:Tmin}, we prove the approximate controllability results in optimal time, i.e. \Cref{Prop:Tmin}. In \Cref{sec:Large_time}, we prove the controllability in large enough time, i.e. \Cref{Prop:large_time}. In \Cref{sec:Discussion}, we discuss the assumptions and generalisations of these results.

\section{Well-posedness and preliminaries}
\label{sec:WP}

\subsection{Explicit group of isometries}

In this section, we recall classical properties of the group associated with the wave equation.

\begin{definition}
We define
\begin{itemize}[topsep=0cm, parsep=0cm, itemsep=0cm]
%
%
\item the norm $\|\cdot\|$ on $H^1 \times L^2(\T^d)$: for $W=(w_1,w_2) \in H^1 \times L^2(\T^d)$,
\begin{equation} \label{normalpha}
\|W\| = 
\left( \int_{\T^d} 
\left( 
|\nabla w_1 |^2  + 
|w_1|^2 + 
|w_2|^2  \right)  \right)^{\frac 12},
\end{equation}

\item the associated norm $\|\cdot\|$ for bounded operators on $H^1 \times L^2$:
for $L \in \mathcal{L}(H^1 \times L^2)$,
\begin{equation} \label{opnormalpha}
    \| L\|:=\sup\{ \|L(W)\| ; W \in H^1 \times L^2(\T^d), \|W\|=1 \},
\end{equation}
\item the unbounded operator 
\begin{equation} \label{def:Aalpha}
D(A)=H^2 \times H^1(\T^d), \qquad A = \begin{pmatrix}
    0 & 1 \\ \Delta-1 & 0 
\end{pmatrix}.
\end{equation}
\end{itemize}
\end{definition}

\begin{definition}
The Fourier coefficients of a function $f \in L^1(\T^d)$ are defined, for every $k \in \mathbb{Z}^d$ by
$$
c_k(f)=\frac{1}{|\T^d|} \int_{\T^d} f(x) e^{-i k \cdot x} dx. $$
\end{definition}

\begin{proposition} \label{Lem:A}
The unbounded operator $A$ generates a group of isometries of 
$\left( H^1 \times L^2(\T^d) , \|.\| \right)$: for $W_0=(w_0,\dot{w}_0) \in H^1 \times L^2(\T^d)$ and $t \in \R$ then $e^{tA}W_0 = (w(t),\dot{w}(t))$ where
\begin{equation} \label{SG:w1w2}
\begin{aligned}
w(t,x) & =\sum_{n\in\mathbb{Z}^d} \left( c_n(w_0) \cos(\langle n \rangle t) + c_n(\dot{w}_0) \frac{\sin(\langle n \rangle t)}{\langle n \rangle}  \right) e^{inx},
\\
\dot{w}(t,x) & =\sum_{n\in\mathbb{Z}^d} \Big( - \langle n \rangle c_n(w_0) \sin(\langle n \rangle t) + c_n(\dot{w}_0) \cos(\langle n \rangle t) \Big) e^{inx},
\\
\langle n \rangle & = \sqrt{1+|n|^2} .
\end{aligned}
\end{equation}
Moreover, the map $W:t \in \R \mapsto e^{tA} W_0$ belongs to $C^0(\R,H^1\times L^2(\T^d)) \cap C^1(\R,L^2\times H^{-1}(\T^d))$, satisfies $W(0)=W_0$ and the following equality in $L^2 \times H^{-1}(\T^d)$ for every $t \in \R$ 
$$ \frac{d W}{dt} (t) = A W(t). $$
\end{proposition}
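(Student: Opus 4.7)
The plan is to diagonalise $A$ via the Fourier basis of $\T^d$ and verify the stated properties mode by mode, then reassemble.

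First, I would look for $e^{tA}W_0$ formally. For each $n \in \mathbb{Z}^d$, projecting \eqref{wave_vect} with $V=u_j=0$ on $e^{inx}$ gives a scalar second-order ODE
\[
\ddot{c}_n(w)(t) = -(1+|n|^2)\, c_n(w)(t) = -\langle n\rangle^2 c_n(w)(t),
\]
whose solution, with initial Fourier data $c_n(w_0)$ and $c_n(\dot w_0)$, is precisely the coefficient appearing in \eqref{SG:w1w2}. This both justifies the formula and shows that the only candidate for $e^{tA}W_0$ is the one given. Introduce, for each $n$, the matrix
\[
M_n(t)=\begin{pmatrix} \cos(\langle n\rangle t) & \sin(\langle n\rangle t)/\langle n\rangle \\ -\langle n\rangle \sin(\langle n\rangle t) & \cos(\langle n\rangle t) \end{pmatrix},
\]
so that $(c_n(w(t)),c_n(\dot w(t)))^T = M_n(t)\,(c_n(w_0),c_n(\dot w_0))^T$.

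The next step is to verify that this formula defines an isometry on $H^1\times L^2$. By Parseval, up to a constant multiple,
\[
\|W\|^2 = \sum_{n\in\mathbb{Z}^d}\Bigl(\langle n\rangle^2 |c_n(w_1)|^2+|c_n(w_2)|^2\Bigr),
\]
and an elementary computation shows that $M_n(t)$ preserves the quadratic form $\langle n\rangle^2 |a|^2+|b|^2$ (the cross-terms cancel via $\cos^2+\sin^2=1$). Summing over $n$ yields $\|e^{tA}W_0\|=\|W_0\|$, which simultaneously shows that the series defining $w(t),\dot w(t)$ converge in $H^1$ and $L^2$ respectively, and that $e^{tA}$ is a bounded linear operator from $H^1\times L^2$ to itself with norm $1$. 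The group law $e^{(t+s)A}=e^{tA}e^{sA}$ follows from the trigonometric addition formulas, equivalently from $M_n(t+s)=M_n(t)M_n(s)$, applied mode-by-mode.

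For the continuity in $t$ with values in $H^1\times L^2$, I would use that each partial sum $S_N W(t)$ (truncation at $|n|\leq N$) is a trigonometric polynomial in $t$, hence continuous, while the tails are controlled uniformly in $t$ by $\sum_{|n|>N}(\langle n\rangle^2|c_n(w_0)|^2+|c_n(\dot w_0)|^2)$, which tends to $0$ as $N\to\infty$ since $W_0\in H^1\times L^2$. The uniform limit of continuous functions is continuous.

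For the $C^1(\R,L^2\times H^{-1})$ regularity and the equation $\frac{dW}{dt}=AW$, I would differentiate the Fourier expansions term by term. The candidate derivative has $n$-th Fourier coefficient
\[
\bigl(c_n(\dot w(t)),\; -\langle n\rangle^2 c_n(w(t))\bigr),
\]
i.e.\ $\bigl(\dot w(t),(\Delta-1)w(t)\bigr)$. Its $L^2\times H^{-1}$ norm is computed by Parseval with weights $1$ and $\langle n\rangle^{-2}$, giving $\sum_n(|c_n(\dot w(t))|^2+\langle n\rangle^2|c_n(w(t))|^2)=\|W(t)\|^2$, which is finite and constant in $t$. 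Justifying that this is actually the derivative in the $L^2\times H^{-1}$ sense (rather than formally) uses the same truncation argument: on trigonometric polynomials the identity is trivial, and one passes to the limit using the $L^2\times H^{-1}$ bound just derived, together with continuity of $t\mapsto W(t)$.

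The only mildly subtle point is organising the truncation argument so that continuity in $H^1\times L^2$, differentiability in $L^2\times H^{-1}$, and the semigroup property are established in the correct order; once the Fourier expansion \eqref{SG:w1w2} is written down, everything is a weighted-Parseval computation plus the observation that $M_n(t)$ acts as a rotation in the weighted norm.
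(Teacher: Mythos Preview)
Your proof is correct and follows the standard Fourier-diagonalisation route for the Klein--Gordon group on the torus. Note, however, that the paper does not actually supply a proof of this proposition: it is presented in Section~2.1 as a recollection of ``classical properties of the group associated with the wave equation'' and is stated without demonstration. Your argument is precisely the kind of verification one would write if asked to justify such a classical fact, and all the steps (mode-by-mode isometry via the weighted Parseval identity, uniform-in-$t$ control of the Fourier tails for continuity, and term-by-term differentiation into $L^2\times H^{-1}$) are sound.
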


\subsection{Generated groups}

In this section, we recall the construction of groups of bounded operators on $H^1 \times L^2(\T^d)$ via the Duhamel formula. This formula will be used to prove estimates in this article.

\begin{proposition} \label{Lem:A+L}
Let $L$ be a bounded operator on $H^1 \times L^2(\T^d)$. 
For every $W_0 \in H^1 \times L^2(\T^d)$, there exists a unique  
$W \in C^0(\R,H^1 \times L^2(\T^d))$ such that, for every $t \in \R$,
$$W(t)=e^{tA}W_0 + \int_0^t e^{(t-s)A} L W(s) ds.$$
Moreover $W \in  C^1(\R,L^2 \times H^{-1}(\T^d))$ and for every $t \in \R$,
$$\| W(t) \|_{H^1 \times L^2} \leq e^{\|L\|t} \|W_0\|_{H^1 \times L^2}
\qquad \text{ and } \qquad
\frac{dW}{dt}(t)=(A+L)W(t),$$
where the last equality holds in $L^2 \times H^{-1}(\T^d)$. 
\\
Therefore, the notation $\exp(t(A+L))W_0 := W(t)$ defines a group
$\big(\exp(t(A+L))\big)_{t\in\R}$ of bounded operators on $H^1 \times L^2(\T^d)$ such that
$$\| \exp(t(A+L)) \| \leq e^{\|L\|t}.$$
\end{proposition}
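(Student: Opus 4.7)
The plan is to solve the Duhamel integral equation by a Banach fixed point argument on $C^0([-T,T], H^1 \times L^2)$, extend to all of $\R$ by concatenation, derive the exponential norm bound by Grönwall's lemma, and finally verify the $C^1$ regularity in the weaker space $L^2 \times H^{-1}$ together with the group property through uniqueness.

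For \emph{existence, uniqueness, and the norm estimate}: fix $T>0$ and work in $E_T:=C^0([-T,T], H^1\times L^2(\T^d))$ equipped with the sup norm. Define
$$\Phi(W)(t) := e^{tA} W_0 + \int_0^t e^{(t-s)A} L W(s)\,\mathrm{d}s.$$
By \Cref{Lem:A}, $e^{tA}$ is an isometry of $H^1 \times L^2$, so $\Phi$ maps $E_T$ to itself and satisfies $\|\Phi(W_1)-\Phi(W_2)\|_{E_T} \leq T\|L\|\,\|W_1-W_2\|_{E_T}$. Choosing $T\|L\|<1$ makes $\Phi$ a strict contraction; Banach's fixed point theorem yields a unique $W\in E_T$, and concatenating these local solutions gives a unique $W\in C^0(\R, H^1\times L^2)$ satisfying the Duhamel identity for every $t\in\R$ (the semigroup property of $e^{tA}$ ensures that the glued function still satisfies the Duhamel equation from $0$). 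Taking norms and invoking the isometry of $e^{tA}$ once more gives $\|W(t)\|\leq \|W_0\| + \int_0^{|t|}\|L\|\,\|W(s)\|\,\mathrm{d}s$, and Grönwall's lemma yields $\|W(t)\|\leq e^{\|L\||t|}\|W_0\|$.

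For the \emph{$C^1$ regularity and the differential equation}: by \Cref{Lem:A} the map $t\mapsto e^{tA}W_0$ lies in $C^1(\R, L^2\times H^{-1})$ with derivative $Ae^{tA}W_0$. Write the Duhamel integral as $I(t):=\int_0^t e^{(t-s)A}LW(s)\,\mathrm{d}s$ and compute its difference quotient at $t\in\R$ by splitting
$$\frac{I(t+h)-I(t)}{h} = \frac{1}{h}\int_t^{t+h} e^{(t+h-s)A}LW(s)\,\mathrm{d}s + \frac{e^{hA}-\operatorname{Id}}{h}\,I(t).$$
The first piece converges to $LW(t)$ in $H^1\times L^2$ by continuity of $s\mapsto e^{(t-s)A}LW(s)$, while the second piece converges to $A I(t)$ in $L^2 \times H^{-1}$ by \Cref{Lem:A} applied to $I(t)\in H^1\times L^2$. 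Hence $I'(t)=LW(t)+A I(t)$ in $L^2\times H^{-1}$, and summing the two contributions with the Duhamel identity produces $W'(t)=(A+L)W(t)$ in $L^2\times H^{-1}$.

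The group property $W(t+s;W_0)=W(t;W(s;W_0))$ is then immediate from uniqueness applied to the Duhamel equation with initial datum $W(s;W_0)$; linearity of that equation in $W_0$ together with the Grönwall bound yields both the linearity of $\exp(t(A+L))$ and the operator bound $\|\exp(t(A+L))\|\leq e^{\|L\||t|}$. The main technical subtlety is precisely the $C^1$ statement in the weaker space: a generic $W_0\in H^1\times L^2$ does not lie in $D(A)=H^2\times H^1$, so one cannot expect differentiability in $H^1\times L^2$; one must genuinely drop to $L^2\times H^{-1}$ at exactly the derivative step, while retaining continuity in $H^1\times L^2$ elsewhere in order not to break the contraction framework or the norm estimate.
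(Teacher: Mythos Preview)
Your proof is correct and follows the same route as the paper: Banach fixed point on a short time interval with $T\|L\|<1$, concatenation to all of $\R$, Gr\"onwall for the exponential bound, and the $C^1$ regularity in $L^2\times H^{-1}$ coming from that of $t\mapsto e^{tA}$. The paper's proof is much terser (it dispatches the $C^1$ statement in a single sentence), whereas you spell out the difference-quotient splitting for the Duhamel integral and the group property via uniqueness; these are exactly the details one would expect to fill in.
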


\begin{proof}
For the existence and uniqueness of $W$, we apply the fixed point theorem on\newline$C^0([0,T],H^1 \times L^2(\T^d))$ with $T \|L\|<1$. Since $T$ does not depend on $W_0$, the solution with the maximal definition interval is defined on $\R$. The $C^1$-regularity and the differential equation result from the ones of $t \mapsto e^{tA}$ (see \Cref{Lem:A}). For the bound, we apply Gronwall Lemma to the Duhamel formula. 
\end{proof}

\subsection{Operators associated with matrices}

The goal of this section is to prove the following result, where  $\rho$ is defined by \eqref{def:q_rho} and
\begin{equation} \label{def:rho_tild}
\widetilde{\rho} = \left\lbrace \begin{array}{ll}
2 & \text{ if } d \in \{1,2,3\}, \\
2^+ & \text{ if } d=4, \\
\frac{d}{2} & \text{ if } d \geq 5.
\end{array}\right.
\end{equation}

\begin{proposition} \label{Lem:M}
\begin{enumerate}[topsep=0cm, parsep=0cm, itemsep=0cm]
\item There exists $C>0$ such that, for every
$(m_1,m_2,m_3) \in  W^{1,\infty} \times L^{\rho} \times L^{\infty}(\mathbb{T}^d)$, the operator
\begin{equation} \label{def:M}
M:=\begin{pmatrix}
 m_{1} & 0 \\
 m_{2} & m_{3}
 \end{pmatrix}
\end{equation}
is bounded on $H^1 \times L^2(\T^d)$ and 
$\|M\| \leq C \big( \|m_1\|_{W^{1,\infty}}+\|m_{2}\|_{L^{\rho}}+\|m_{3}\|_{L^{\infty}} \big)$.
\item If $\phi \in L^{\rho}(\T^d)$ and $\nabla \phi \in L^{\widetilde{\rho}}(\T^d)$ then the operator
$\exp(\phi B)$ is bounded on $H^2 \times H^1(\T^d)$ (see \eqref{exp(phiB)}).
\end{enumerate}
\end{proposition}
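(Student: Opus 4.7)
The plan is to prove Part 1 by a direct computation combining Hölder's inequality with Sobolev embeddings, and then to deduce Part 2 by exploiting the nilpotency $B^{2}=0$, so that $\exp(\phi B)=I+\phi B$ reduces the question to an $H^{2}\to H^{1}$ multiplication estimate.

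For Part 1, I would expand
\[
\|MW\|^{2}=\|m_{1}w_{1}\|_{H^{1}}^{2}+\|m_{2}w_{1}+m_{3}w_{2}\|_{L^{2}}^{2},
\]
and control each piece. The term $\|m_{1}w_{1}\|_{H^{1}}$ is bounded by $C\|m_{1}\|_{W^{1,\infty}}\|w_{1}\|_{H^{1}}$ via the Leibniz rule, while $\|m_{3}w_{2}\|_{L^{2}}\leq\|m_{3}\|_{L^{\infty}}\|w_{2}\|_{L^{2}}$ is trivial. The key piece is $\|m_{2}w_{1}\|_{L^{2}}$, which I would estimate by Hölder as $\|m_{2}\|_{L^{\rho}}\|w_{1}\|_{L^{\rho'}}$ with $\tfrac{1}{\rho}+\tfrac{1}{\rho'}=\tfrac{1}{2}$, and then invoke the Sobolev embedding $H^{1}(\T^{d})\hookrightarrow L^{\rho'}(\T^{d})$. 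The definition \eqref{def:q_rho} of $\rho$ is precisely the smallest exponent for which this embedding holds: $\rho=d$ yields $\rho'=2d/(d-2)$ for $d\geq 3$, $\rho=2$ suffices when $d=1$ because $H^{1}\hookrightarrow L^{\infty}$, and $\rho=2^{+}$ handles the borderline $d=2$ where $H^{1}\hookrightarrow L^{q}$ for every $q<\infty$ but not into $L^{\infty}$.

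For Part 2, a direct computation shows that $B^{2}=0$ as an operator, hence $\exp(\phi B)=I+\phi B$ and $\exp(\phi B)(w_{1},w_{2})=(w_{1},w_{2}+\phi w_{1})$. Boundedness on $H^{2}\times H^{1}$ therefore reduces to proving that $w_{1}\mapsto \phi w_{1}$ maps $H^{2}$ into $H^{1}$. The $L^{2}$-norm of $\phi w_{1}$ is controlled by Part 1 applied with $(m_{1},m_{2},m_{3})=(0,\phi,0)$, giving $\|\phi w_{1}\|_{L^{2}}\leq C\|\phi\|_{L^{\rho}}\|w_{1}\|_{H^{1}}\leq C\|\phi\|_{L^{\rho}}\|w_{1}\|_{H^{2}}$. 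For the gradient I would use the Leibniz rule $\nabla(\phi w_{1})=(\nabla\phi)w_{1}+\phi\nabla w_{1}$: the second summand is again handled by Part 1 applied to $\nabla w_{1}\in H^{1}$, while for the first I would use Hölder,
\[
\|(\nabla\phi)w_{1}\|_{L^{2}}\leq\|\nabla\phi\|_{L^{\widetilde{\rho}}}\|w_{1}\|_{L^{\widetilde{\rho}'}},\qquad \frac{1}{\widetilde{\rho}}+\frac{1}{\widetilde{\rho}'}=\frac{1}{2},
\]
combined with the Sobolev embedding $H^{2}(\T^{d})\hookrightarrow L^{\widetilde{\rho}'}(\T^{d})$. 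The definition \eqref{def:rho_tild} of $\widetilde{\rho}$ is tailored exactly to this embedding, with $\widetilde{\rho}=2$ for $d\leq 3$ (where $H^{2}\hookrightarrow L^{\infty}$), the borderline $\widetilde{\rho}=2^{+}$ for $d=4$, and $\widetilde{\rho}=d/2$ yielding $\widetilde{\rho}'=2d/(d-4)$ for $d\geq 5$.

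The calculations themselves are routine; the only real obstacle is the careful bookkeeping of Sobolev exponents across dimensions, in particular the borderline cases $d=2$ and $d=4$ that force the $2^{+}$ exponents in \eqref{def:q_rho} and \eqref{def:rho_tild}. Once the correct exponents are chosen, each estimate reduces to a single application of Hölder followed by a Sobolev embedding, and the constants can be absorbed into a single $C>0$ depending only on $d$.
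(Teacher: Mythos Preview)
Your proof is correct and follows essentially the same route as the paper: the paper packages the key multiplication estimates into an auxiliary lemma (items $H^{1}\to L^{2}$, $H^{2}\to L^{2}$, $H^{2}\to H^{1}$) proved by exactly the H\"older-plus-Sobolev argument you describe, with the same dimension-by-dimension bookkeeping for $\rho$ and $\widetilde{\rho}$. Your reduction of Part~2 via $B^{2}=0$ so that $\exp(\phi B)=I+\phi B$, followed by the Leibniz decomposition $\nabla(\phi w_{1})=(\nabla\phi)w_{1}+\phi\nabla w_{1}$, matches the paper's treatment.
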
 

In particular, Propositions \ref{Lem:A+L} and \ref{Lem:M} prove the well-posedness stated in \Cref{Prop:WP} (for a constant control $u \in \R^{2d+1}$, take $L=m_2 B$ where $m_2=V+\sum_{j=0}^{2d} u_j V_j$ belongs to $L^{\rho}(\T^d)$).
The proof of \Cref{Lem:M} is an immediate consequence of the following statement.

\begin{lemma} \label{Lem:Sob}
Let $\rho, \widetilde{\rho}$ be as in \eqref{def:q_rho} and \eqref{def:rho_tild}.
\begin{enumerate}[topsep=0cm, parsep=0cm, itemsep=0cm]
\item If $\phi \in L^{\rho}(\T^d)$ then the operator
$w \in H^1(\T^d) \mapsto \phi w \in L^2(\T^d)$ is bounded.
\item If $\phi \in L^{\widetilde{\rho}}(\T^d)$ then the operator
$w \in H^2(\T^d) \mapsto \phi w \in L^2(\T^d)$ is bounded.
\item If $\phi \in L^{\rho}(\T^d)$ and $\nabla \phi \in L^{\widetilde{\rho}}(\T^d)$ then the operator
$w \in H^2(\T^d) \mapsto \phi w \in H^1(\T^d)$ is bounded.
\end{enumerate}
\end{lemma}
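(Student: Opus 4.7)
The plan is to derive all three statements from Hölder's inequality combined with the Sobolev embeddings on $\T^d$, after checking that the exponents $\rho$ and $\widetilde{\rho}$ are precisely tuned so that the Hölder relation closes. Recall the embeddings:
\begin{equation*}
H^1(\T^d) \hookrightarrow L^{q_1}(\T^d), \quad q_1 = \begin{cases} \infty & d=1 \\ \text{any finite } q & d=2 \\ \tfrac{2d}{d-2} & d \geq 3 \end{cases},
\qquad
H^2(\T^d) \hookrightarrow L^{q_2}(\T^d), \quad q_2 = \begin{cases} \infty & d \in \{1,2,3\} \\ \text{any finite } q & d=4 \\ \tfrac{2d}{d-4} & d \geq 5 \end{cases}.
\end{equation*}

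For statement 1, I would write, by Hölder,
\[\|\phi w\|_{L^2} \leq \|\phi\|_{L^\rho} \|w\|_{L^{q_1}},\]
then check case by case that $\tfrac{1}{\rho}+\tfrac{1}{q_1}=\tfrac{1}{2}$: trivially for $d=1$ ($\rho=2$, $q_1=\infty$); for $d=2$ take $q_1$ large enough to compensate for $\rho=2^+$; for $d\geq 3$ compute $\tfrac{1}{d}+\tfrac{d-2}{2d}=\tfrac{1}{2}$. Combined with the Sobolev embedding, this yields boundedness of the multiplication $w \mapsto \phi w$ from $H^1$ to $L^2$.

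Statement 2 is identical in spirit: by Hölder,
\[\|\phi w\|_{L^2} \leq \|\phi\|_{L^{\widetilde{\rho}}} \|w\|_{L^{q_2}},\]
and one checks $\tfrac{1}{\widetilde{\rho}}+\tfrac{1}{q_2}=\tfrac{1}{2}$ case by case (trivially for $d\leq 3$; $\widetilde{\rho}=2^+$ against $q_2$ arbitrarily large for $d=4$; $\tfrac{2}{d}+\tfrac{d-4}{2d}=\tfrac{1}{2}$ for $d\geq 5$). For statement 3, apply Leibniz rule $\nabla(\phi w)=(\nabla \phi)w+\phi\,\nabla w$: the first term is controlled by $\|\nabla \phi\|_{L^{\widetilde{\rho}}}\|w\|_{H^2}$ via statement 2 applied to $w$; the second term is controlled by $\|\phi\|_{L^\rho}\|\nabla w\|_{H^1}\leq \|\phi\|_{L^\rho}\|w\|_{H^2}$ via statement 1 applied to $\nabla w$. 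The $L^2$ bound on $\phi w$ itself follows from statement 1 (and the embedding $H^2\hookrightarrow H^1$).

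There is no real obstacle here; the only mild subtlety is bookkeeping in low dimension. For $d=2$ (resp.\ $d=4$) the assumption $\rho=2^+$ (resp.\ $\widetilde{\rho}=2^+$) is there precisely because the borderline Sobolev embedding $H^1\hookrightarrow L^\infty$ (resp.\ $H^2\hookrightarrow L^\infty$) fails; one must therefore pick a finite $q_1$ (resp.\ $q_2$) strictly larger than $2$, which forces $\rho$ (resp.\ $\widetilde{\rho}$) to be strictly larger than $2$, and then verify that $1/\rho+1/q_1=1/2$ (resp.\ $1/\widetilde{\rho}+1/q_2=1/2$) can still be arranged. Once this matching of exponents is verified, \Cref{Lem:M} follows immediately: each of the three matrix entries $m_1, m_2, m_3$ is handled by the appropriate block of \Cref{Lem:Sob} (statements 1 and 2 control the mapping into the $L^2$ component, and the $W^{1,\infty}$ and $L^\infty$ norms trivially handle the other mapping properties by pointwise multiplication).
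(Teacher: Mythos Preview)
Your proof is correct and follows essentially the same approach as the paper: both arguments combine H\"older's inequality with the critical Sobolev embeddings $H^1 \hookrightarrow L^{q_1}$ and $H^2 \hookrightarrow L^{q_2}$, verify case by case that the exponents $\rho$, $\widetilde{\rho}$ are exactly the H\"older conjugates needed, and then derive statement~3 from the Leibniz rule together with statements~1 and~2.
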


\begin{proof}
We first remark that 
\begin{equation} \label{step0}
q \in [2,\infty],\, w \in L^q(\T^d),\, \phi \in L^{\frac{2q}{q-2}}(\T^d)
\quad \Rightarrow \quad
\phi w \in L^2(\T^d)
\end{equation}
where, by convention $\frac{2q}{q-2}=2$ when $q=\infty$ and 
$\frac{2q}{q-2}=\infty$ when $q=2$. Indeed, when $q \in (2,\infty)$, by Hölder inequality,
$$\int_{\T^d} |\phi w|^2 \leq 
\left( \int_{\T^d} |\phi|^{2r} \right)^{\frac{1}{r}}
\left( \int_{\T^d} |w|^{2 \frac{q}{2}} \right)^{\frac{2}{q}}
= \|\phi\|_{L^{2r}}^2 \|w\|_{L^q}^2$$
where $r$ and $q/2$ are conjugate indices, i.e. $\frac{1}{r}+\frac{2}{q}=1$,
which implies $2r=\frac{2q}{q-2}$.
\\
\noindent \emph{1.} We have the Sobolev embedding $H^1(\T^d) \subset L^q(\T^d)$ where
\begin{itemize}[topsep=0cm, parsep=0cm, itemsep=0cm]
    \item if $d=1$, then $q=\infty$ and thus $\frac{2q}{q-2}=2$,
    \item if $d=2$, then $q$ can take any value in $[2,\infty)$ thus $\frac{2q}{q-2}$ can take any value $>2$, 
    \item if $d\geq 3$, then $q=\frac{2d}{d-2}$ thus $\frac{2q}{q-2}=d$.
\end{itemize}

\noindent \emph{2.} We have the Sobolev embedding $H^2(\T^d) \subset L^{\widetilde{q}}(\T^d)$ where
\begin{itemize}[topsep=0cm, parsep=0cm, itemsep=0cm]
    \item if $d \in \{1,2,3\}$, then $\widetilde{q}=\infty$ and thus $\frac{2\widetilde{q}}{\widetilde{q}-2}=2$,
    \item if $d=4$, then $\widetilde{q}$ can take any value in $[2,\infty)$ thus $\frac{2\widetilde{q}}{\widetilde{q}-2}$ can take any value $>2$, 
    \item if $d\geq 5$, then $\widetilde{q}=\frac{2d}{d-4}$ thus $\frac{2\widetilde{q}}{\widetilde{q}-2}=\frac{d}{2}$.
\end{itemize}

\noindent \emph{3.} Using the previous 2 statements, we obtain
$\| (\nabla \phi) w \|_{L^2} \leq \| \nabla \phi \|_{L^{\widetilde{\rho}}} \|w\|_{H^2}$ and
$\| \phi (\nabla w) \|_{L^2} \leq \| \phi \|_{L^{\rho}} \|w\|_{H^2}$, which gives the conclusion.
\end{proof}

\subsection{STAR operators}

We introduce the concept of $H^1\times L^2$-STAR operators to describe small-time approximately reachable targets.

\begin{definition}[$H^1\times L^2$-STAR operator] \label{def:STARop}
A bounded operator $L$ on $H^1 \times L^2(\T^d)$ is $H^1 \times L^2$-small-time approximately reachable if for every $W_0 \in H^1 \times L^2(\T^d)$ and $\epsilon>0$ there exist $T \in [0,\epsilon]$ and $u \in PWC([0,T],\R^{2d+1})$ such that $\| W(T;u,W_0) - LW_0 \|_{H^1 \times L^2}<\epsilon$.
\end{definition}

For instance, the small-time transition \eqref{exp(phiB)_target} corresponds to the $H^1 \times L^2$-STAR property for the operator 
\begin{equation} \label{exp(phiB)}
e^{\phi B}=\begin{pmatrix}
    1 & 0 \\ \phi & 1 
\end{pmatrix}. 
\end{equation}
This is not the case for the small-time transition \eqref{exp(aB*)_target} because the involved operator is not bounded on $H^1 \times L^2(\T^d)$. We easily deduce from \Cref{def:STARop} the following properties (see \Cref{Appendix:STAR} for a proof).

\begin{lemma} \label{Lem:STAR}
The composition and strong limit of $H^1 \times L^2$-STAR operators are $H^1 \times L^2$-STAR operators.
\end{lemma}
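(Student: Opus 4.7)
The plan is to handle the two assertions separately, in each case reducing to a single triangle-inequality estimate after exploiting the flow property associated with piecewise-constant controls (i.e.\ the fact that concatenating two piecewise-constant controls produces, by the uniqueness part of \Cref{Prop:WP}, a trajectory which is the concatenation of the two individual trajectories).

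\textbf{Composition.} Suppose $L_1,L_2$ are $H^1\times L^2$-STAR. Fix $W_0\in H^1\times L^2(\T^d)$ and $\epsilon>0$. I would first apply the STAR property of $L_2$ at $W_0$ with precision $\delta_2:=\min\{\epsilon/2,\,\epsilon/(2(1+\|L_1\|))\}$ and time window $\epsilon/2$, producing $T_2\in[0,\epsilon/2]$ and $u^{(2)}\in PWC([0,T_2],\R^{2d+1})$ such that the intermediate state $W_1:=W(T_2;u^{(2)},W_0)$ satisfies $\|W_1-L_2W_0\|<\delta_2$. Then I would apply the STAR property of $L_1$ at the new initial datum $W_1$ with precision $\epsilon/2$ and time window $\epsilon/2$, obtaining $T_1\in[0,\epsilon/2]$ and $u^{(1)}\in PWC([0,T_1],\R^{2d+1})$ with $\|W(T_1;u^{(1)},W_1)-L_1W_1\|<\epsilon/2$. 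Concatenating the controls into $u\in PWC([0,T_1+T_2],\R^{2d+1})$ (using $u^{(2)}$ on $[0,T_2]$ and $u^{(1)}(\cdot-T_2)$ on $[T_2,T_1+T_2]$), the flow property yields $W(T_1+T_2;u,W_0)=W(T_1;u^{(1)},W_1)$, and the triangle inequality gives
\[
\|W(T_1+T_2;u,W_0)-L_1L_2W_0\|\le \tfrac{\epsilon}{2}+\|L_1\|\,\delta_2<\epsilon,
\]
while $T_1+T_2\le\epsilon$, proving the STAR property of $L_1L_2$.

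\textbf{Strong limit.} Suppose $(L_n)_{n\in\N}$ is a sequence of $H^1\times L^2$-STAR operators converging strongly to some $L$. I would first note that $L$ is automatically a bounded operator: by the Banach–Steinhaus theorem, pointwise convergence on a Banach space yields uniform boundedness of $\|L_n\|$, and then $\|LW\|\le\liminf_n\|L_nW\|\le(\sup_n\|L_n\|)\|W\|$ for every $W$. Now fix $W_0$ and $\epsilon>0$. By strong convergence there exists $n$ with $\|L_nW_0-LW_0\|<\epsilon/2$. Applying the STAR property of $L_n$ with precision $\epsilon/2$ at $W_0$ provides $T\in[0,\epsilon]$ and $u\in PWC([0,T],\R^{2d+1})$ such that $\|W(T;u,W_0)-L_nW_0\|<\epsilon/2$. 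The triangle inequality then gives $\|W(T;u,W_0)-LW_0\|<\epsilon$, as required.

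\textbf{Main obstacle.} Neither step is genuinely difficult once the strategy is fixed; the only bookkeeping point is the factor $\|L_1\|$ appearing in the composition argument, which forces the first approximation to be sharper by a factor depending on $\|L_1\|$. This is innocuous since $L_1$ is bounded. The one conceptual ingredient one must not forget is the flow/concatenation property for piecewise-constant controls (consequence of uniqueness in \Cref{Prop:WP}), which is what makes a two-stage control strategy legitimate.
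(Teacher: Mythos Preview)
Your proof is correct and close in spirit to the paper's. The strong-limit argument is essentially identical (you add the Banach--Steinhaus observation to justify boundedness of $L$, which the paper leaves implicit). For the composition, the paper takes a slightly different route: it first fixes the \emph{second} control $(T_2,u_2)$ approximating $L_2$ at the point $L_1W_0$, then invokes the continuity of the flow map $\widetilde{W}_0\mapsto W(T_2;u_2,\widetilde{W}_0)$ to choose how accurately the first step must approximate $L_1W_0$. You instead absorb the first-step error through the operator norm $\|L_1\|$. Both are elementary; your version is a little more self-contained since it uses only the boundedness built into the STAR definition, while the paper's version relies on the Lipschitz dependence of the flow on initial data (a consequence of \Cref{Lem:A+L}).
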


This lemma is the reason why we are interested in strong convergence criteria in the following section. We recall the following usual definition.

\begin{definition}
Let $(L_n)_{n\in\N}$ and L be bounded operators on $H^1 \times L^2(\T^d)$.
We say that $(L_n)_{n\in\N}$ strongly converges towards $L$ if, for every $W_0 \in H^1 \times L^2(\T^d)$, $\|(L_n-L) W_0 \| \to 0$ as $n \to \infty$.
\end{definition}

\section{A criterium for strong convergence}
\label{sec:CV}

The goal of this section is to prove the following abstract criterium of strong convergence. This criterium shows that it suffices to study the strong convergence 
at the level of generators. It is a classical tool when dealing with skew-adjoint generators (by combining, e.g., \cite[Theorem VIII.21 \& Theorem VIII.25(a)]{rs1}). We show it here in a different setting. It will be applied several times in this article.

\begin{proposition} \label{Lem:strongCV}
Let $\rho$ be as in \eqref{def:q_rho} and $(m_1^{\tau},m_2^{\tau},m_3^{\tau})_{\tau \in [0,1]}$ be bounded a family of $W^{1,\infty} \times L^{\rho} \times L^{\infty}(\mathbb{T}^d)$ for which the associated operators $M_{\tau}$ (see \eqref{def:M}) strongly converge towards $M_0$ on $H^1 \times L^2(\T^d)$. We assume that $(e^{tM_0})_{t\in\R}$ is a group of bounded operators on $H^2 \times H^1(\T^d)$. Then the operators
$\exp(\tau A + M_{\tau})$ 
strongly converge towards 
$\exp(M_0)$ as $\tau \to 0$.
\end{proposition}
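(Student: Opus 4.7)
My plan is a Trotter--Kato-type argument based on Duhamel's formula. I will first establish convergence for regular initial data $W_0\in H^2\times H^1=D(A)$ by subtracting the two integral formulations and applying Gronwall, and then extend to $H^1\times L^2$ by a uniform boundedness / density argument. The role of the hypothesis that $(e^{tM_0})_{t\in\R}$ preserves $H^2\times H^1$ will be precisely to control the error $\tau A W^0(s)$, which is otherwise untamable as $A$ is unbounded.

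\textbf{Step 1 (uniform bounds).} The boundedness of $(m_1^\tau,m_2^\tau,m_3^\tau)_\tau$ in $W^{1,\infty}\times L^\rho \times L^\infty$ together with \Cref{Lem:M} gives a constant $C$ with $\|M_\tau\|\leq C$ for all $\tau\in[0,1]$. Since $\tau A$ generates a group of isometries on $H^1\times L^2$ (use \Cref{Lem:A} with $A$ replaced by $\tau A$), \Cref{Lem:A+L} applied to $\tau A + M_\tau$ yields the uniform bound $\|\exp(t(\tau A+M_\tau))\|\leq e^{Ct}$. The strong convergence $M_\tau \to M_0$ combined with this uniform bound ensures that $M_0$ is bounded on $H^1\times L^2$, so $\exp(M_0)$ is well defined there.

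\textbf{Step 2 (convergence on $D(A)$).} Fix $W_0\in H^2\times H^1$ and set $W^\tau(t)=\exp(t(\tau A+M_\tau))W_0$, $W^0(t)=e^{tM_0}W_0$. By hypothesis $W^0\in C^0([0,1],H^2\times H^1)$, so $\sup_{s\in[0,1]}\|AW^0(s)\|_{H^1\times L^2}<\infty$. Writing the Duhamel formula for $W^\tau$ relative to $e^{t\tau A}$ (via \Cref{Lem:A+L}), and writing the analogous formula for $W^0$ by adding and subtracting $\tau A W^0$ in its evolution equation (this is meaningful in $L^2\times H^{-1}$ since $W^0(s)\in D(A)$), I subtract to obtain
$$
W^\tau(t)-W^0(t)=\int_0^t e^{(t-s)\tau A}\big[M_\tau\big(W^\tau(s)-W^0(s)\big)+(M_\tau-M_0)W^0(s)+\tau A W^0(s)\big]\,ds.
$$
Using that $e^{(t-s)\tau A}$ is isometric on $H^1\times L^2$, that $\|M_\tau\|\leq C$, that $\|\tau A W^0(s)\|\leq \tau C'$ uniformly in $s$, and that $(M_\tau-M_0)W^0(s)\to 0$ in $H^1\times L^2$ for each $s$ (bounded uniformly by $2C\sup_{s}\|W^0(s)\|$, so dominated convergence kills the associated integral), Gronwall's lemma gives $\|W^\tau(1)-W^0(1)\|\to 0$.

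\textbf{Step 3 and obstacle.} For a general $W_0\in H^1\times L^2$, approximate it in norm by $W_0^n\in H^2\times H^1$ and apply a $3\varepsilon$ argument together with the uniform bound $\|\exp(\tau A+M_\tau)\|\leq e^C$ from Step 1 and the boundedness of $\exp(M_0)$. The main obstacle is the coexistence of the unbounded generator $\tau A$ with the bounded perturbation $M_\tau$: the term $\tau A W^0(s)$ can only be controlled in the base space $H^1\times L^2$ thanks to the regularity assumption $e^{tM_0}(H^2\times H^1)\subset H^2\times H^1$. A secondary technical point is that $M_\tau\to M_0$ only strongly (not in operator norm), so the corresponding integral term must be treated via dominated convergence using the uniform bound on $\|M_\tau\|$.
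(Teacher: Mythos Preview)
Your proof is correct and follows the same Duhamel--Gronwall--density strategy as the paper; the paper merely rescales time (working on $[0,\tau]$ with free evolution $e^{tA}$ instead of your $[0,1]$ with $e^{t\tau A}$) and handles the $(M_\tau-M_0)W^0(s)$ integral by splitting at $W_0$ rather than by dominated convergence. Your DCT treatment of that term is in fact cleaner, since it uses only the strong convergence $M_\tau\to M_0$ as stated in the hypothesis.
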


\begin{proof}
By \Cref{Lem:M}, there exists $\ell>0$ such that, for every $\tau \in [0,1]$, $\|M_{\tau}\| \leq \ell$. 
By \Cref{Lem:A+L}, the operators $\exp(\tau A+M_{\tau})$ are bounded on $H^1 \times L^2(\T^d)$ uniformly with respect to $\tau \in [0,1]$: $\| \exp(\tau A+M_{\tau}) \| \leq e^{\ell}$.
Thus it suffices to prove the convergence in $H^1 \times L^2(\T^d)$ of $\exp(\tau A +M_{\tau})W_0$ for $W_0$ in a dense subset of $H^1 \times L^2(\T^d)$. 

Let $W_0 \in H^2 \times H^1(\T^d,\R^2)$ and $\tau \in (0,1]$. For $t \in \R$, we define
$W(t)=\exp(t(A+\frac{M_{\tau}}{\tau}))W_0$ and
$Z(t)=\exp(t \frac{M_0}{\tau})W_0$.
By definition (see \Cref{Lem:A+L}), the following equality holds in $H^1 \times L^2(\T^d)$ for every $t \in \R$,
$$W(t)=e^{tA}W_0 + \int_0^t e^{(t-s)A} \frac{M_{\tau}}{\tau} W(s) ds.$$
Since $(e^{tM_0})_{t\in\R}$ is a group of bounded operators on $H^2 \times H^1(\T^d)$ then $Z \in C^0(\R,H^2 \times H^1(\T^d))$ and satisfies the following equality in $H^1 \times L^2(\T^d)$ for every $t\in\R$,
$$Z(t)=e^{tA} W_0 + \int_0^t e^{(t-s)A} \left( \frac{M_0}{\tau}-A \right)Z(s) ds. $$
Thus, for every $t \in \R$,
$$(W-Z)(t)= \int_0^t e^{(t-s)A} 
\left( 
\frac{M_{\tau}}{\tau} (W-Z)(s) + 
\left( \frac{M_{\tau}-M_0}{\tau}  + A \right) Z(s)
\right) ds$$
and, by Gronwall Lemma
$$\| (W-Z)(\tau) \| \leq e^{\|M_{\tau}\|}
\left(
\frac{1}{\tau} \int_0^{\tau} \|(M_{\tau}-M_0)Z(s) \| ds
+
\int_0^{\tau} \|AZ(s)\| ds
\right) .$$
The first term in the right hand side is bounded by
$$
\|(M_{\tau}-M_0)W_0 \|
+ \frac{1}{\tau} \!\int_0^{\tau}\! \|(M_{\tau}-M_0)(Z(s)-W_0) \| ds
 \leq \!  \|(M_{\tau}-M_0)W_0 \|
+ 2 \ell \underset{s \in [0,\tau]}{\sup}\| Z(s)-W_0 \|.
$$
The second term in the right hand side is bounded by
$$\tau \underset{s \in [0,\tau]}{\sup} \|Z(s)\|_{H^2 \times H^1} 
\leq C \tau \|W_0\|_{H^2 \times H^1}.$$
These 2 bounds prove that 
$\| \exp(\tau A+M_{\tau})W_0-\exp(M_0)W_0 \| \to 0$ as $\tau \to 0$.
\end{proof}

\section{Small-time approximate control of the velocity}
\label{sec:STAC_vel}

The goal of this section is to prove \Cref{Prop:STAC_vel}. 
In  \Cref{subsec:SCV}, we prove strong convergences which form the basis of a saturation argument presented in \Cref{subsec:Saturation}. Finally, a density criterium presented in \Cref{subsec:DenseCrit} allows to end the proof of \Cref{Prop:STAC_vel} in \Cref{subsec:Proof_Vel}.

\subsection{Strong convergences}
\label{subsec:SCV}

\begin{proposition} \label{Cor:strongCV}
Let $A_V:=A+VB$ and $\phi \in W^{1,\infty}(\T^d)$. We have the following strong convergences of operators on $H^1 \times L^2(\T^d)$ as $\tau \to 0^+$
\begin{itemize}[topsep=0cm, parsep=0cm, itemsep=0cm]
 \item $\exp\big(\tau (A_V + \frac{\phi}{\tau}B) \big) \longrightarrow \exp(\phi B)$,
 \item $\exp(-\frac{\phi}{\sqrt{\tau}}B) \exp(\tau A_V) \exp( \frac{\phi }{\sqrt{\tau}}B) \longrightarrow \exp(-\phi^2 B)$.
\end{itemize}
\end{proposition}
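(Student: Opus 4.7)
The plan is to deduce both convergences from the abstract criterion \Cref{Lem:strongCV} by, in each case, identifying a generator of the form $\tau A + M_\tau$ whose time-$1$ flow coincides with the operator appearing in the statement, and then checking the three hypotheses of \Cref{Lem:strongCV}: boundedness of $(m_1^\tau, m_2^\tau, m_3^\tau)$ in $W^{1,\infty}\times L^\rho \times L^\infty$, (strong, in fact norm) convergence $M_\tau \to M_0$, and the fact that $(e^{tM_0})_{t \in \R}$ is a bounded group on $H^2 \times H^1$.

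For the first convergence, I would rewrite $\tau(A_V + \phi B/\tau) = \tau A + (\tau V + \phi)B$, so that $M_\tau := (\tau V + \phi)B$ corresponds to $m_1^\tau = m_3^\tau = 0$, $m_2^\tau = \tau V + \phi$. Then \Cref{Lem:M}.1 yields $\|M_\tau - \phi B\| \leq C\tau \|V\|_{L^\rho} \to 0$, while \Cref{Lem:M}.2 shows that $e^{t\phi B} = \bigl(\begin{smallmatrix} 1 & 0 \\ t\phi & 1 \end{smallmatrix}\bigr)$ is bounded on $H^2\times H^1$ because $\phi \in W^{1,\infty}$ gives both $\phi \in L^\rho$ and $\nabla \phi \in L^{\widetilde\rho}$.

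For the second convergence, the key step is to identify the generator of the conjugated flow. Setting $G_\tau := \phi B/\sqrt\tau$ and using $(\phi B)^2 = 0$, one has $e^{\pm G_\tau} = I \pm G_\tau$. Elementary matrix computations give
\[
[A_V, \phi B] = \begin{pmatrix} \phi & 0 \\ 0 & -\phi \end{pmatrix}, \qquad \phi B \cdot A_V \cdot \phi B = \phi^2 B,
\]
whence $e^{-G_\tau} A_V e^{G_\tau} = A_V + [A_V, G_\tau] - G_\tau A_V G_\tau = A + M_\tau/\tau$ with
\[
M_\tau := \tau VB + \sqrt{\tau}\begin{pmatrix} \phi & 0 \\ 0 & -\phi \end{pmatrix} - \phi^2 B.
\]
In the notation of \eqref{def:M}, $m_1^\tau = \sqrt\tau\phi$, $m_2^\tau = \tau V - \phi^2$, $m_3^\tau = -\sqrt\tau\phi$; the family is bounded in $W^{1,\infty}\times L^\rho\times L^\infty$ (using $\phi^2 \in L^\infty \subset L^\rho$) and $M_\tau \to M_0 := -\phi^2 B$ in operator norm. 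Since $\phi^2 \in W^{1,\infty}$, \Cref{Lem:M}.2 also ensures that $e^{-t\phi^2 B}$ is bounded on $H^2 \times H^1$.

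It only remains to justify the identity $e^{-G_\tau}e^{\tau A_V} e^{G_\tau} = \exp(\tau A + M_\tau)$. For $W_0 \in H^2 \times H^1$, both sides satisfy, after the rescaling $s = t/\tau$, the Cauchy problem $\partial_s U = (A + M_\tau/\tau) U$ with $U(0) = W_0$: the right-hand side by \Cref{Lem:A+L}, and the left-hand side by differentiating the composition, using that $e^{\pm G_\tau}$ preserves $H^2 \times H^1$ (\Cref{Lem:M}.2) and that the $C_0$-group $e^{tA_V}$ preserves its generator's domain $D(A_V) = H^2 \times H^1$ (bounded-perturbation theory, since $VB$ is bounded by \Cref{Lem:M}.1). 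Uniqueness and the density of $H^2 \times H^1$ in $H^1 \times L^2$ then extend the identity to the whole phase space, and \Cref{Lem:strongCV} closes the argument. The most delicate step is precisely this last one: the unbounded drift $A_V$ is sandwiched between the bounded conjugators $e^{\pm G_\tau}$, forcing a careful domain-and-density argument to rigorously identify the conjugated generator.
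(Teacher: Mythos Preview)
Your proof is correct and follows the same overall strategy as the paper: for each item you recast the operator as $\exp(\tau A + M_\tau)$ with a lower-triangular $M_\tau$, check the hypotheses of \Cref{Lem:strongCV}, and conclude. Your identification $M_\tau=\begin{pmatrix}\sqrt{\tau}\phi & 0\\ \tau V-\phi^2 & -\sqrt{\tau}\phi\end{pmatrix}$ is exactly the paper's (up to a harmless sign typo in the paper's displayed matrix), obtained there by direct computation of the time derivative of the components of $e^{-\gamma B}e^{tA_V}e^{\gamma B}W_0$ rather than by your commutator algebra on generators.

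The only substantive difference is how the conjugation identity $e^{-G_\tau}e^{\tau A_V}e^{G_\tau}=\exp(\tau A+M_\tau)$ is justified. The paper works directly at the level $W_0\in H^1\times L^2$: since $\frac{d}{dt}(w,z)=A_V(w,z)$ holds in $L^2\times H^{-1}$ by \Cref{Lem:A+L}, the same is true for the explicit linear combination $(w,z-\gamma w)$, and uniqueness in \Cref{Lem:A+L} gives the identity without any density argument. Your route---compute $e^{-G_\tau}A_Ve^{G_\tau}$ as an operator on $D(A_V)=H^2\times H^1$, use that both $e^{\pm G_\tau}$ (via \Cref{Lem:M}.2) and the $C_0$-group $e^{tA_V}$ preserve this domain, conclude on the dense subspace and extend by boundedness---is perfectly valid but slightly heavier, since it imports the extra fact that the semigroup preserves $D(A_V)$. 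Both approaches arrive at the same $M_\tau$ and the same application of \Cref{Lem:strongCV}.
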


We refer to \Cref{appendix:brackets} for an intuition, relying on Lie brackets, of these results. The first convergence actually holds as soon as $\phi \in L^{\rho}(\T^d)$ but we will not use it. These results are also proved in \cite{pozzoli}, but we propose here another proof strategy.

\begin{proof}
\noindent $\bullet$ We apply \Cref{Lem:strongCV} with $M_{\tau}=(\tau V+\phi) B$. By \Cref{Lem:M}, we have $\| M_{\tau}-M_0\| \leq C \|\tau V \|_{L^{\rho}} \to 0$ as $\tau \to 0$, i.e. $M_{\tau}-M_0$ converges to zero in $\mathcal{L}(H^1 \times L^2)$, thus $M_{\tau}$ strongly converges towards $M_0$ on $H^1 \times L^2(\T^d)$.
Moreover, by \Cref{Lem:M}, $(e^{t\phi B})_{t\in\mathbb{R}}$ is a group of bounded operators on $H^2 \times H^1(\T^d)$. Finally, by \Cref{Lem:strongCV}, $\exp(\tau A+M_{\tau})$ strongly converges towards $\exp(M_0)$ on $H^1 \times L^2$ when $\tau \to 0$, which gives the conclusion.   

\medskip

\noindent $\bullet$ We first prove that, for every $\gamma \in W^{1,\infty}(\T^d)$ and $t\in\R$,
\begin{equation} \label{formule1}
\exp(-\gamma B) \exp(t A_V) \exp( \gamma B) = \exp\big(t(A_V+\mathcal{M})\big) 
\quad \text{ where } \quad \mathcal{M}=\begin{pmatrix}
\gamma & 0 \\ -\gamma^2 & -\gamma
\end{pmatrix}
\end{equation}
(note that the coefficients of $VB+\mathcal{M}$ satisfy the assumptions of \Cref{Lem:M}).
Thanks to \eqref{def:B}, we obtain $\exp(\pm \gamma B)=I\pm\gamma B$, thus,
for $W_0=(w_1^0,w_2^0) \in H^1 \times L^2(\T^d)$, we have
$$\exp(-\gamma B) \exp(t A_V) \exp( \gamma B) W_0 =
\begin{pmatrix}
    w(t) \\ z(t)-\gamma w(t)
\end{pmatrix}
\quad \text{ where } \quad
\begin{pmatrix}
    w(t) \\ z(t)
\end{pmatrix}
= e^{t A_V} \begin{pmatrix}
    w_1^0 \\ w_2^0+\gamma w_1^0
\end{pmatrix}.$$
We have 
$$\frac{d}{dt} 
\begin{pmatrix}
    w \\ z-\gamma w
\end{pmatrix}
=
\begin{pmatrix}
    z \\ (\Delta+V)w-\gamma z
\end{pmatrix}
=\begin{pmatrix}
   \gamma w + (z-\gamma w) \\ (\Delta+V-\gamma^2)w-\gamma (z-\gamma w)
\end{pmatrix}
= (A_V+\mathcal{M}) \begin{pmatrix}
    w \\ z-\gamma w
\end{pmatrix},$$
and $(w,z-\gamma w)(0)=(w_1^0,w_2^0)$, thus
$$\begin{pmatrix}
    w(t) \\ z(t)-\gamma w(t)
\end{pmatrix}
= \exp \big(t(A_V+\mathcal{M}) \big) W_0$$
which proves \eqref{formule1}. Now, by applying \eqref{formule1} with $(t,\gamma) \leftarrow (\tau,\phi/\sqrt{\tau})$, we obtain
$$\exp \left(-\frac{\phi}{\sqrt{\tau}}B \right) \exp(\tau A_V) \exp \left( \frac{\phi}{\sqrt{\tau}} \right) 
= \exp(\tau A + M_{\tau}) 
\quad \text{ where } \quad
M_{\tau}=\begin{pmatrix}
\sqrt{\tau} \phi &  0 \\  \tau V  - \phi^2 & \sqrt{\tau} \phi
\end{pmatrix}.$$
We apply \Cref{Lem:strongCV}.
By \Cref{Lem:M},
$\|M_{\tau}-M_0\| \leq C ( \sqrt{\tau}\|\phi\|_{W^{1,\infty}} + \tau \|V\|_{L^{\rho}} ) \to 0$ as $\tau \to 0$, i.e. $M_{\tau}-M_0$ converges to zero in $\mathcal{L}(H^1 \times L^2)$ thus $M_{\tau}$ strongly converges towards $M_0$ on  $H^1 \times L^2(\T^d)$. Since $\phi^2 \in W^{1,\infty}(\T^d)$, by \Cref{Lem:M}, $(e^{tM_0})_{t\in\R}$ is a group of bounded operators on $H^2 \times H^1$. Finally, by \Cref{Lem:strongCV}, $\exp(\tau A+M_{\tau})$ strongly converges towards $\exp(M_0)$ on $H^1 \times L^2$ when $\tau \to 0$, which gives the conclusion.   
\end{proof}

\subsection{Saturation argument}
\label{subsec:Saturation}

In this section, we formalize the small-time transitions \eqref{exp(phiB)_target}, via \Cref{lem_psi}.

\begin{proposition} \label{Lem:psi_init}
    For every $\phi \in \text{Span}\{V_0,\dots,V_{2d}\}$, the operator
    $e^{\phi B}$ is $H^1 \times L^2$-STAR.
\end{proposition}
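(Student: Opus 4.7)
The goal is to realize $e^{\phi B}W_0$ in arbitrarily short time using piecewise constant controls. Since $\phi$ lies in the span of the available potentials $V_0,\dots,V_{2d}$, we can write
$$\phi = \sum_{j=0}^{2d} c_j V_j$$
for some real coefficients $c_0,\dots,c_{2d}$. The idea is then to employ \emph{large but short} constant controls whose time-integral matches the coefficients of $\phi$, and to conclude via the first strong convergence in Proposition~\ref{Cor:strongCV}.

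Concretely, given $W_0 \in H^1\times L^2(\T^d)$ and $\epsilon>0$, the plan is to pick $\tau\in(0,\epsilon]$ to be determined, and to apply on $[0,\tau]$ the constant control $u^{\tau}=(c_0/\tau,\dots,c_{2d}/\tau)$. This is piecewise constant, so Proposition~\ref{Prop:WP} applies and the solution at time $\tau$ is exactly
$$W(\tau;u^{\tau},W_0) = \exp\!\left(\tau\Big(A_V + \tfrac{\phi}{\tau}B\Big)\right) W_0,$$
where $A_V=A+VB$ (observe that $\sum_j (c_j/\tau) V_j B = (\phi/\tau)B$). Since $\phi$ is a trigonometric polynomial, in particular $\phi\in W^{1,\infty}(\T^d)$, the first convergence in Proposition~\ref{Cor:strongCV} yields
$$\exp\!\left(\tau\Big(A_V + \tfrac{\phi}{\tau}B\Big)\right) W_0 \xrightarrow[\tau\to 0^+]{H^1\times L^2} e^{\phi B} W_0.$$

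Therefore, we may choose $\tau\in(0,\epsilon]$ small enough so that
$$\bigl\| W(\tau;u^{\tau},W_0) - e^{\phi B}W_0 \bigr\|_{H^1\times L^2} < \epsilon,$$
which is precisely the $H^1\times L^2$-STAR property from Definition~\ref{def:STARop} for the operator $e^{\phi B}$. There is no real obstacle: the entire argument reduces to applying Proposition~\ref{Cor:strongCV} pointwise at the given initial datum $W_0$, and the regularity $\phi\in W^{1,\infty}(\T^d)$ ensured by $\phi\in\mathrm{Span}\{V_0,\dots,V_{2d}\}$ is exactly what that proposition requires.
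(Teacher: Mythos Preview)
Your proof is correct and follows essentially the same approach as the paper: write $\phi=\sum_j c_j V_j$, use the constant control $u=c/\tau$ on $[0,\tau]$ to realize $\exp\!\big(\tau(A_V+\tfrac{\phi}{\tau}B)\big)$, and apply the first strong convergence of Proposition~\ref{Cor:strongCV}. The only cosmetic difference is that the paper packages the final step through Lemma~\ref{Lem:STAR} (strong limits of STAR operators are STAR), whereas you verify Definition~\ref{def:STARop} directly at the given $W_0$.
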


\begin{proof}
Let $\alpha \in \R^{2d+1}$ and $\phi=\alpha_0 V_0 + \dots + \alpha_{2d} V_{2d}$. For every $\tau>0$, the operator 
$\exp\big(\tau (A_V + \frac{\phi}{\tau}B) \big)$ is exactly reachable in time $\tau$ with the constant control $u=\alpha/\tau$. By \Cref{Cor:strongCV}, these operators strongly converge towards $e^{\phi B}$ on $H^1 \times L^2$. By \Cref{Lem:STAR}, $e^{\phi B}$ is $H^1 \times L^2$-STAR.
\end{proof}

\begin{proposition} \label{Lem:psi_iter}
Let $\phi \in W^{1,\infty}(\T^d)$.
We assume that, for every $\lambda \in \R$, the operator $e^{\lambda \phi B}$ is $H^1 \times L^2$-STAR. Then the operator $e^{-\phi^2 B}$ is $H^1 \times L^2$-STAR.
\end{proposition}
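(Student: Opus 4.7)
The plan is to realize $e^{-\phi^2 B}$ as a strong limit of compositions of STAR operators, and then invoke \Cref{Lem:STAR}. The right family comes directly from the second identity of \Cref{Cor:strongCV}: for each $\tau > 0$, define
$$L_\tau := \exp\!\left(-\tfrac{\phi}{\sqrt{\tau}}B\right) \exp(\tau A_V) \exp\!\left(\tfrac{\phi}{\sqrt{\tau}}B\right),$$
so that $L_\tau \to e^{-\phi^2 B}$ strongly on $H^1 \times L^2$ as $\tau \to 0^+$.

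Each $L_\tau$ is a composition of three factors. The two outer factors $e^{\pm(\phi/\sqrt{\tau})B}$ are STAR by the hypothesis of the lemma, applied with $\lambda = \pm 1/\sqrt{\tau}$. The middle factor $e^{\tau A_V}$ is reached exactly in time $\tau$ by the zero control $u \equiv 0$. Following the same template as the proof of \Cref{Lem:psi_init}, where an operator exactly reachable in a time $\tau$ was combined with \Cref{Lem:STAR} via the limit $\tau \to 0^+$, the composition clause of \Cref{Lem:STAR} gives that $L_\tau$ is STAR, and the strong-limit clause then yields that $e^{-\phi^2 B}$ is STAR. Note that $\phi^2 \in W^{1,\infty}(\T^d)$, so the limit operator is indeed bounded on $H^1 \times L^2$ by \Cref{Lem:M}, which is needed for the statement to make sense.

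The main subtle point is technical rather than conceptual. For fixed $\tau$, the outer factor $e^{-(\phi/\sqrt{\tau})B}$ has operator norm of order $1/\sqrt{\tau}$, since $(\phi B)^2 = 0$ gives $e^{\lambda \phi B} = I + \lambda \phi B$ whose norm grows linearly in $|\lambda|$. Consequently, an approximation error $\delta_1$ in the first STAR step is amplified by a factor $\sim 1/\sqrt{\tau}$ after the last step. The resolution is to fix $\tau$ first, according to the desired final accuracy $\epsilon$, via the strong convergence $L_\tau \to e^{-\phi^2 B}$; once $\tau$ is fixed the amplification is a finite constant, and the STAR hypothesis permits one to take the intermediate approximation errors as small as needed to bring the total error below $\epsilon$. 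This triangle-inequality bookkeeping is precisely what the composition statement in \Cref{Lem:STAR} encodes, so no new argument beyond the abstract closure properties of STAR operators is required here.
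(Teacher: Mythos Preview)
Your approach is essentially identical to the paper's: form $L_\tau = e^{-(\phi/\sqrt{\tau})B}\, e^{\tau A_V}\, e^{(\phi/\sqrt{\tau})B}$, use the hypothesis on the outer factors and the exact reachability of $e^{\tau A_V}$ in time $\tau$, then pass to the strong limit via \Cref{Cor:strongCV}. One small slip: you write that ``the composition clause of \Cref{Lem:STAR} gives that $L_\tau$ is STAR,'' but for fixed $\tau$ the middle factor $e^{\tau A_V}$ is only reachable in time $\tau$, not in arbitrarily small time, so $L_\tau$ is merely approximately reachable in time $\tau^+$; the correct conclusion (which you in fact describe in your final paragraph) is to let $\tau \to 0$ simultaneously with the strong convergence, exactly as the paper does.
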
 

\begin{proof}
By \Cref{Lem:STAR}, for every $\tau>0$, the operator 
$\exp(-\frac{\phi}{\sqrt{\tau}}B) \exp(\tau A_V) \exp( \frac{\phi}{\sqrt{\tau}})$ is $H^1 \times L^2$-approximately reachable in time $\tau^+$. By \Cref{Cor:strongCV}, these operators strongly converge towards $\exp(-\phi^2 B)$ on $H^1 \times L^2(\T^d)$ as $\tau \to 0$. By \Cref{Lem:STAR}, $\exp(-\phi^2 B)$ is $H^1 \times L^2$-STAR.
\end{proof}

\begin{proposition} \label{Lem:H}
Let $(\mathcal{H}_{\ell})_{\ell\in\mathbb{N}}$ be the increasing sequence of subspaces of $W^{1,\infty}(\T^d)$ defined in the following way:
\begin{itemize}[topsep=0cm, parsep=0cm, itemsep=0cm]
    \item $\mathcal{H}_0=\text{Span}\{V_0,\dots,V_{2d}\}$,
    \item for every $\ell \in \mathbb{N}^*$, $\mathcal{H}_{\ell}$ is the largest vector subspace whose elements $\phi$ can be written as
    \begin{equation} \label{dec:phi}
    \phi=\phi_0 - \sum_{i=1}^n \phi_i^2 \quad  \text{ where }  \quad
    n \in\mathbb{N}, \phi_0,\dots,\phi_n \in\mathcal{H}_{\ell-1}.
    \end{equation}
\end{itemize}
   Then $\mathcal{H}_{\infty}:=\cup_{\ell \in\mathbb{N}} \mathcal{H}_{\ell}$ 
   contains any trigonometric polynomial. Moreover, for every $\phi \in \mathcal{H}_{\infty}$, the operator $e^{\phi B}$ is $H^1 \times L^2$-STAR.
\end{proposition}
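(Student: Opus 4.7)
The plan is to establish both claims of \Cref{Lem:H} by separate inductions, relying on one elementary fact: taking $\phi_0 = 0$, $n = 1$ in \eqref{dec:phi} gives $-\phi^2 \in \mathcal{H}_\ell$ for every $\phi \in \mathcal{H}_{\ell-1}$, and then by negation $+\phi^2 \in \mathcal{H}_\ell$ as well. Combined with the polarization identity $2fg = (f+g)^2 - f^2 - g^2$, this yields the key product property $\mathcal{H}_{\ell-1} \cdot \mathcal{H}_{\ell-1} \subset \mathcal{H}_\ell$. Note also that $\mathcal{H}_\ell \subset W^{1,\infty}(\T^d)$ holds inductively, since $W^{1,\infty}(\T^d)$ is an algebra.

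For the density claim, I would show by induction on $N \geq 1$ that $\sin(n\cdot x), \cos(n\cdot x) \in \mathcal{H}_{N-1}$ for every $n \in \mathbb{Z}^d$ with $|n|_1 := \sum_j |n_j| \leq N$. The case $N = 1$ follows directly from the definition of $\mathcal{H}_0$. For the inductive step, given $n$ with $|n|_1 = N+1$, decompose $n = m + \varepsilon e_j$ with $|m|_1 = N$ and $\varepsilon \in \{\pm 1\}$, and apply the product-to-sum identities
\begin{align*}
\cos(n \cdot x) &= \cos(m\cdot x)\cos(x_j) - \varepsilon \sin(m\cdot x)\sin(x_j), \\
\sin(n \cdot x) &= \sin(m\cdot x)\cos(x_j) + \varepsilon \cos(m\cdot x)\sin(x_j).
\end{align*}
By the inductive hypothesis $\sin(m\cdot x), \cos(m\cdot x) \in \mathcal{H}_{N-1}$ and $\sin(x_j), \cos(x_j) \in \mathcal{H}_0 \subset \mathcal{H}_{N-1}$, so the product property above places each term in $\mathcal{H}_N$. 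By linearity, every trigonometric polynomial lies in some $\mathcal{H}_\ell \subset \mathcal{H}_\infty$.

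For the STAR claim, I would induct on the smallest $\ell$ such that $\phi \in \mathcal{H}_\ell$. The base case $\ell = 0$ is exactly \Cref{Lem:psi_init}. For the inductive step, write $\phi = \phi_0 - \sum_{i=1}^n \phi_i^2$ with $\phi_0, \ldots, \phi_n \in \mathcal{H}_{\ell-1}$. Since $\mathcal{H}_{\ell-1}$ is a vector space, $\lambda \phi_i \in \mathcal{H}_{\ell-1}$ for every $\lambda \in \R$, so by the inductive hypothesis $e^{\lambda \phi_i B}$ is $H^1 \times L^2$-STAR for all $\lambda \in \R$; \Cref{Lem:psi_iter} then yields that $e^{-\phi_i^2 B}$ is STAR, and $e^{\phi_0 B}$ is STAR by induction. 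Using the nilpotency $B^2 = 0$, for any $a, b \in W^{1,\infty}(\T^d)$ we have $e^{aB} e^{bB} = (I+aB)(I+bB) = I + (a+b)B = e^{(a+b)B}$, and iterating gives
\[ e^{\phi B} = e^{\phi_0 B}\, e^{-\phi_1^2 B} \cdots e^{-\phi_n^2 B}. \]
\Cref{Lem:STAR} (closure of STAR operators under composition) then gives the conclusion.

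I do not expect a substantial obstacle: all the analytic work is packaged in \Cref{Lem:psi_init} and \Cref{Lem:psi_iter}, and what remains is the saturation bookkeeping, with the neat cancellation $B^2 = 0$ making the operator-level composition match the algebraic decomposition $\phi = \phi_0 - \sum \phi_i^2$ exactly. The only mild point of care is keeping track of the $W^{1,\infty}$ regularity along the induction, so that \Cref{Lem:psi_iter} may be applied at each stage.
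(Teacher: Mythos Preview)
Your Step~2 (the STAR claim) is correct and essentially identical to the paper's argument; the explicit verification $e^{aB}e^{bB}=e^{(a+b)B}$ via $B^2=0$ is a nice touch.

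Your Step~1 has a genuine gap. The definition says $\mathcal{H}_\ell$ is the \emph{largest vector subspace} whose elements admit a representation \eqref{dec:phi}; in other words, if $S_\ell := \{\phi_0 - \sum_i \phi_i^2 : \phi_i \in \mathcal{H}_{\ell-1}\}$, then $\mathcal{H}_\ell = S_\ell \cap (-S_\ell)$, since $S_\ell$ is a convex cone (closed under addition and nonnegative scaling) but not a priori under negation. Writing $-\phi^2$ in the form \eqref{dec:phi} therefore only gives $-\phi^2 \in S_\ell$, not $-\phi^2 \in \mathcal{H}_\ell$; your ``by negation'' step is then circular, and the product property $\mathcal{H}_{\ell-1}\cdot\mathcal{H}_{\ell-1}\subset\mathcal{H}_\ell$ is unproved. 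The paper closes this gap by exhibiting \emph{both} signs explicitly in the form \eqref{dec:phi}, via identities such as
\[
\pm\cos(k\cdot x + x_j) \;=\; 1 - \tfrac{1}{2}\bigl(\cos(k\cdot x)\mp\cos x_j\bigr)^2 - \tfrac{1}{2}\bigl(\sin(k\cdot x)\pm\sin x_j\bigr)^2,
\]
which crucially exploit $\sin^2+\cos^2=1$ to cancel the unwanted squared terms against the constant $1\in\mathcal{H}_{\ell-1}$. This yields $\pm\cos(n\cdot x)\in S_\ell$ directly, hence $\cos(n\cdot x)\in\mathcal{H}_\ell$, and the induction runs. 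Your product-to-sum route can be repaired in the same spirit, but not by the abstract polarization argument you gave.
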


\begin{proof}
\emph{Step 1: We prove by induction on $\ell \in \mathbb{N}^*$ that for every $n \in \mathbb{N}^d$ with $|n| \leq \ell$ then $\cos(n\cdot x),\sin(n\cdot x) \in \mathcal{H}_{\ell-1}$.} The initialization for $\ell=1$ results from the definition of $\mathcal{H}_0$ and \eqref{eq:cos-sin_bis}. Let $\ell \in \mathbb{N}^*$ for which the property holds.
Let $n \in \mathbb{N}^d$ be such that $|n|=\ell+1$. Then $n=k+e_j$ for some $j \in \{1,\dots,d\}$ and $k \in \mathbb{N}^d$ with $|k|=\ell$. By induction the functions $1, \cos(x_j), \sin(x_j), \cos(k\cdot x), \sin(k\cdot x)$  belong to $\mathcal{H}_{\ell-1}$. The following trigonometric formula prove that the functions $\cos(n\cdot x), \sin(n\cdot x)$ belong to $\mathcal{H}_{\ell+1}$,
$$\begin{aligned}
\pm \cos(n\cdot x) & =\pm \cos(k\cdot x + x_j) =
1- \frac{1}{2} \left(\cos(k\cdot x)\mp\cos(x_j) \right)^2
- \frac{1}{2} \left(\sin(k\cdot x)\pm\sin(x_j) \right)^2,
\\
\pm \sin(n\cdot x) & =\pm \sin(k\cdot x + x_j )=1
-\frac{1}{2} \left(\sin(k\cdot x) \mp \cos(x_j) \right)^2 
-\frac{1}{2} \left(\cos(k\cdot x) \mp \sin(x_j) \right)^2.
\end{aligned}$$
\noindent \emph{Step 2: We prove by induction on $\ell \in \mathbb{N}$ that for every $\phi \in \mathcal{H}_{\ell}$, the operator $e^{\phi B}$ is $H^1 \times L^2$-STAR.} The initialization for $\ell=0$ is given by \Cref{Lem:psi_init}. Let $\ell \in \mathbb{N}^*$ and $\phi \in \mathcal{H}_{\ell}$. Then $\phi=\phi_0 - \sum_{i=1}^n \phi_i^2$ where $n \in\mathbb{N}$ and $\phi_0,\dots,\phi_n\in\mathcal{H}_{\ell-1}$.  For every $\lambda \in \mathbb{R}$ and $i \in \{0,\dots,n\}$, we have $\lambda \phi_i \in \mathcal{H}_{\ell-1}$ (because $\mathcal{H}_{\ell-1}$ is a vector space)
thus, by induction hypothesis, the operator $e^{\lambda \phi_i B}$ is $H^1 \times L^2$-STAR, therefore, by \Cref{Lem:psi_iter}, the operator $e^{-\phi_i^2 B}$ is $H^1 \times L^2$-STAR. Finally 
$e^{\phi B}=e^{\phi_0 B} e^{-\phi_1^2 B} \dots e^{-\phi_n^2 B}$
is a composition of $H^1 \times L^2$-STAR operators, thus an $H^1 \times L^2$-STAR operator by \Cref{Lem:STAR}.
\end{proof}

\begin{proposition} \label{lem_psi}
Let $\rho$ be as in \eqref{def:q_rho}.
For every $\phi \in L^{\rho}(\T^d)$, the operator $e^{\phi B}$ is $H^1 \times L^2$-STAR. 
\end{proposition}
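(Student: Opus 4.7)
The plan is to conclude by a density argument, combining Proposition \ref{Lem:H} (which already handles trigonometric polynomials) with the strong-limit closure property of STAR operators from Lemma \ref{Lem:STAR}.

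First, given $\phi \in L^{\rho}(\T^d)$, I would select a sequence $(\phi_n)_{n \in \N}$ of trigonometric polynomials converging to $\phi$ in $L^{\rho}(\T^d)$; this is classical since trigonometric polynomials are dense in $L^{\rho}$ for every finite $\rho \geq 1$ (and for $\rho = 2^+$ it suffices to pick any fixed $\rho \in (2,\infty)$, since $L^\rho(\T^d) \subset L^{2^+}(\T^d)$ with continuous injection). By Proposition \ref{Lem:H}, each $e^{\phi_n B}$ is $H^1 \times L^2$-STAR.

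Next I would establish that $e^{\phi_n B}$ converges to $e^{\phi B}$ strongly (in fact in operator norm) on $H^1 \times L^2(\T^d)$. Using the explicit expression \eqref{exp(phiB)}, for every $W_0 = (w_1^0,w_2^0) \in H^1 \times L^2(\T^d)$,
\begin{equation*}
(e^{\phi_n B} - e^{\phi B}) W_0 = \begin{pmatrix} 0 \\ (\phi_n - \phi)\, w_1^0 \end{pmatrix}.
\end{equation*}
By Lemma \ref{Lem:Sob}.1, the multiplication map $w \mapsto (\phi_n-\phi) w$ is bounded from $H^1(\T^d)$ to $L^2(\T^d)$, with norm controlled by $C\|\phi_n - \phi\|_{L^{\rho}}$. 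Thus
\begin{equation*}
\| (e^{\phi_n B} - e^{\phi B}) W_0 \|_{H^1 \times L^2}
\leq C \,\|\phi_n - \phi\|_{L^{\rho}} \, \|W_0\|_{H^1 \times L^2},
\end{equation*}
so that the operators $e^{\phi_n B}$ converge to $e^{\phi B}$ in the operator norm $\|\cdot\|$ defined in \eqref{opnormalpha}, and therefore strongly.

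Finally, Lemma \ref{Lem:STAR} asserts that strong limits of $H^1 \times L^2$-STAR operators are $H^1 \times L^2$-STAR, which yields the conclusion. The proof is essentially routine once Proposition \ref{Lem:H} is in place; the only point requiring care is the use of Lemma \ref{Lem:Sob}.1 to ensure that multiplication by an $L^{\rho}$ function is genuinely a bounded map from $H^1$ into $L^2$ (which is where the dimensional threshold in the definition \eqref{def:q_rho} of $\rho$ enters crucially).
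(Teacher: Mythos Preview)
Your proof is correct and follows essentially the same route as the paper: approximate $\phi$ in $L^{\rho}$ by trigonometric polynomials (the paper invokes Fej\'er's theorem), use Proposition~\ref{Lem:H} to get that each $e^{\phi_n B}$ is STAR, observe that $e^{\phi_n B}-e^{\phi B}=(\phi_n-\phi)B$ tends to zero in operator norm via the $H^1\to L^2$ multiplication bound (the paper cites Proposition~\ref{Lem:M}, you cite Lemma~\ref{Lem:Sob}.1, which is the same content), and conclude with Lemma~\ref{Lem:STAR}.
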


\begin{proof}
By Fejer's theorem, there exists a sequence $(\phi_n)_{n\in\mathbb{N}}$ of trigonometric polynomials such that $\|\phi-\phi_n \|_{L^{\rho}} \to 0$ as $n \to \infty$. By \Cref{Lem:H}, for every $n \in \mathbb{N}$, $\phi_n \in \mathcal{H}_{\infty}$ thus the operator $e^{\phi_n B}$ is $H^1 \times L^2$-STAR. By \Cref{Lem:M},
$e^{\phi_n B}-e^{\phi B} = (\phi_n-\phi)B$ converges towards $0$ in  $\mathcal{L}(H^1 \times L^2)$, thus $e^{\phi_n B}$ strongly converges towards $e^{\phi B}$ on $H^1 \times L^2(\T^d)$. By \Cref{Lem:STAR}, $e^{\phi B}$ is $H^1 \times L^2$-STAR.
\end{proof}

\subsection{Density criterium}
\label{subsec:DenseCrit}

\begin{proposition} \label{Lem:dense}
Let $w_0 \in H^1(\T^d)$ and $\rho$ be as in \eqref{def:q_rho}.
\begin{enumerate}[topsep=0cm, parsep=0cm, itemsep=0cm]
 \item The subspace $\mathcal{F}:=\{f \in L^2(\T^d);f=0 \text{ a.e. on } Z(w_0) \}$
 is closed in $L^2(\T^d)$.
 \item The subspace $\{\phi w_0 ; \phi \in L^{\rho}(\T^d) \}$ is dense in $(\mathcal{F},\|.\|_{L^2})$. 
 \item If $|Z(w_0)|=0$ then $\{\phi w_0 ; \phi \in L^{\rho}(\T^d) \}$ is dense in $L^2(\T^d)$.
\end{enumerate}  
\end{proposition}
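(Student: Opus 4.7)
The three statements are of increasing depth, with statement 3 an immediate corollary of statement 2. For statement 1, I would simply observe that $\mathcal{F}$ is the kernel of the bounded linear operator $R : L^2(\T^d) \to L^2(\T^d)$, $Rf := \chi_{Z(w_0)} f$ (multiplication by the characteristic function of $Z(w_0)$, which is in $L^\infty$ and hence defines a bounded endomorphism of $L^2$). Kernels of bounded linear maps are closed, which gives the result.

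For statement 2, the key step is a double truncation. Let $f \in \mathcal{F}$ and, for $n, m \in \N^*$, define
\[
\phi_{n,m}(x) := \begin{cases} f(x)/w_0(x) & \text{if } |w_0(x)| > 1/n \text{ and } |f(x)| \leq m, \\ 0 & \text{otherwise.} \end{cases}
\]
The pointwise bound $|\phi_{n,m}| \leq nm$ shows that $\phi_{n,m} \in L^\infty(\T^d) \subset L^\rho(\T^d)$ (since $\T^d$ has finite measure). Moreover,
\[
\phi_{n,m}\, w_0 = f \cdot \chi_{\{|w_0| > 1/n\}} \cdot \chi_{\{|f| \leq m\}}.
\]
Letting $m \to \infty$ with $n$ fixed, dominated convergence (dominant $|f|^2 \in L^1$) yields $\phi_{n,m} w_0 \to f \cdot \chi_{\{|w_0| > 1/n\}}$ in $L^2(\T^d)$. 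Letting then $n \to \infty$, the hypothesis $f \in \mathcal{F}$ (i.e., $f = 0$ a.e. on $\{w_0 = 0\}$) gives $f \cdot \chi_{\{|w_0| > 1/n\}} \to f$ a.e., hence in $L^2$ by dominated convergence once more. A diagonal extraction then produces a sequence $(\phi_k)_{k\in\N}$ in $L^\rho(\T^d)$ with $\phi_k w_0 \to f$ in $L^2$.

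Statement 3 follows at once: if $|Z(w_0)| = 0$, every $f \in L^2(\T^d)$ vanishes a.e. on $Z(w_0)$ vacuously, so $\mathcal{F} = L^2(\T^d)$, and statement 2 applies. The only mild subtlety in this plan lies in statement 2: a naive definition $\phi = f/w_0$ (or a single truncation only on $w_0$) would at best yield $\phi \in L^2(\T^d)$, which is insufficient when $\rho > 2$ (as happens for $d \geq 3$, where $\rho = d$). The second truncation, which caps $|f|$ pointwise, forces $L^\infty$-control and hence $L^\rho$-control on the compact torus. I do not foresee any further obstacle.
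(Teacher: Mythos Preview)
Your proof is correct. All three statements are handled soundly: the kernel argument for closedness, the double truncation for density, and the trivial deduction of statement 3 from statement 2 are each valid. The remark about diagonal extraction is slightly heavier than necessary (given $\epsilon>0$ you can simply pick $n$ large, then $m$ large), but it does no harm.

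The paper takes a different route for statement 2. Rather than constructing approximants, it argues by orthogonality: if $f\in\mathcal{F}$ satisfies $\int_{\T^d} f w_0 \phi = 0$ for every $\phi\in L^\rho(\T^d)$, then $fw_0=0$ in $L^{\rho'}(\T^d)$, whence $f=0$ a.e.\ on $\{w_0\neq 0\}$; combined with $f\in\mathcal{F}$ this forces $f=0$. This duality argument is shorter, but your constructive approach has the advantage of exhibiting explicit approximants and of avoiding the (easy but unstated) verification that $fw_0\in L^{\rho'}$. For statement 1 the paper uses a direct sequential argument ($\int_{Z(w_0)}|f|^2 = \int_{Z(w_0)}|f-f_n|^2 \to 0$), which is equivalent to your kernel observation. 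Statement 3 is handled identically.
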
  

\begin{proof}
\emph{1.} If $(f_n)_{n \in \mathbb{N}} \subset \mathcal{F}$, $f \in L^2(\T^d)$ and $\|f_n-f\|_{L^2} \to 0$ then   
$$\int_{Z(w_0)} |f|^2 = \int_{Z(w_0)} |f-f_n|^2 \leq \|f-f_n\|_{L^2} \to 0.$$
\noindent \emph{2.} It is equivalent to prove that its orthogonal is $=\{0\}$. Let $f \in \mathcal{F}$ be such that, for every $\phi \in L^{\rho}(\T^d)$,
$\int_{\T^d} f w_0 \phi=0$. Then $fw_0=0$ in $L^{\rho'}(\T^d)$, thus $f=0$ a.e. on $\T^d \setminus Z(w_0)$. Since $f=0$ a.e. on $Z(w_0)$
 then $f=0$. 
 \\
\noindent \emph{3.} If $|Z(w_0)|=0$ then $\mathcal{F}=L^2(\T^d)$ thus the second statement gives the conclusion.
\end{proof}

\subsection{Proof of \Cref{Prop:STAC_vel}}
\label{subsec:Proof_Vel}

Let $W_0=(w_0,\dot{w}_0)\in H^1 \times  L^2(\T^d)$ and $\dot{w}_f \in L^2(\T^d)$. We assume that $\dot{w}_f-\dot{w}_0$ vanishes a.e. on $Z(w_0)$. Let $\epsilon>0$ and $W_f=(w_0,\dot{w}_f)$.
With the notations of \Cref{Lem:dense}, we have $\dot{w}_f-\dot{w}_{0}  \in \mathcal{F}$ thus there exists $\phi \in L^{\rho}(\T^d)$ such that $\|\dot{w}_f-\dot{w}_0-\phi w_0\|<\frac{\epsilon}{2}$. Let $W_1:=(w_0,\dot{w}_0+\phi w_0)$. Since $W_1=e^{\phi B}W_0$, \Cref{lem_psi} provides $T \in (0,\epsilon)$ and $u \in PWC([0,T],\R^{2d+1})$ such that $\|W(T;u,W_0)-W_1\|<\frac{\epsilon}{2}$. Finally 
$\|W(T;u,W_0)-W_f\| \leq \|W(T;u,W_0)-W_1\| + \| W_1-W_f \| < \frac{\epsilon}{2} + \|\dot{w}_f-\dot{w}_0-\phi w_0\|_{L^2} <  \epsilon$.

\section{Small-time transfer of the velocity into the profile} \label{sec:vel->prof}

The goal of this section is to formalize the small time transitions \eqref{exp(aB*)_target} via the following statement.

\begin{proposition} \label{Prop:STAR_fin}
For every $W_0 \in H^1 \times H^1(\T^d)$ and $a \in (0,\infty)$ the target $e^{aB^*}W_0$ is small-time $H^1\times L^2$-approximately reachable.
\end{proposition}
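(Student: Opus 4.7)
The goal is to realize $e^{aB^*}:(w_0,\dot w_0)\mapsto(w_0+a\dot w_0,\dot w_0)$ as a strong limit on $H^1\times L^2$ of STAR-accessible operators, applied to initial data in $H^1\times H^1$. The natural starting point is the commutator identity from the introduction: with $F:=[B,A_V]=\begin{pmatrix}-I&0\\0&I\end{pmatrix}$ and the bracket relations $[F,B^*]=-2B^*$, $[F,B]=2B$, $[B,B^*]=F$ (so that $\{B,B^*,F\}$ spans a copy of $\mathfrak{sl}_2$), a direct matrix computation yields $e^{sF}A_V e^{-sF}=e^{-2s}B^*+e^{2s}(\Delta-1+V)B$. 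Choosing $s_\tau:=\tfrac12\log(\tau/a)$, one obtains
\[
\tau\cdot e^{s_\tau F}A_V e^{-s_\tau F}=aB^*+\frac{\tau^2}{a}(\Delta-1+V)B \ \underset{\tau\to0^+}{\longrightarrow}\ aB^*,
\]
and exponentiating gives
\[
e^{s_\tau F}\,e^{\tau A_V}\,e^{-s_\tau F}\,W_0\ \xrightarrow[\tau\to0^+]{H^1\times L^2}\ e^{aB^*}\,W_0, \qquad W_0\in H^1\times H^1,
\]
by an argument analogous to \Cref{Lem:strongCV}, adapted to the fact that $aB^*$ is only bounded from $H^1\times H^1$ into $H^1\times L^2$. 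The extra regularity $\dot w_0\in H^1$ is used both to place the target $(w_0+a\dot w_0,\dot w_0)$ in $H^1\times L^2$ and to control the unbounded perturbation $(\tau^2/a)(\Delta-1+V)B\,W_0$ in the limit.

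The main obstacle is that $e^{\pm s_\tau F}$ is a pure rescaling, not of the form $e^{\phi B}$, and cannot be realized as a STAR operator directly. To overcome this, I would implement a saturation argument \emph{à la} Agrachev--Sarychev, paralleling \Cref{subsec:Saturation}. Formula~\eqref{formule1} gives that, for any $\gamma\in W^{1,\infty}$, the sandwich $e^{-\gamma B}e^{tA_V}e^{\gamma B}=e^{t(A_V+\mathcal M_\gamma)}$ with $\mathcal M_\gamma=\begin{pmatrix}\gamma&0\\-\gamma^2&-\gamma\end{pmatrix}$ is STAR in time $t$. Iterating such sandwiches with $\gamma=\gamma(\tau)$ growing at an appropriate rate (analogous to the $\gamma\sim1/\sqrt\tau$ exploited in the proof of the second item of \Cref{Cor:strongCV} to realize $e^{-\phi^2 B}$), and chaining several of them in an order engineered to cancel, through the $\mathfrak{sl}_2$ bracket relations, the unwanted contributions coming from $[F,B]=2B$, one builds STAR compositions $L_\tau$ such that $L_\tau W_0\to e^{aB^*}W_0$ in $H^1\times L^2$. \Cref{Lem:STAR} (closure of STAR operators under composition and strong limit) then yields that $e^{aB^*}$ is STAR.

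The hardest step is this second one: engineering the iterative STAR composition so that the $\gamma(\tau)$-divergent contributions cancel to leave an effective leading-order generator proportional to $B^*$, rather than to an operator of the form $\phi B$ (which is all one gets from a single sandwich). The $H^1$ regularity of $\dot w_0$ is essential here, to control the higher-order errors and to guarantee strong convergence in $H^1\times L^2$ as $\tau\to0^+$.
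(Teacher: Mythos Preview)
Your first paragraph is essentially the paper's \Cref{Prop:H1*H1}: the sandwich $e^{s_\tau F}e^{\tau A_V}e^{-s_\tau F}$ with $s_\tau\sim\log\tau$ does converge to $e^{aB^*}$ on $H^1\times H^1$ data, and the paper proves it by an explicit computation close to what you describe (see \Cref{Lem:TP} for the technical core). So the convergence step is fine.

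The gap is in your second step, where you correctly identify the obstacle (showing $e^{\delta F}$ is STAR) but miss the simple resolution and instead propose a vague iterated construction that is not clearly workable. The issue with iterating sandwiches of the form \eqref{formule1} is that the unwanted term $-\gamma^2 B$ in $\mathcal M_\gamma$ is of \emph{higher} order in $\gamma$ than the desired $-\gamma F$; with the scaling $\gamma\sim 1/\tau$ needed to isolate $F$, the $B$-contribution diverges faster and dominates, so naive compositions will not cancel it via bracket relations alone.

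The paper's trick, which you overlook, is that for \emph{constant} $\gamma\in\R$ the offending $-\gamma^2 B$ can be cancelled directly by the control: since $V_0\equiv 1$, the operator $e^{t(A_V+\gamma^2 B)}$ is exactly reachable with constant control $u_0=\gamma^2$. Sandwiching this instead of $e^{tA_V}$ yields the exact identity \eqref{formule2},
\[
e^{-\gamma B}\,e^{t(A_V+\gamma^2 B)}\,e^{\gamma B}=e^{t(A_V-\gamma F)},
\]
and then $(t,\gamma)=(\tau,\delta/\tau)$ together with \Cref{Lem:strongCV} gives $e^{-\delta F}$ as a strong limit of STAR operators (\Cref{Cor:strongCV_bis}, \Cref{Prop:exp(deltaF)}). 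No iteration or $\mathfrak{sl}_2$ cancellation is needed. Once $e^{\delta F}$ is STAR, the operator $L_\tau=e^{\log(\tau)F}e^{a\tau^2 A_V}e^{-\log(\tau)F}$ is STAR by \Cref{Lem:STAR}, and your first paragraph finishes the proof.
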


Here, $B^*$ is the transpose of the matrix $B$ (see \eqref{def:B}).
The assumption $W_0=(w_0,\dot{w}_0) \in H^1 \times H^1(\T^d)$ is necessary for the target
$e^{a^* B} W_0 =(w_0+a\dot{w}_0,\dot{w}_0)$ to belong to $H^1 \times L^2(\T^d)$. 

In \Cref{subsec:dil}, we start with a preliminary result. In \Cref{subsec:CV_H1H1}, we prove the convergence result at the core of \Cref{Prop:STAR_fin}, which is proved in \Cref{subsec:Proof_vel->prof}

\subsection{Preliminary: small-time dilation/contraction}\label{subsec:dil}

The goal of this section is to prove the following result concerning the matrices
\begin{equation} \label{def:F}
    F=\begin{pmatrix}
     -1 & 0 \\ 0 & 1
 \end{pmatrix},
 \qquad \qquad
 \exp(\delta F) =\begin{pmatrix}
     e^{-\delta} & 0 \\ 0 & e^{\delta}
 \end{pmatrix}.
\end{equation}

\begin{proposition} \label{Prop:exp(deltaF)}
    For every $\delta \in \R$, the operator $\exp(\delta F)$ is $H^1 \times L^2$-STAR.
\end{proposition}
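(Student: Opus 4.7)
The plan is to realize $\exp(\delta F)$ as a strong limit, as $\tau \to 0^+$, of operators of the form $\exp(\tau A_V + \delta F)$ which are themselves obtained by conjugating a short wave evolution by $\exp(\psi B)$. The algebraic key is that $(\psi B)^2 = 0$ and $[A_V, B] = -F$ yield directly
$$
\exp(\psi B)\, A_V\, \exp(-\psi B) = A_V + \psi F - \psi^2 B,\qquad \exp(\psi B)\, B\, \exp(-\psi B) = B,
$$
and hence, for every $\tau > 0$ and $\psi, \chi \in \mathbb{R}$,
$$
\exp(\psi B)\, \exp(\tau A_V + \chi B)\, \exp(-\psi B) = \exp\!\bigl(\tau A_V + \tau \psi F + (\chi - \tau \psi^2)\, B\bigr).
$$
Choosing $\psi_\tau := \delta/\tau$ and $\chi_\tau := \delta^2/\tau$ cancels the $B$-coefficient and identifies the left-hand side exactly with $\exp(\tau A_V + \delta F)$.

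Applying \Cref{Lem:strongCV} to the bounded family $M_\tau := \tau V B + \delta F$ (which in the notation of \eqref{def:M} corresponds to $m_1 = -\delta$, $m_2 = \tau V$, $m_3 = \delta$, and satisfies $\|M_\tau - \delta F\|_{\mathcal{L}(H^1 \times L^2)} \leq C \tau \|V\|_{L^\rho} \to 0$ by \Cref{Lem:M}), and noting that $(e^{t \delta F})_{t \in \mathbb{R}} = (\mathrm{diag}(e^{-t\delta}, e^{t\delta}))_{t \in \mathbb{R}}$ is manifestly a group of bounded operators on $H^2 \times H^1(\mathbb{T}^d)$, we obtain $\exp(\tau A_V + \delta F) \to \exp(\delta F)$ strongly on $H^1 \times L^2$ as $\tau \to 0^+$. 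To conclude via \Cref{Lem:STAR} (compositions and strong limits of STAR operators are STAR) that $\exp(\delta F)$ is itself $H^1 \times L^2$-STAR, it suffices to realize each composition in small time: the outer factors $\exp(\pm \psi_\tau B)$ are STAR by \Cref{lem_psi} (since $\psi_\tau$ is a constant, hence in $L^\rho(\mathbb{T}^d)$), while the middle factor $\exp(\tau A_V + \chi_\tau B) = \exp\!\bigl(\tau(A + (V + \delta^2/\tau^2)\, B)\bigr)$ is exactly reached in time $\tau$ by the piecewise constant control $u_0(t) \equiv \delta^2/\tau^2$, $u_j(t) \equiv 0$ for $j \geq 1$.

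The main delicacy lies in the behaviour of the individual factors as $\tau \to 0^+$: the outer factors have norm of order $1 + |\delta|/\tau$, and the middle factor has norm that can grow as fast as $e^{C\delta^2/\tau}$ (via the bound from \Cref{Lem:A+L}), while the composition $\exp(\tau A_V + \delta F)$ remains bounded of order $e^{|\delta|+o(1)}$ uniformly in $\tau$. The cancellation producing the bounded limit is thus a purely non-commutative phenomenon, and consequently the STAR errors in the approximations of $\exp(\pm \psi_\tau B)$ must be taken, for each fixed $\tau$, of order roughly $e^{-C\delta^2/\tau}$ so that the accumulated error through the composition stays below the prescribed tolerance. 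Since \Cref{def:STARop} permits arbitrarily small STAR errors, this is a matter of careful bookkeeping rather than a genuine obstruction, and it is precisely the content of the strong-limit part of \Cref{Lem:STAR}.
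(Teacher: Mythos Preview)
Your proof is correct and follows essentially the same route as the paper's. The paper encapsulates the strong convergence as a separate statement (\Cref{Cor:strongCV_bis}), derives the conjugation identity \eqref{formule2} by directly differentiating the conjugated flow rather than invoking the BCH-type expansion (a minor rigour point when $A_V$ is unbounded, though your computation is immediate since $\exp(\psi B)=I+\psi B$), and cites \Cref{Lem:psi_init} instead of \Cref{lem_psi} for the constant outer factors; otherwise the structure and ingredients are identical.
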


For proving \Cref{Prop:exp(deltaF)}, the key point is the following strong convergence.

\begin{proposition} \label{Cor:strongCV_bis}
Let $A_V:=A+VB$ and $\delta \in \R$. We have the following strong convergence of operators on $H^1 \times L^2(\T^d)$ as $\tau \to 0$
\begin{equation} \label{L_tau}
\exp\big(-\frac{\delta}{\tau} B\big) \exp\big( \tau(A_V+\frac{\delta^2}{\tau^2}B) \big) \exp\big(\frac{\delta}{\tau} B\big) \longrightarrow \exp(-\delta F).
\end{equation}
\end{proposition}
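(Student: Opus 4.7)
The plan is to mirror exactly the strategy used for the second convergence in \Cref{Cor:strongCV}: convert the conjugation by $\exp(\pm \frac{\delta}{\tau} B)$ into an effective Duhamel semigroup of the form $\exp(\tau A + M_\tau)$ and then appeal to \Cref{Lem:strongCV}. The only genuine new ingredient is that here the middle factor contains the extra summand $\frac{\delta^2}{\tau^2} B$, which is precisely tailored so that the quadratic $\gamma^2 = \delta^2/\tau^2$ produced by the conjugation is annihilated.

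More precisely, I would first extend \eqref{formule1} by a verbatim repetition of its proof: for any $\gamma \in W^{1,\infty}(\T^d)$, $c \in \R$ and $t \in \R$,
\begin{equation*}
\exp(-\gamma B)\exp\bigl(t(A_V + cB)\bigr)\exp(\gamma B)
= \exp\bigl(t(A_V + \mathcal{M}_{c,\gamma})\bigr),
\qquad
\mathcal{M}_{c,\gamma} = \begin{pmatrix} \gamma & 0 \\ c - \gamma^{2} & -\gamma \end{pmatrix}.
\end{equation*}
The derivation is identical: apply $e^{\pm\gamma B} = I \pm \gamma B$ to a generic $W_0$, differentiate the pair $(w,z-\gamma w)$ where $(w,z) = \exp(t(A_V + cB))(w_1^0, w_2^0 + \gamma w_1^0)$, and use that $cB(w,z) = (0, cw)$ so that $c$ appears additively in the lower-left entry (producing $c - \gamma^2$ in place of $-\gamma^2$).

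Now I specialize to $\gamma = \delta/\tau$, $c = \delta^2/\tau^2$, $t = \tau$. Then $c - \gamma^{2} = 0$, the matrix $\mathcal{M}_{c,\gamma}$ reduces to the diagonal matrix $\operatorname{diag}(\delta/\tau, -\delta/\tau) = -\frac{1}{\tau}\delta F$ (see \eqref{def:F}), and the left-hand side of \eqref{L_tau} equals
\begin{equation*}
\exp\bigl(\tau A_V + \tau \mathcal{M}_{c,\gamma}\bigr)
= \exp(\tau A + M_\tau),
\qquad
M_\tau := \tau V B - \delta F = \begin{pmatrix} \delta & 0 \\ \tau V & -\delta \end{pmatrix}.
\end{equation*}

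It remains to invoke \Cref{Lem:strongCV} with $m_1^\tau = \delta$, $m_2^\tau = \tau V$, $m_3^\tau = -\delta$: these are manifestly bounded in $W^{1,\infty} \times L^\rho \times L^\infty$, and by \Cref{Lem:M} we have $\|M_\tau - M_0\| \leq C\tau \|V\|_{L^\rho} \to 0$, so $M_\tau$ even converges to $M_0 := -\delta F$ in operator norm, hence strongly. Finally $\exp(tM_0) = \exp(-t\delta F)$ is simply multiplication by the constant diagonal matrix $\operatorname{diag}(e^{t\delta}, e^{-t\delta})$, which is trivially a group of bounded operators on $H^{2} \times H^{1}(\T^d)$. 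All hypotheses of \Cref{Lem:strongCV} being met, we conclude that $\exp(\tau A + M_\tau)$ strongly converges to $\exp(M_0) = \exp(-\delta F)$ on $H^{1} \times L^{2}(\T^d)$. The only potential obstacle is making sure that the generalized identity \eqref{formule1} with an added constant potential $cB$ is correctly set up so that the $c - \gamma^2$ cancellation is exact; once this algebraic point is cleared, the rest is a routine application of \Cref{Lem:strongCV}.
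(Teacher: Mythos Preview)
Your proof is correct and essentially identical to the paper's. The paper proves the conjugation identity directly for the specific choice $c=\gamma^2$ (its formula \eqref{formule2}, yielding $\exp(-\gamma B)\exp(t(A_V+\gamma^2 B))\exp(\gamma B)=\exp(t(A_V-\gamma F))$), whereas you first generalize \eqref{formule1} to arbitrary $c$ and then specialize; the resulting $M_\tau=\tau VB-\delta F$ and the final appeal to \Cref{Lem:strongCV} are the same in both arguments.
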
 

We refer to \Cref{appendix:brackets} for an intuition of this result relying on Lie brackets.

\begin{proof}
    We first prove that, for every $\gamma, t \in \mathbb{R}$,
\begin{equation} \label{formule2}
\exp(-\gamma B) \exp\big(t (A_V+\gamma^2 B)\big) \exp( \gamma B) = \exp\big(t(A_V-\gamma F)\big).
\end{equation}
Indeed, for $W_0=(w_1^0,w_2^0) \in H^1 \times L^2(\T^d)$, we have
$$e^{-\gamma B} \exp\big(t (A_V+\gamma^2 B) \big) e^{ \gamma B} W_0 =
\begin{pmatrix}
    w(t) \\ z(t)-\gamma w(t)
\end{pmatrix}
\text{ where } 
\begin{pmatrix}
    w(t) \\ z(t)
\end{pmatrix}
= e^{t(A_V +\gamma^2 B)} \begin{pmatrix}
    w_1^0 \\ w_2^0+\gamma w_1^0
\end{pmatrix}.$$
thus
$$\frac{d}{dt} 
\begin{pmatrix}
    w \\ z-\gamma w
\end{pmatrix}
\!=\!
\begin{pmatrix}
    z \\ (\Delta+V+\gamma^2)w-\gamma z
\end{pmatrix}
\!=\!\begin{pmatrix}
   \gamma w + (z-\gamma w) \\ (\Delta+V)w-\gamma (z-\gamma w)
\end{pmatrix}
\!=\! (A_V-\gamma F) \begin{pmatrix}
    w \\ z-\gamma w
\end{pmatrix},$$
which gives the conclusion. By applying \eqref{formule2} with $(t,\gamma) \leftarrow (\tau,\delta/\tau)$ we obtain
$$ \exp\big(-\frac{\delta}{\tau} B\big) \exp\big( \tau(A+\frac{\delta^2}{\tau^2}B) \big) \exp\big(\frac{\delta}{\tau} B\big)
= \exp(\tau A + M_{\tau})
\quad \text{ where } \quad
M_{\tau} = \tau V B - \delta F.
$$
We apply \Cref{Lem:strongCV}.
By \Cref{Lem:M},
$\|M_{\tau}-M_0\| \leq C \tau \|V\|_{L^{\rho}}$ i.e.
$M_{\tau}-M_0$ converges to zero in $\mathcal{L}(H^1 \times L^2)$ thus
$M_{\tau}$ strongly converges towards $M_0=-\delta F$ on $H^1 \times L^2(T^d)$. Clearly, $(e^{-t\delta F})_{t\in\mathbb{R}}$ is a group of bounded operators on $H^2 \times H^1(\T^d)$. By \Cref{Lem:strongCV},
$\exp(\tau A + M_{\tau})$ strongly converges towards $\exp(M_0)$ on $H^1 \times L^2(\T^d)$.
\end{proof}

\begin{proof}[Proof of \Cref{Prop:exp(deltaF)}]
Let $\delta \in \R$. By \Cref{Lem:STAR} and \Cref{Lem:psi_init}, for every $\tau>0$, the operator $L_{\tau}$ in the left hand side of \eqref{L_tau}
is $H^1 \times L^2$-approximately reachable in time $\tau^+$. By \Cref{Cor:strongCV_bis}, $L_{\tau}$ converges strongly towards $\exp(-\delta F)$ on $H^1 \times L^2(\T^d)$. By \Cref{Lem:STAR}, $\exp(-\delta F)$ is $H^1 \times L^2$-STAR.
\end{proof}

\subsection{A convergence with loss of regularity}
\label{subsec:CV_H1H1}

The following convergence is the key point to prove Proposition \ref{Prop:STAR_fin}.

\begin{proposition} \label{Prop:H1*H1}
Let 
$a \in (0,\infty)$ and 
$W_0 \in H^1 \times H^1(\T^d)$. Then
$$\left\| \exp(\log(\tau)F) \exp( a \tau^2 A_V) \exp(-\log(\tau)F)W_0
- e^{a B^*}W_0
\right\|_{H^1\times L^2} \underset{\tau \to 0}{\longrightarrow} 0.$$
\end{proposition}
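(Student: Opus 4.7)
The dilation $D_\tau := \exp(\log(\tau)F) = \mathrm{diag}(1/\tau,\tau)$ and its inverse $D_\tau^{-1} = \mathrm{diag}(\tau,1/\tau)$ conjugate the Klein--Gordon semigroup into a space-time rescaling of the equation. Concretely, letting $u$ denote the first coordinate of $\exp(t A_V)(\tau w_0, \dot w_0/\tau)$ and setting $W_\tau(s) := u(\tau^2 s)/\tau$, a direct computation gives $W_\tau(0)=w_0$, $\partial_s W_\tau(0)=\dot w_0$, $\partial_s^2 W_\tau = \tau^4(\Delta-1+V)W_\tau$, and
\[
\exp(\log(\tau)F)\exp(a\tau^2 A_V)\exp(-\log(\tau)F)\, W_0 = (W_\tau(a),\,\partial_s W_\tau(a)).
\]
The target $e^{aB^*}W_0 = (w_0 + a\dot w_0,\, \dot w_0)$ is precisely the value at time $a$ of the formal limit $\partial_s^2 W = 0$ with the same initial data. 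The plan is thus to prove that this ``slow wave'' solution converges to its formal limit in $H^1\times L^2$ as $\tau \to 0$.

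\textbf{Free case $V = 0$.} The Fourier representation of \Cref{Lem:A} applied to the rescaled equation gives
\[
c_n(W^{(0)}_\tau(a)) = c_n(w_0)\cos(a\tau^2\langle n\rangle) + c_n(\dot w_0)\frac{\sin(a\tau^2\langle n\rangle)}{\tau^2\langle n\rangle},
\]
with an analogous formula for $\partial_s W^{(0)}_\tau(a)$. Using $|\cos x - 1|\leq 2$ and $|\sin x/x|\leq 1$, each Fourier coefficient converges pointwise to the correct limit, and the squared error, multiplied by $\langle n\rangle^2$, is dominated by $C(\langle n\rangle^2|c_n(w_0)|^2 + \langle n\rangle^2|c_n(\dot w_0)|^2)$. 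Summability of this majorant requires precisely $w_0,\,\dot w_0 \in H^1$, which explains the strengthened hypothesis on $W_0$. Dominated convergence for series then yields $W^{(0)}_\tau(a)\to w_0 + a\dot w_0$ in $H^1$; a milder bound, dominated by $\langle n\rangle^2|c_n(w_0)|^2 + |c_n(\dot w_0)|^2$, gives $\partial_s W^{(0)}_\tau(a)\to \dot w_0$ in $L^2$.

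\textbf{Perturbation step $V \neq 0$.} Set $E_\tau := W_\tau - W^{(0)}_\tau$; then $E_\tau$ solves the free slow wave equation with zero data and forcing $\tau^4 V W_\tau$ in the second coordinate. The standard energy identity for the operator $\partial_s^2 - \tau^4(\Delta-1)$ yields
\[
\|\partial_s E_\tau(s)\|_{L^2} + \tau^2 \|E_\tau(s)\|_{H^1} \leq C\tau^4\int_0^s \|V W_\tau(s')\|_{L^2}\,ds' \leq C\tau^4\|V\|_{L^\rho}\int_0^s \|W_\tau(s')\|_{H^1}\,ds',
\]
where the last inequality uses \Cref{Lem:Sob}. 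Combined with the free-case Fourier bound $\sup_{s\in[0,a]}\|W^{(0)}_\tau(s)\|_{H^1}\leq M_0$, a Gronwall bootstrap on $\phi(s) := \sup_{s'\leq s}\|E_\tau(s')\|_{H^1}$ absorbs the self-interaction term as soon as $C\tau^2 a\|V\|_{L^\rho}\leq 1/2$, giving $\phi(a)=O(\tau^2)$ and $\|\partial_s E_\tau(a)\|_{L^2} = O(\tau^4)$. Adding this to the free-case convergence concludes the proof.

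\textbf{Main obstacle.} The crux is obtaining a uniform-in-$\tau$ $H^1$ bound on $W_\tau$ when $V \neq 0$: the natural conserved energy $\|\partial_s W\|_{L^2}^2 + \tau^4\|W\|_{H^1}^2$ only controls $\tau^2\|W\|_{H^1}$ and cannot by itself close the loop, since one needs $VW_\tau \in L^2$ with a bound independent of $\tau$. It is the combination of the Fourier-based uniform bound on $W^{(0)}_\tau$ (where the hypothesis $\dot w_0 \in H^1$ is essential) with the prefactor $\tau^4$ in front of $V W_\tau$ that makes the bootstrap close.
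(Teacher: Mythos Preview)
Your proof is correct and follows essentially the same route as the paper: an explicit Fourier computation plus dominated convergence handles the free case (and is exactly where the hypothesis $\dot w_0\in H^1$ enters), and a Duhamel/energy perturbation argument absorbs the potential $V$. The only differences are organizational: you rescale time upfront and view the problem as a ``slow wave'' $\partial_s^2 W_\tau=\tau^4(\Delta-1+V)W_\tau$ converging to $\partial_s^2 W=0$, then split into free part plus remainder, whereas the paper stays in the original time, splits the initial data as $(w_0,0)+(0,\dot w_0)$, and isolates the key estimate $\|t^{-1}w(t)-\dot w_0\|_{H^1}\to 0$ as a separate lemma (\Cref{Lem:TP}); your bootstrap also yields the quantitative rate $\|E_\tau(a)\|_{H^1}=O(\tau^2)$, while the paper only uses the qualitative fact $\|w(s)\|_{H^1}\to 0$.
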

We refer to \Cref{appendix:brackets} for an intuition of this result relying on Lie brackets.

In the proof of \Cref{Prop:H1*H1}, we will use the following lemma.

\begin{lemma} \label{Lem:TP}
Let 
$\dot{w}_0 \in H^1(\T^d)$. 
We define
$(w(t),\dot{w}(t))=\exp(t A_V) \begin{pmatrix}
0 \\ \dot{w}_0
\end{pmatrix}$.
Then
$$\left\| \frac{1}{t} w(t) - \dot{w}_0 \right\|_{H^1} \underset{t \to 0}{\longrightarrow} 0. $$
\end{lemma}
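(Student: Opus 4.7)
My plan is to decompose the trajectory via Duhamel into the free piece driven by $e^{tA}$, which delivers the limit $\dot{w}_0$, plus a perturbation coming from $V$, which I will show is $o(t)$ in $H^1$ using the vanishing of the initial profile. By \Cref{Lem:A+L} with $L=VB$,
\[
W(t) = e^{tA}(0,\dot{w}_0) + \int_0^t e^{(t-s)A}\,VB\,W(s)\,ds.
\]
Since $BW(s)=(0,w(s))$ and $V\in L^{\rho}(\T^d)$, \Cref{Lem:Sob} gives $Vw(s)\in L^2(\T^d)$. Taking first components yields $w(t)=w^A(t)+R(t)$, where $w^A(t):=(e^{tA}(0,\dot{w}_0))_1$ and $R(t):=\int_0^t (e^{(t-s)A}(0,Vw(s)))_1\,ds$.

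For the main term, the Fourier expression in \Cref{Lem:A} gives $w^A(t,x)=\sum_n c_n(\dot{w}_0)\,\frac{\sin(\langle n\rangle t)}{\langle n\rangle}\,e^{in\cdot x}$, so by Parseval the claim $\|w^A(t)/t - \dot{w}_0\|_{H^1}\to 0$ reduces to showing
\[
\sum_{n\in\mathbb{Z}^d} \langle n\rangle^2\,|c_n(\dot{w}_0)|^2\,\Bigl|\tfrac{\sin(\langle n\rangle t)}{t\langle n\rangle}-1\Bigr|^2 \;\longrightarrow\; 0.
\]
Each summand tends to $0$, is bounded by $4\langle n\rangle^2|c_n(\dot{w}_0)|^2$, and the dominating series converges since $\dot{w}_0\in H^1$; dominated convergence concludes. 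The same formula yields the bound $\|(e^{rA}(0,f))_1\|_{H^1}\leq \|f\|_{L^2}$ for every $f\in L^2(\T^d)$ and $r\in\R$ (the $\langle n\rangle^{\pm 2}$ factors cancel and $\sin^2\leq 1$). Applying it inside the integral defining $R$ with $f=Vw(s)$ and using \Cref{Lem:Sob},
\[
\left\|\frac{R(t)}{t}\right\|_{H^1} \;\leq\; \sup_{s\in[0,t]}\|Vw(s)\|_{L^2} \;\leq\; C\|V\|_{L^\rho}\sup_{s\in[0,t]}\|w(s)\|_{H^1}.
\]

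The main obstacle is that a crude Duhamel bound only gives $R(t)=O(t)$ in $H^1$, which would leave $R(t)/t$ merely bounded. The hypothesis $w(0)=0$ saves the argument: since $W\in C^0(\R,H^1\times L^2)$ with $W(0)=(0,\dot{w}_0)$, one has $\|w(s)\|_{H^1}\leq \|W(s)-W(0)\|_{H^1\times L^2}\to 0$ as $s\to 0$, upgrading $R(t)$ to $o(t)$ in $H^1$. This is precisely why the lemma targets initial data of the form $(0,\dot{w}_0)$, and why no $H^2$ regularity on $\dot{w}_0$ is needed to reach the $H^1$ conclusion.
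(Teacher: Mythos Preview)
Your proof is correct and follows essentially the same approach as the paper: a Duhamel decomposition into the free piece (handled by the Fourier formula and dominated convergence) plus a remainder, controlled via $\|Vw(s)\|_{L^2}\leq C\|V\|_{L^\rho}\|w(s)\|_{H^1}$ and the fact that $\|w(s)\|_{H^1}\to 0$. The only cosmetic difference is that the paper bounds the remainder through the isometry of $e^{(t-s)A}$ on $H^1\times L^2$, while you extract the first component directly via $\|(e^{rA}(0,f))_1\|_{H^1}\leq\|f\|_{L^2}$; these are the same estimate.
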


\begin{proof}
\emph{First case $V=0$.} We define $(v,\dot{v})(t)=\exp(t A) \begin{pmatrix}
0 \\ \dot{w}_0
\end{pmatrix}$.
We deduce from \eqref{SG:w1w2} that
$$ \left\| \frac{1}{t} v(t) - \dot{w}_0 \right\|_{H^1}^2
= \sum_{n \in \mathbb{Z}} \left| 
\langle n \rangle c_n(\dot{w}_0) \left( 
\frac{\sin(\langle n \rangle t)}{\langle n \rangle t} -1 \right)
\right|^2
\underset{t \to 0}{\longrightarrow} 0$$
by the dominated convergence theorem.

\medskip

\noindent \emph{Second case $V \neq 0$.} 
We define $W(t)=(w,\dot{w})(t))$ and $W_0=(0,\dot{w}_0)$.
The Duhamel formula 
$$W(t)=e^{tA}W_0+\int_0^t e^{(t-s)A} VBW(s) ds$$
and the isometry of $e^{(t-s)A}$ on $H^1 \times L^2(\T^d)$ imply
$$\left\| \frac{1}{t}w(t)-\dot{w}_0 \right\|_{H^1} 
\leq \left\| \frac{1}{t} v(t) - \dot{w}_0 \right\|_{H^1}
+ \frac{1}{t} \int_0^t \| V B W(s) \|_{H^1 \times L^2}\, ds.$$
Moreover, by definition of $B$ and \Cref{Lem:Sob},
$$\| V B W(s) \|_{H^1 \times L^2}
= \|V w(s) \|_{L^2} \leq C \|V\|_{L^{\rho}} \|w(s)\|_{H^1}$$
thus
$$\left\| \frac{1}{t}w(t)-\dot{w}_0 \right\|_{H^1} 
\leq \left\| \frac{1}{t} v(t) - \dot{w}_0 \right\|_{H^1}
+ C \|V\|_{L^{\rho}} \int_0^1 \|w(\theta t)\|_{H^1} d\theta.$$
The first term in the right hand side converges to $0$ as $t \to 0$ by the first case. The second term also does because $\|w(s)\|_{H^1} \to 0$.
\end{proof}

\begin{proof}[Proof of \Cref{Prop:H1*H1}]
We define $W(\tau)=\exp(\log(\tau)F) \exp( a \tau^2 A_V) \exp(-\log(\tau)F)W_0$.
We deduce from \eqref{def:F} that
$$W(\tau)
 =\begin{pmatrix}
\frac{1}{\tau} & 0 \\ 0 & \tau
\end{pmatrix}
\exp( a \tau^2 A_V) 
\begin{pmatrix}
\tau & 0 \\ 0 & \frac{1}{\tau}
\end{pmatrix}
\begin{pmatrix}
w_0 \\ \dot{w}_0
\end{pmatrix}.$$
Using linearity, the explicit expression of $e^{aB^*}$ and the triangle inequality, we obtain
$$\begin{aligned}
\left\| W(\tau)-e^{aB^*}W_0 \right\|
& \leq 
\left\| \begin{pmatrix}
1 & 0 \\ 0 & \tau^2
\end{pmatrix}
\exp( a \tau^2 A_V) 
\begin{pmatrix}
w_0 \\ 0
\end{pmatrix} - 
\begin{pmatrix}
w_0 \\ 0
\end{pmatrix}
\right\| 
\\ & +  
\left\| 
\begin{pmatrix}
\frac{1}{\tau^2} & 0 \\ 0 & 1
\end{pmatrix}
\exp( a \tau^2 A_V) 
\begin{pmatrix}
0 \\ \dot{w}_0
\end{pmatrix}
- 
\begin{pmatrix}
a \dot{w}_0 \\ \dot{w}_0
\end{pmatrix}
\right\|.
\end{aligned}$$
The first term in the right hand side converges to zero as $\tau \to 0$ because so does\newline$\| \exp(a\tau^2 A_V)(w_0,0) - (w_0,0)\|$ since $(w_0,0) \in H^1 \times L^2(\T^d)$. In the second term, the first component converges to $0$ in $H^1$ by \ \Cref{Lem:TP} and the second component converges to $0$ in $L^2$ because
$\|\exp(a\tau^2 A_V)(0,\dot{w}_0) - (0,\dot{w}_0)\| \to 0$ since $(0,\dot{w}_0) \in H^1 \times L^2(\T^d)$.
\end{proof}

\subsection{Proof of \Cref{Prop:STAR_fin}}
\label{subsec:Proof_vel->prof}

    Let $W_0 \in H^1 \times H^1(\T^d)$, $a \in (0,\infty)$ and $\epsilon>0$. By \Cref{Prop:H1*H1}, there exists $\tau>0$ such that $\| L_{\tau}W_0 - e^{a B^*}W_0\| < \frac{\epsilon}{2}$,
where $L_{\tau} = \exp(\log(\tau)F) \exp( a \tau^2 A_V) \exp(-\log(\tau)F)$. Moreover, one may assume $\tau$ small enough so that $a\tau^2<\epsilon$. By \Cref{Prop:exp(deltaF)} and \Cref{Lem:STAR}, the operator $L_{\tau}$ is $H^1 \times L^2$-approximately reachable in time $(a\tau^2)^+$. Thus there exists $T \in (0,\epsilon)$ and $u \in PWC([0,T],\R^{2d+1})$ such that 
$\|W(T;u,W_0)-L_{\tau}W_0\| < \frac{\epsilon}{2}$. By the triangular inequality, we conclude $\| W(T;u,W_0)-e^{aB^*}W_0\|<\epsilon$.

\section{Small-time approximate controllability}
\label{sec:STAC}

In this section, we prove \Cref{Prop:STAC}. 
In \Cref{subsec:STAC_1}, we prove the first statement of \Cref{Prop:STAC}.
In \Cref{subsec:W1}, we formalize an intermediary result that will be useful in the end of this article. Finally, in \Cref{subsec:STAC_2}, we prove the second statement of \Cref{Prop:STAC}.

\subsection{Proof of the first statement of \Cref{Prop:STAC}}    
\label{subsec:STAC_1}

Let $W_0=(w_0,\dot{w}_0) \in H^1 \times L^2(\T^d)$ with $|Z(w_0)|=0$.
We consider a target $W_f=(w_f,\dot{w}_f) \in H^1 \times L^2(\T^d)$ and $\epsilon>0$. One may assume that $|Z(w_f)|=0$ (otherwise modify $w_f$ and reduce $\epsilon$). We will use the following steps in our control strategy
$$W_0=\begin{pmatrix}
    w_0 \\ \dot{w}_0
\end{pmatrix}
\xrightarrow[T_1,u_1]{} 
W_1=\begin{pmatrix}
    w_0 \\ w_f-w_0
\end{pmatrix}
\xrightarrow[T_2,u_2]{e^{B^*}} 
W_2=\begin{pmatrix}
    w_f \\ w_f-w_0
\end{pmatrix}
\xrightarrow[T_3,u_3]{}
W_f=\begin{pmatrix}
    w_f \\ \dot{w_f}
\end{pmatrix}.$$

\noindent $\bullet$ Since $|Z(w_f)|=0$, \Cref{Prop:STAC_vel} provides $T_3 \in (0,\frac{\epsilon}{3})$ and $u_3 \in PWC([0,T_3],\R^{2d+1})$ such that
$\| W(T_3;u_3,W_2)-W_f\|<\frac{\epsilon}{2}$. By continuity of $W(T_3;u_3,\cdot)$ on $H^1 \times L^2(\T^d)$, there exists $\delta_3>0$ such that, for every $\widetilde{W}_0 \in H^1 \times L^2(\T^d)$ with $\|\widetilde{W}_0-W_2\|<\delta_3$ then $\| W(T_3;u_3,\widetilde{W}_0)-W_f\|<\epsilon$.

\medskip

\noindent $\bullet$ Since $W_1 \in H^1 \times H^1(\T^d)$ and $e^{B^*}W_1=W_2$, by \Cref{Prop:STAR_fin}, there exists $T_2 \in (0,\frac{\epsilon}{3})$ and $u_2 \in PWC([0,T_2],\R^{2d+1})$ such that
$\| W(T_2;u_2,W_1)- W_2\|<\frac{\delta_3}{2}$.
By continuity of $W(T_2;u_2,\cdot)$ on $H^1 \times L^2(\T^d)$, there exists $\delta_2>0$ such that, for every $\widetilde{W}_0 \in H^1 \times L^2(\T^d)$ with $\|W_1-\widetilde{W}_0\|<\delta_2$ then $\| W(T_2;u_2,\widetilde{W}_0)-W_2\|<\delta_3$.

\medskip

\noindent $\bullet$ Since $|Z(w_0)|=0$, \Cref{Prop:STAC_vel} provides $T_1 \in (0,\frac{\epsilon}{3})$ and $u_1 \in PWC([0,T_1],\R^{2d+1})$ such that
$\| W(T_1;u_1,W_0)-W_1\|<\delta_2$.

\medskip

Finally, with $T=T_1+T_2+T_3 \in (0,\epsilon)$ and $u:[0,T] \to \R^{2d+1}$ the concatenation of $u_1$, $u_2$ and $u_3$, we obtain
$\|W(T;u,W_0)-W_f\|<\epsilon$.

\subsection{Reaching $W_1$ such that $Z(w_1)=Z(W_1)=Z(W_0)$}    
\label{subsec:W1}

\begin{proposition} \label{Lem:w0+aw0.}
Let $W_0=(w_0,\dot{w}_0) \in H^1 \times H^1(\T^d)$ be a non zero initial condition. 
\begin{enumerate}[topsep=0cm, parsep=0cm, itemsep=0cm]
\item  There exists $a \in (0,\infty)$ such that $|Z(w_0+a\dot{w}_0)|=|Z(W_0)|$.
\item There exists a target $W_1=(w_1,\dot{w}_1) \in H^1 \times L^2(\T^d)$ small-time $H^1 \times L^2$-approximately reachable from $W_0$ such that $Z(w_1)=Z(W_1)=Z(W_0)$.
\end{enumerate}
\end{proposition}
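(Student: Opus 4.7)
The two statements will be treated in sequence, with statement~2 relying on statement~1 together with \Cref{Prop:STAR_fin}. The guiding idea is that the natural candidate for $W_1$ is $e^{aB^*}W_0 = (w_0+a\dot{w}_0,\dot{w}_0)$ for a suitably chosen $a>0$: \Cref{Prop:STAR_fin} then immediately yields its small-time $H^1\times L^2$-approximate reachability. The whole difficulty is thus to find $a>0$ such that the first component $w_0+a\dot{w}_0$ does not pick up extra zeroes (in measure) outside $Z(W_0)$.

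For statement~1, I would partition $\T^d$ according to whether $w_0$ and $\dot{w}_0$ vanish or not. On $Z(W_0)$ both vanish so $w_0+a\dot{w}_0=0$; on $Z(w_0)\setminus Z(W_0)$ one has $w_0+a\dot{w}_0=a\dot{w}_0\neq 0$ for any $a\neq 0$; on $Z(\dot{w}_0)\setminus Z(W_0)$ one has $w_0+a\dot{w}_0=w_0\neq 0$; finally, on
\[
E:=\T^d\setminus\bigl(Z(w_0)\cup Z(\dot{w}_0)\bigr),
\]
the identity $w_0(x)+a\dot{w}_0(x)=0$ is equivalent to $g(x)=a$, with the measurable function $g:=-w_0/\dot{w}_0$ well defined on $E$. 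Consequently, modulo null sets,
\[
Z(w_0+a\dot{w}_0) = Z(W_0)\cup\{x\in E:g(x)=a\},
\]
and it suffices to choose $a>0$ for which the level set $\{g=a\}$ has zero Lebesgue measure. Since the level sets $\{g=a\}$, $a\in\R$, are pairwise disjoint subsets of $E\subset\T^d$ which has finite measure, at most countably many of them carry positive measure; uncountably many values $a\in(0,\infty)$ therefore qualify, which establishes statement~1.

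For statement~2, fix any such $a>0$ and set $W_1 := e^{aB^*}W_0 = (w_0+a\dot{w}_0,\,\dot{w}_0)$. Since $W_0\in H^1\times H^1$, we have $W_1\in H^1\times H^1\subset H^1\times L^2$, and \Cref{Prop:STAR_fin} ensures that $W_1$ is small-time $H^1\times L^2$-approximately reachable from $W_0$. The claimed equalities $Z(w_1)=Z(W_1)=Z(W_0)$ then reduce to a chain of set-theoretic inclusions (understood modulo null sets, as in the footnote of the paper). The inclusions $Z(W_0)\subset Z(W_1)\subset Z(w_1)$ follow directly from the explicit formula for $W_1$ (both components of $W_0$ vanish on $Z(W_0)$, and $\{W_1=0\}\subset\{w_1=0\}$), while the reverse inclusion $Z(w_1)\subset Z(W_0)$ up to a null set is exactly what the choice of $a$ in statement~1 guarantees. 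I do not anticipate any serious obstacle; the only slightly subtle point is the measure-theoretic counting argument in statement~1, and it becomes routine once the partition of $\T^d$ and the auxiliary map $g=-w_0/\dot{w}_0$ on $E$ are identified.
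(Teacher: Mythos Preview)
Your proof is correct and follows essentially the same route as the paper: the paper packages your partition argument by directly noting that the sets $E_a:=Z(w_0+a\dot{w}_0)\setminus Z(W_0)$ are pairwise disjoint in $\T^d$ (your function $g=-w_0/\dot{w}_0$ makes this explicit), hence only countably many have positive measure, and then takes $W_1=e^{aB^*}W_0$ and invokes \Cref{Prop:STAR_fin} exactly as you do. Your verification of the chain $Z(W_0)\subset Z(W_1)\subset Z(w_1)$ together with $|Z(w_1)|=|Z(W_0)|$ is a welcome explicitness that the paper leaves implicit.
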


\begin{proof}
\emph{1.} For any $a \in \R$, $Z(W_0) \subset Z(w_0+a\dot{w}_0)$.
For $a \in \R$, $E_a :=Z(w_0+a\dot{w}_0) \setminus Z(W_0)$ defines a familly $(E_a)_{a\in\R}$ of measurable subsets of $\T^d$ such that for any $a \neq b$, $E_a \cap E_b = \emptyset$. For any finite subset $A$ of $\R$, we have
$$\sum_{a \in A} |E_{a}| = \left| \underset{a \in A}{\bigsqcup} E_a \right| \leq | \T^d|$$
thus the familly $(|E_a|)_{a\in\R}$ is summable. As a consequence, it has a countable support. In particular, there exists $a \in \R_+$ such that $|E_a|=0$.
\\
\emph{2.} Let $a \in (0,\infty)$ be given by the Statement 1. By \Cref{Prop:STAR_fin}, the target $W_1=e^{aB^*}W_0$ is small-time $H^1 \times L^2$-approximately reachable from $W_0$.
\end{proof}

\subsection{Proof of the second statement of \Cref{Prop:STAC}}    
\label{subsec:STAC_2}

Let $W_0=(w_0,\dot{w}_0) \in H^1 \times H^1(\T^d)$ with $|Z(W_0)|=0$. We consider a target $W_f=(w_f,\dot{w}_f) \in H^1 \times L^2(\T^d)$ and $\epsilon>0$. By \Cref{Lem:w0+aw0.}, there exists a target $W_1=(w_1,\dot{w}_1) \in H^1 \times L^2(\T^d)$ small-time $H^1 \times L^2$-approximately reachable from $W_0$, such that $Z(w_1)=Z(W_1)=Z(W_0)$. In particular, we have $|Z(w_1)|=0$. We use the following steps in our control strategy
$$W_0=\begin{pmatrix}
    w_0 \\ \dot{w}_0
\end{pmatrix}
\quad \xrightarrow[T_1,u_1]{} \quad
W_1=\begin{pmatrix}
    w_1 \\ \dot{w}_1
\end{pmatrix}
\quad \xrightarrow[T_2,u_2]{} \quad
W_f=\begin{pmatrix}
    w_f \\ \dot{w_f}
\end{pmatrix}.$$

\noindent $\bullet$ Since $|Z(w_1)|=0$, the first statement of \Cref{Prop:STAC} provides $T_2 \in (0,\frac{\epsilon}{2})$ and\newline$u_2 \in PWC([0,T_2],\R^{2d+1})$ such that
$\| W(T_2;u_2,W_1)-W_f\|<\frac{\epsilon}{2}$. By continuity of $W(T_2;u_2,\cdot)$ on $H^1 \times L^2(\T^d)$, there exists $\delta_2>0$ such that, for every $\widetilde{W}_0 \in H^1 \times L^2(\T^d)$ with $\|W_1-\widetilde{W}_0\|<\delta_2$ then $\| W(T_2;u_2,\widetilde{W}_0)-W_f\|<\epsilon$.

\medskip

\noindent $\bullet$ By definition of $W_1$, there exists $T_1 \in (0,\frac{\epsilon}{2})$ and $u_1 \in PWC([0,T_1],\R^{2d+1})$ such that\newline$\| W(T_1;u_1,W_0)-W_1\|<\delta_2$.

\medskip

Finally, with $T=T_1+T_2 \in (0,\epsilon)$ and $u:[0,T] \to \R^{2d+1}$ the concatenation of $u_1$ and $u_2$, we obtain
$\|W(T;u,W_0)-W_f\|<\epsilon$.

\section{Minimum time for approximate controllability}
\label{sec:Tmin}

In this section, we prove \Cref{Prop:Tmin}. 

In \Cref{subsec:profile>0}, we identify initial conditions $W_0$ from which one can reach in time $r(W_0)$ a state $W_1$ with strictly positive profile (thus to which the first statement of \Cref{Prop:STAC} applies).
These initial conditions have the form $W_0=(0,\dot{w}_0)$ with $\dot{w}_0 \geq 0$ (resp. $\leq 0$) a.e. on $\T^d$. This step uses explicit representation formula of the solution of the wave equation in dimension $d=1$ and $d=2$. The analogue result is false in dimension $d=3$.

In \Cref{subsec:Reach0phi}, we explain how to reach in arbitrarily small time such a state from any initial condition. This allows to write the proof of \Cref{Prop:Tmin} in \Cref{subsec:proofTmin}.

\subsection{Reaching positive profiles}
\label{subsec:profile>0}

We introduce
\begin{equation} \label{def:Atilde}
\widetilde{A}:=\begin{pmatrix}
    0 & 1 \\ \Delta & 0
\end{pmatrix}.
\end{equation}
Then $\widetilde{A}=A+\sum_{j=0}^{2d} \alpha_j V_j B$ where $\alpha_0=1$ and $\alpha_j=0$ for $j \geq 1$ thus the group $(e^{t \widetilde{A}})_{t\in\R}$ corresponds to trajectories of \eqref{wave_vect} associated with the constant control $u=e_0$ (where $e_0=(1,0,\dots,0)$ denotes the first element of the canonical basis of $\R^{2d+1}$) and $V=0$.

\begin{proposition} \label{Lem:signe}
We assume $d \in \{1,2\}$. Let $W_0=(0,\dot{w}_0) \in H^1 \times L^2(\T^d)$ be a non zero initial condition such that 
$\dot{w}_0 \geq 0$ a.e. on $\T^d$. Let $(w(t),\partial_t w(t)):=e^{t\widetilde{A}}W_0$ for $t \in \R$. Then, for every $t>r(W_0)$, we have $w(t,\cdot)>0$ a.e. on  $\T^d$.
\end{proposition}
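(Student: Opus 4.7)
The plan is to use the explicit representation formulas for the free wave equation, valid in low dimensions. First observe that $\widetilde{A}$ generates the \emph{free} wave equation $\partial_t^2 w = \Delta w$ (the Klein--Gordon mass $-1$ is cancelled by the constant control $u_0 = 1$), so the solution $(w(t), \partial_t w(t)) = e^{t\widetilde{A}}(0, \dot{w}_0)$ is obtained by lifting $\dot{w}_0$ to its $2\pi\mathbb{Z}^d$-periodic extension on $\R^d$, solving the Cauchy problem with data $(0, \dot{w}_0)$ by the classical formulas, and descending to $\T^d$. I would also reduce once and for all: since $w_0 \equiv 0$, we have $Z(W_0) = Z(\dot{w}_0)$, so $r(W_0) = r(\dot{w}_0)$; and the hypothesis $W_0 \neq 0$ together with $\dot{w}_0 \geq 0$ a.e.\ gives $|\{\dot{w}_0 > 0\}| > 0$.

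Next I would apply the explicit formulas. For $d=1$, d'Alembert's formula reads
\[
w(t, x) = \frac{1}{2} \int_{x-t}^{x+t} \dot{w}_0(y)\, dy,
\]
while for $d=2$, the method of descent from the Kirchhoff formula yields
\[
w(t, x) = \frac{t}{2\pi} \int_{|y| < t} \frac{\dot{w}_0(x+y)}{\sqrt{t^2 - |y|^2}}\, dy,
\]
where on the right-hand sides $\dot{w}_0$ denotes the periodic lift to $\R^d$. In both formulas the integrand is pointwise non-negative, and using the identity $\pi(B_{\R^d}(x, t)) = B_{\T^d}(x, t)$ for the quotient metric, the vanishing $w(t, x) = 0$ forces $\dot{w}_0 \equiv 0$ a.e.\ on the torus ball $B_{\T^d}(x, t)$.

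To conclude, I argue by contradiction: suppose $t > r(W_0)$ yet the set $\{x \in \T^d : w(t, x) = 0\}$ has positive measure. Picking any $x$ in this set yields $B_{\T^d}(x, t) \subset Z(\dot{w}_0)$ up to null sets, hence $r(W_0) = r(\dot{w}_0) \geq t$, contradicting $t > r(W_0)$. Therefore $w(t, \cdot) > 0$ a.e.\ on $\T^d$, as claimed. The only real obstacle is dimensional: in $d \geq 3$, Kirchhoff's formula integrates $\dot{w}_0$ only over the sphere $\partial B(x, t)$, so a non-negative profile vanishing on that sphere but positive on the enclosed ball could still produce $w(t, x) = 0$. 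This is precisely why the proposition is stated only for $d \in \{1, 2\}$ and why the authors must switch to a large-time argument in higher dimensions.
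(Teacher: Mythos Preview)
Your argument is correct and follows the same route as the paper: d'Alembert in $d=1$, Poisson (descent) in $d=2$, and the observation that non-negativity of the kernel forces $\dot{w}_0$ to vanish on the whole ball $B(x,t)$ whenever $w(t,x)=0$, contradicting $t>r(\dot{w}_0)$. The one point you gloss over, and which the paper treats carefully, is the justification that Poisson's formula actually represents the $H^1\times L^2$ solution when $\dot{w}_0$ is merely in $L^2(\T^2)$: the paper checks that the kernel $K_t(y)=\mathbb{1}_{B(0,t)}(y)(t^2-|y|^2)^{-1/2}$ lies in $L^1(\R^2)$, so convolution with it is bounded on $L^2(\T^2)$ by Young's inequality, and then invokes density of smooth data. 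This is a technical but genuine step, since the classical derivation of Poisson's formula assumes $C^2$ data.
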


\begin{proof}
\noindent \emph{Case $d=1$:} The following explicit expression, consequence of D'Alembert formula,
\begin{equation}\label{eq:DAlembert_dot_w0}
w(t,x)=\frac{1}{2}\int_{x-t}^{x+t} \dot{w}_0 
\end{equation}
proves that, for every $x \in\T$ and $t>r(\dot{w}_0)$ then $w(t,x)>0$. Note that \eqref{eq:DAlembert_dot_w0} defines an element of $C^0(\R; H^1(\T)) \cap C^1(\R; L^2(\T))$, by continuity of translations on $L^2(\T)$, and therefore it is indeed the solution of the wave equation with initial data $(0, \dot{w}_0)$.

\medskip

\noindent \emph{Case $d=2$:} For a sufficiently regular data $\dot{w}_0$, the following explicit expression, consequence of Poisson's formula (see, e.g., \cite{evans-book}, Section 4.2.1, equation (27) on p. 74, and Theorem 3 on p. 80) 
\begin{equation}\label{eq:proof:Lem:signe:1}
w(t,x)=\frac{1}{2\pi t^2} \int_{B(x,t)}
\frac{t^2\dot{w}_0(y)}{(t^2-|y-x|^2)^{1/2}} dy    
\end{equation}
proves that, if $x \in\T^2$ and $t>r(\dot{w}_0)$, then $w(t,x)>0$.

To complete the proof, we need to show that, at low regularity, i.e. $\dot{w}_0 \in L^2(\T^d)$, the solution $w$ (given by Proposition \ref{Lem:A}) coincides with the function at the right-hand side of \eqref{eq:proof:Lem:signe:1}, for all $t > 0$ and almost all $x \in \mathbb{T}^2$. We denote by $w$ both the solution on $\mathbb{T}^2$ and its periodic extension as a function on $\mathbb{R}^2$. For $t > 0$ and $y \in \mathbb{R}^2$, set 
\begin{equation*}
    K_t(y) = \mathbb{1}_{B(0,t)}(y) \frac{1}{\sqrt{t^2 - \vert y \vert^2}}.
\end{equation*}
Using polar coordinates, one finds
\begin{equation*}
    \Vert K_t \Vert_{L^1(\mathbb{R}^2)} 
    = \int_{B(0,t)} \frac{d y}{\sqrt{t^2 - \vert y \vert^2}} 
    = 2 \pi \int_0^t \frac{r}{\sqrt{t^2 - r^2}} d r 
    < + \infty.
\end{equation*}
Let $t > 0$ and $g \in L^2(\mathbb{T}^2)$, extended by periodicity. Then Young's inequality gives
\begin{align*}
    \Vert K_t \ast g \Vert_{L^2(\mathbb{T}^2)}^2 
    & \leq \int_{\mathbb{R}^2} \left\vert K_t \ast \left( g \mathbb{1}_{[0, 2\pi]^2 + B(0,t)} \right) \right\vert^2(x) d x \\
    & \leq \Vert K_t \Vert_{L^1(\mathbb{R}^2)}^2 \Vert g \Vert_{L^2([0, 2\pi]^2 + B(0,t))}^2 
    \leq C_t \Vert g \Vert_{L^2(\mathbb{T}^2)}^2,
\end{align*}
for some $C_t > 0$, where the sum of two sets is defined by $E_1 + E_2 = \{x+y, x \in E_1, y \in E_2\}$. Hence, the operator 
\begin{equation*}
    \dot{w}_0 \in L^2(\mathbb{T}^2) \longmapsto \frac{1}{2 \pi} K_t \ast \dot{w}_0 \in L^2(\mathbb{T}^2)
\end{equation*}
is well-defined and continuous. By density of smooth functions in $L^2(\mathbb{T}^2)$, this proves that Poisson's formula
indeed coincides punctually with the solution given by Proposition \ref{Lem:A}.
\end{proof}

\medskip

\begin{remark}
    \Cref{Lem:signe} does not hold for $d=3$. To justify this, we use Kirchhoff’s representation formula (see \cite{evans-book}, Section 4.2.1, equation (22) on p. 69, and Theorem 2 on p. 73) 
    \begin{equation} \label{Ondes_3d}
    w(t,x)=t \int_{\partial B (x,t)} \dot{w}_0(y) d\sigma_t (y),
    \end{equation}
    and a well-chosen initial condition 
    $\dot{w}_0 \in C^2(\T^3)$ satisfying the assumptions of \Cref{Lem:signe}.
    The formula \eqref{Ondes_3d} proves that, for any point $x \in \T^d$, there exists a time $t \in \R_+$ at which $w(t,x)>0$. But this time may fail to be $r(W_0)^+$ uniformly with respect to $x$. 
    
    We give an example to illustrate this phenomenon. 
    Let us consider
    $0<R'<R<\frac{\pi}{3}$, 
    $x_0 \in \T^d$ and 
    $K=\{ x \in \T^3 ; |x-x_0| \notin (R',R'+2R) \}$ 
    which is the complementary of a ring surrounding $x_0$.
    The initial velocity $\dot{w}_0$ is an element of $C^{\infty}(\T^3,\R^+)$ such that $\text{Supp}(\dot{w}_0)=K$.
    For $\epsilon>0$ small enough and $x \in B(x_0,\epsilon)$, the function
    $t \mapsto w(t,x)$ vanishes on $(R'+\epsilon,R'+2R-\epsilon)$, because the integrals in \eqref{Ondes_3d} are on spheres contained in the ring $\T^d \setminus K$. Thus 
    $B(x_0,\epsilon) \subset Z( w(t,.) )$ for any $t \in (R'+\epsilon,R'+2R-\epsilon)$, which is a neighborhood of $R$. 
    
\end{remark}

\subsection{Small-time reachability of $(0,\varphi)$}
\label{subsec:Reach0phi}

\begin{proposition} \label{Prop:reach0varphi}
Let $W_0=(w_0,\dot{w}_0) \in H^1 \times L^2(\T^d)$ and $\varphi \in H^1(\T^d)$ such that $Z(w_0)=Z(W_0)=Z(\varphi)$. The target $(0,\varphi)$ is small-time $H^1 \times L^2$-approximately reachable from $W_0$.
\end{proposition}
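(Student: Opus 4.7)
The plan is to realize the four-step chain sketched in Section~1.4 (Proof strategy), with the role of $\phi$ played by $\varphi$:
\begin{equation*}
\begin{pmatrix} w_0 \\ \dot{w}_0 \end{pmatrix}
\xrightarrow[]{\quad\quad} \begin{pmatrix} w_0 \\ -\varphi - w_0 \end{pmatrix}
\xrightarrow[]{\; e^{B^*}\;} \begin{pmatrix} -\varphi \\ -\varphi - w_0 \end{pmatrix}
\xrightarrow[]{\quad\quad} \begin{pmatrix} -\varphi \\ \varphi \end{pmatrix}
\xrightarrow[]{\; e^{B^*}\;} \begin{pmatrix} 0 \\ \varphi \end{pmatrix}.
\end{equation*}
Steps~1 and~3 are small-time changes of the velocity (profile unchanged), handled by Theorem~\ref{Prop:STAC_vel}, while Steps~2 and~4 are the small-time $e^{B^*}$-transfer, provided by Proposition~\ref{Prop:STAR_fin}.

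Next I would verify the compatibility conditions at each arrow. From $Z(W_0)=Z(w_0)$ we read $\dot{w}_0=0$ a.e.\ on $Z(w_0)$, and from $Z(\varphi)=Z(w_0)$ we read $\varphi=0$ a.e.\ on $Z(w_0)$. Hence the Step~1 velocity increment $(-\varphi-w_0)-\dot{w}_0$ vanishes a.e.\ on $Z(w_0)$, so Theorem~\ref{Prop:STAC_vel} applies. Similarly, the Step~3 increment $\varphi-(-\varphi-w_0)=2\varphi+w_0$ vanishes a.e.\ on $Z(-\varphi)=Z(\varphi)$. For Steps~2 and~4, the assumption $\varphi,w_0\in H^1(\T^d)$ places the intermediate states $(w_0,-\varphi-w_0)$ and $(-\varphi,\varphi)$ in $H^1\times H^1(\T^d)$, which is exactly the regularity demanded by Proposition~\ref{Prop:STAR_fin}.

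The remaining step is the standard concatenation argument already used in Section~\ref{subsec:STAC_1}. Given $\epsilon>0$, I would construct the four controls backward: Proposition~\ref{Prop:STAR_fin} yields $T_4\in(0,\epsilon/4)$ and $u_4\in PWC$ with $\|W(T_4;u_4,W_3)-(0,\varphi)\|<\epsilon/2$; by continuity of $W(T_4;u_4,\cdot)$ on $H^1\times L^2(\T^d)$ (Proposition~\ref{Lem:A+L}) this gives a tolerance $\delta_4>0$. Theorem~\ref{Prop:STAC_vel} then produces $T_3,u_3$ steering $W_2$ to within $\delta_4/2$ of $W_3$, with an associated tolerance $\delta_3$; the same scheme produces $(T_2,u_2,\delta_2)$ via Proposition~\ref{Prop:STAR_fin} and finally $(T_1,u_1)$ with $\|W(T_1;u_1,W_0)-W_1\|<\delta_2$ via Theorem~\ref{Prop:STAC_vel}. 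The concatenation of $u_1,u_2,u_3,u_4$ on $[0,T]$ of length $T=T_1+T_2+T_3+T_4<\epsilon$ satisfies $\|W(T;u,W_0)-(0,\varphi)\|<\epsilon$.

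I expect no serious obstacle: the triple equality $Z(w_0)=Z(W_0)=Z(\varphi)$ was tailored so that the two velocity-change steps fit the hypothesis of Theorem~\ref{Prop:STAC_vel}, and the requirement $\varphi\in H^1$ (rather than merely $L^2$) is what allows the two $e^{B^*}$-transfers to take place at the $H^1\times H^1$ regularity needed for Proposition~\ref{Prop:STAR_fin}. The only routine care required is the backward continuity bootstrap, which is identical in form to the one detailed in Section~\ref{subsec:STAC_1}.
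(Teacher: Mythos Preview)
Your proof is correct and follows essentially the same approach as the paper: the identical four-step chain, the same verification of the zero-set conditions for Theorem~\ref{Prop:STAC_vel} and the $H^1\times H^1$ regularity for Proposition~\ref{Prop:STAR_fin}, and the same backward continuity bootstrap.
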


\begin{proof}
Let $\epsilon>0$. We use the following steps in our control strategy
$$\begin{aligned}
& W_0=\begin{pmatrix}
    w_0 \\ \dot{w}_0
\end{pmatrix}
 \quad \xrightarrow[T_1,r_1]{} \quad
W_1=\begin{pmatrix}
    w_0 \\ - \varphi - w_0
\end{pmatrix}
\quad \xrightarrow[T_2,u_2]{e^{B^*}} \quad
W_2=\begin{pmatrix}
    -\varphi \\ -\varphi - w_0
\end{pmatrix}
\\ & 
\quad \xrightarrow[T_3,u_3]{} \quad
W_3=\begin{pmatrix}
    -\varphi \\ \varphi
\end{pmatrix}
\quad \xrightarrow[T_4,u_4]{e^{B^*}} \quad
W_4=\begin{pmatrix}
    0 \\ \varphi
\end{pmatrix}.
\end{aligned}$$
$\bullet$ Since $W_3 \in H^1 \times H^1(\T^d)$ and $W_4=e^{B^*}W_3$, by \Cref{Prop:STAR_fin}, there exists $T_4 \in (0,\frac{\epsilon}{4})$ and $u_4 \in PWC((0,T_4),\R^{2d+1})$
such that 
$\| W(T_4;u_4,W_3)-W_4\|<\frac{\epsilon}{2}$. By continuity of $W(T_4;u_4,.)$ on $H^1 \times L^2(\T^d)$, there exists $\delta_4>0$ such that, for every $\widetilde{W}_0 \in H^1 \times L^2(\T^d)$ with $\|\widetilde{W}_0-W_3\|<\delta_4$ then $\| W(T_4;u_4,\widetilde{W}_0)-W_4\|<\epsilon$

\medskip

\noindent $\bullet$ Since $2\varphi+w_0=0$ a.e. on $Z(\varphi)$, by \Cref{Prop:STAC_vel}, there exists $T_3 \in (0,\frac{\epsilon}{4})$ and\newline$u_3 \in PWC((0,T_3),\R^{2d+1})$ such that 
$\| W(T_3;u_3,W_2)-W_3\|<\frac{\delta_4}{2}$. By continuity of $W(T_3;u_3,.)$ on $H^1 \times L^2(\T^d)$, there exists $\delta_3>0$ such that, for every $\widetilde{W}_0 \in H^1 \times L^2(\T^d)$ with $\|\widetilde{W}_0-W_2\|<\delta_3$ then $\| W(T_3;u_3,\widetilde{W}_0)-W_3\|<\delta_4$.

\medskip

\noindent $\bullet$ Since $W_1 \in H^1 \times H^1(\T^d)$ and $W_2=e^{B^*}W_1$, by \Cref{Prop:STAR_fin}, there exists $T_2 \in (0,\frac{\epsilon}{4})$ and $u_2 \in PWC((0,T_2),\R^{2d+1})$
such that 
$\| W(T_2;u_2,W_1)-W_2\|<\frac{\delta_3}{2}$. By continuity of $W(T_2;u_2,.)$ on $H^1 \times L^2(\T^d)$, there exists $\delta_2>0$ such that, for every $\widetilde{W}_0 \in H^1 \times L^2(\T^d)$ with $\|\widetilde{W}_0-W_1\|<\delta_2$ then $\| W(T_2;u_2,\widetilde{W}_0)-W_2\|<\delta_3$.

\medskip 

\noindent $\bullet$ Since $-\varphi-w_0-\dot{w}_0=0$ a.e. on $Z(w_0)$, by \Cref{Prop:STAC_vel}, there exists $T_1 \in (0,\frac{\epsilon}{4})$ and $u_1 \in PWC((0,T_1),\R^{2d+1})$ such that 
$\| W(T_1;u_1,W_0)-W_1\|<\delta_2$. 

\medskip

Finally, with $T=T_1+\dots+T_4 \in (0,\epsilon)$ and $u:[0,T] \to \R^{2d+1}$ the concatenation of $u_1$, \dots, $u_4$, we obtain $\|W(T;u,W_0)-W_4\|<\epsilon$.
\end{proof}

\subsection{Proof of \Cref{Prop:Tmin}}
\label{subsec:proofTmin}

We consider a target $W_f=(w_f,\dot{w}_f) \in H^1 \times L^2(\T^d)$ and $\epsilon>0$.

\bigskip

\noindent \emph{Proof of the first statement of \Cref{Prop:Tmin}:}
Let $W_0=(0,\dot{w}_0) \in H^1 \times L^2(\T^d)$ be a non zero initial condition such that $\dot{w}_0 \geq 0$ a.e. on  $\T^d$. We use the following steps in our control strategy
$$W_0=\begin{pmatrix}
    0 \\ \dot{w}_0
\end{pmatrix}
\quad \xrightarrow[u=e_0]{} \quad
W_1=\begin{pmatrix}
    w_1 \\ \dot{w}_1
\end{pmatrix}=e^{(r(W_0)+\frac{\epsilon}{2}) \widetilde{A}}W_0
\quad \xrightarrow[T_2,u_2]{} \quad
W_f=\begin{pmatrix}
    w_f \\ \dot{w_f}
\end{pmatrix}.$$
By \Cref{Lem:signe}, $|Z(w_1)|=0$ thus the first statement of \Cref{Prop:STAC} provides $T_2 \in (0,\frac{\epsilon}{2})$ and $u_2 \in PWC([0,T_2],\R^{2d+1})$ such that $\|W(T_2;u_2,W_1)-W_f\|<\epsilon$. Finally, with $T=r(W_0)+\frac{\epsilon}{2}+T_1 \in (r(W_0),r(W_0)+\epsilon)$ and $u:[0,T] \to \R^{2d+1}$ the concatenation of $e_0$ and $u_2$, we obtain $\|W(T;u,W_0)-W_f\|<\epsilon$.

\bigskip

\noindent \emph{Proof of the second statement of \Cref{Prop:Tmin}:}
Let $W_0=(w_0,\dot{w}_0) \in H^1 \times L^2(\T^d)$ be a non zero initial condition such that $\dot{w}_0=0$ a.e. on $Z(w_0)$. Then $w_0 \neq 0$. We use the following steps in our control strategy
$$\begin{aligned}
& W_0=\begin{pmatrix}
    w_0 \\ \dot{w}_0
\end{pmatrix}
\quad \xrightarrow[T_1,u_1]{} \quad
W_1=\begin{pmatrix}
    0 \\ -|w_0|
\end{pmatrix}
\quad \xrightarrow[T_2,u_2]{} \quad
W_f=\begin{pmatrix}
    w_f \\ \dot{w_f}
\end{pmatrix}.
\end{aligned}$$

\medskip

\noindent $\bullet$ Since $Z(W_1)=Z(W_0)$, by the first statement of \Cref{Prop:Tmin}, there exist 
$T_2 \in (r(W_0),r(W_0)+\frac{\epsilon}{2})$ and $u_2 \in PWC([0,T_2],\R^{2d+1})$ such that $\|W(T_2;u_2,W_1)-W_f\|<\frac{\epsilon}{2}$. By continuity of $W(T_2;u_2,\cdot)$ on $H^1 \times L^2(\T^d)$, there exists $\delta_2>0$ such that, for every $\widetilde{W}_0 \in H^1 \times L^2(\T^d)$ with $\|\widetilde{W}_0-W_1\|<\delta_2$ then $\| W(T_2;u_2,\widetilde{W}_0)-W_f\|<\epsilon$.

\medskip

\noindent $\bullet$ The assumption $w_0 \in H^1(\T^d)$ implies $|w_0| \in H^1(\T^d)$ (see, for instance, \cite[Theorem 4.4]{EvansGariepy}), moreover $Z(w_0)=Z(|w_0|)$ thus, by \Cref{Prop:reach0varphi}, there exist 
$T_1 \in (0,\frac{\epsilon}{2})$ and\newline$u_1 \in PWC([0,T_1],\R^{2d+1})$ such that $\|W(T_1;u_1,W_0)-W_f\|<\delta_2$.

\medskip

Finally, with $T:=T_1+T_2 \in (0,\epsilon)$ and $u:[0,T] \to \R^{2d+1}$ the concatenation of $u_1$, $u_2$, we obtain $\|W(T;u,W_0)-W_f\|<\epsilon$.

\bigskip

\noindent \emph{Proof of the third statement of \Cref{Prop:Tmin}:}
Let $W_0=(w_0,\dot{w}_0) \in H^1 \times H^1(\T^d)$ be a non zero initial condition. By \Cref{Lem:w0+aw0.}, there exists a small-time $H^1 \times L^2$-approximately reachable target $W_1=(w_1,\dot{w}_1) \in H^1 \times L^2(\T^d)$ such that $Z(w_1)=Z(W_1)=Z(W_0)$. 
We use the following steps in our control strategy
$$W_0=\begin{pmatrix}
    w_0 \\ \dot{w}_0
\end{pmatrix}
\quad \xrightarrow[T_1,u_1]{} \quad
W_1=\begin{pmatrix}
    w_1 \\ \dot{w}_1
\end{pmatrix}
\quad \xrightarrow[T_2,u_2]{} \quad
W_f=\begin{pmatrix}
    w_f \\ \dot{w_f}
\end{pmatrix}.$$

\noindent $\bullet$ Since $Z(W_1)=Z(w_1)$ and $r(W_1)=r(W_0)$,
by the second statement of \Cref{Prop:Tmin}, there exists $T_2 \in (r(W_0),r(W_0)+\frac{\epsilon}{2})$ and $u_2 \in PWC([0,T_2],\R^{2d+1})$ such that $\|W(T_2;u_2,W_1)-W_f\|<\frac{\epsilon}{2}$. By continuity of $W(T_2;u_2,\cdot)$ on $H^1 \times L^2(\T^d)$, there exists $\delta_2>0$ such that, for every $\widetilde{W}_0 \in H^1 \times L^2(\T^d)$ with $\|W_1-\widetilde{W}_0\|<\delta_2$ then $\| W(T_2;u_2,\widetilde{W}_0)-W_f\|<\epsilon$.

\medskip

\noindent $\bullet$ By definition of $W_1$, there exists $T_1 \in (0,\frac{\epsilon}{2})$ and $u_1 \in PWC([0,T_1],\R^{2d+1})$ such that \newline$\|W(T_1;u_1,W_0)-W_1\|<\delta_2$. 

\medskip

Finally, with $T:=T_1+T_2 \in (r(W_0),r(W_0)+\epsilon)$ and $u:[0,T] \to \R^{2d+1}$ the concatenation of $u_1$ and $u_2$, we obtain $\|W(T;u,W_0)-W_f\|<\epsilon$.

\section{Large time control results}
\label{sec:Large_time}

In this section, we prove \Cref{Prop:large_time}. In \Cref{subsec:LT1}, we identify initial conditions from which one can reach in large time a state $W_1$ with strictly positive profile (thus to which the first statement of \Cref{Prop:STAC} applies). These initial conditions have the form $W_0=(0,\dot{w}_0)$ where $\dot{w}_0$ is $\geq 0$ (resp. $\leq 0$) a.e. on $\T^d$ and sufficiently regular.

In \Cref{subsec:LT2}, we explain how to reach in arbitrary small time such a state from any initial condition $W_0$, for which $Z(W_0)$ is closed when $d \geq 4$. This closure assumption is related to the regularity assumption in the previous result. Its is necessary because of the different natures of the zeros of $H^1$ and $H^s$ functions when $1 \leq \frac{d}{2}-1<s$.

Finally, in \Cref{subsec:LT3}, we prove \Cref{Prop:large_time}.

\subsection{Reaching positive profiles}
\label{subsec:LT1}

\begin{proposition} \label{Prop:profile>0}
    Let 
    $\widetilde{A}$ be defined by \eqref{def:Atilde},
    $s>\frac{d}{2}-1$, 
    $\dot{w}_0 \in H^{s}(\T^d)$ be such that $c_0(\dot{w}_0) > 0$,
    $W_0=(0,\dot{w}_0)$ and 
    for every $t \in \R$, $(w(t),w_t(t))=e^{t\widetilde{A}} W_0$. 
    There exists $T_1 \in \R_+$ such that, for every $t>T_1$ then $w(t,.)>0$ on $\T^d$.
\end{proposition}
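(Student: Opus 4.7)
The plan is to exploit that $\widetilde{A}$ generates the pure wave group on $\T^d$ (without the Klein--Gordon mass), for which the zero-frequency mode grows linearly in $t$ while all other modes remain bounded. Unlike the explicit D'Alembert and Poisson formulas used in \Cref{Lem:signe} for $d \in \{1,2\}$, in arbitrary dimension I would work directly in Fourier series. Solving the equation mode by mode with the initial data $w_0 = 0$, $\partial_t w(0) = \dot{w}_0$ gives
\begin{equation*}
w(t,x) = c_0(\dot{w}_0)\, t + \sum_{n \in \mathbb{Z}^d \setminus \{0\}} c_n(\dot{w}_0)\, \frac{\sin(|n|t)}{|n|}\, e^{i n \cdot x} =: c_0(\dot{w}_0)\, t + g(t,x).
\end{equation*}
Since $c_0(\dot{w}_0) > 0$, the first term grows linearly in $t$ uniformly in $x$, and the whole task reduces to a uniform-in-$t$ bound on $\|g(t,\cdot)\|_{L^\infty(\T^d)}$.

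To establish that bound I would pick $\sigma = s+1$: the hypothesis $s > \tfrac{d}{2}-1$ becomes $\sigma > \tfrac{d}{2}$, so the Sobolev embedding $H^\sigma(\T^d) \hookrightarrow C^0(\T^d)$ is available. Using the elementary inequality $(1+|n|^2)^\sigma / |n|^2 \leq 2(1+|n|^2)^s$ valid for $|n| \geq 1$, Parseval yields $\|g(t,\cdot)\|_{H^\sigma} \leq \sqrt{2}\,\|\dot{w}_0\|_{H^s}$ uniformly in $t \in \R$. Composing with the Sobolev embedding produces a constant $C = C(d,s) > 0$ such that $\|g(t,\cdot)\|_{L^\infty(\T^d)} \leq C\,\|\dot{w}_0\|_{H^s}$ for every $t \in \R$, and in particular $g(t,\cdot)$, hence $w(t,\cdot)$, is continuous on $\T^d$ so the pointwise positivity statement is meaningful.

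Combining the two estimates, $w(t,x) \geq c_0(\dot{w}_0)\, t - C\,\|\dot{w}_0\|_{H^s}$ for all $t \in \R_+$ and $x \in \T^d$, so the choice $T_1 := C\,\|\dot{w}_0\|_{H^s}/c_0(\dot{w}_0)$ concludes. The only delicate step is the uniform $L^\infty$ estimate on $g$: the factor $1/|n|$ coming from $\sin(|n|t)/|n|$ provides exactly one derivative of gain in the $\ell^2$-summation, so one loses one Sobolev index with respect to the standard threshold $d/2$. This is precisely where the hypothesis $s > d/2 - 1$ enters and, by this argument, it cannot be relaxed; it also explains why the first item of \Cref{Prop:large_time} imposes regularity beyond the basic well-posedness class $H^1 \times L^2$.
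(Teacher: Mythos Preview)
Your proof is correct and follows essentially the same approach as the paper: both exploit the explicit Fourier solution, isolate the linearly growing zero mode, and bound the remainder uniformly in $t$ using the $1/|n|$ gain together with $s>\tfrac{d}{2}-1$. The only cosmetic difference is that the paper bounds the remainder directly by $\sum_{n\neq 0}|c_n(\dot w_0)|/|n|$ via Cauchy--Schwarz in $\ell^2$, whereas you route the same estimate through the Sobolev embedding $H^{s+1}\hookrightarrow C^0$; these are equivalent here.
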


\begin{remark}
    In dimension $d=3$, one may take $s=1$, which is not the case when $d \geq 4$. 
\end{remark}

\begin{proof}
We have the following explicit expression of the solution
$$ w(t,x)=c_0(\dot{w}_0) t + \sum_{n\in\mathbb{Z}^d \setminus \{0\}} c_n(\dot{w}_0) \frac{\sin(|n|t)}{|n|} e^{i n \cdot x} $$
thus, for every $(t,x) \in \R^+ \times \T^d$, we have
$w(t,x) \geq c_0(\dot{w}_0) t  - M$ where
$$M:= \sum_{n\in\mathbb{Z}^d \setminus \{0\}}  \frac{|c_n(\dot{w}_0)|}{|n|} 
\leq 
\left\| |n|^s c_n(\dot{w}_0) \right\|_{\ell^2} \, 
\left\| \frac{1}{|n|^{s+1}} \right\|_{\ell^2}
\leq C \| \dot{w}_0 \|_{H^s}.$$
This upper bound is finite because $2(s+1)>d$.
We conclude with $T_1=M/c_0(\dot{w}_0)$.
\end{proof}

\subsection{Preparation}
\label{subsec:LT2}

\begin{proposition} \label{Prop:F_phi}
If $F$ is a closed subset of $\mathbb{T}^d$, then there exists $\varphi \in C^{\infty}(\T^d,\R^+)$ such that $Z(\varphi)=F$.
\end{proposition}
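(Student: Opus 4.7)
The plan is the standard Whitney-type construction of a smooth non-negative function with prescribed zero set. Set $U := \mathbb{T}^d \setminus F$, which is open. Since $\mathbb{T}^d$ is second countable, one can exhibit a countable family of open balls $(B_n)_{n \in \mathbb{N}} = (B(x_n, r_n))_{n \in \mathbb{N}}$ with $\overline{B_n} \subset U$ and $U = \bigcup_n B_n$: for instance, take the (countable) collection of all balls with rational center and rational radius whose closure lies in $U$; every point of $U$ belongs to at least one such ball by openness.

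For each $n$, I would choose an explicit smooth bump
\begin{equation*}
\psi_n(x) = \begin{cases} \exp\!\left( - \dfrac{1}{r_n^2 - d(x,x_n)^2} \right) & \text{if } d(x,x_n) < r_n, \\ 0 & \text{otherwise}, \end{cases}
\end{equation*}
where $d$ denotes the geodesic distance on $\mathbb{T}^d$. Then $\psi_n \in C^\infty(\mathbb{T}^d, \mathbb{R}^+)$, and crucially $\{\psi_n > 0\} = B_n$. Now define
\begin{equation*}
\varphi := \sum_{n \in \mathbb{N}} c_n \psi_n, \qquad c_n := \frac{1}{2^n (1 + \|\psi_n\|_{C^n(\mathbb{T}^d)})} > 0.
\end{equation*}
For any fixed multi-index $\alpha$ with $|\alpha| \leq k$, the tail $\sum_{n \geq k} c_n \partial^\alpha \psi_n$ is bounded in sup norm by $\sum_{n \geq k} 2^{-n}$, so the series converges uniformly together with all its derivatives. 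Hence $\varphi \in C^\infty(\mathbb{T}^d, \mathbb{R}^+)$.

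It remains to check $Z(\varphi) = F$. If $x \in F$, then $x \notin \overline{B_n}$ for all $n$, so $\psi_n(x) = 0$ for every $n$ and thus $\varphi(x) = 0$. Conversely, if $x \in U$, the covering property gives some $n$ with $x \in B_n$, so $\psi_n(x) > 0$; since all $\psi_m \geq 0$ and $c_n > 0$, we conclude $\varphi(x) > 0$. This gives $Z(\varphi) = F$ in the pointwise sense (so certainly in the a.e. sense used elsewhere).

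The only delicate point is the bookkeeping for $C^\infty$ convergence; this is purely standard and the weights $c_n$ above are tailored to it. The closedness of $F$ enters exactly once, when we assert that $x \in F$ implies $x \notin \overline{B_n}$ for every $n$: had $F$ not been closed, $x$ could lie on the boundary of some $B_n$, and $\psi_n(x)$ would still vanish there, but we would not have been able to ensure the covering $\bigcup_n B_n = \mathbb{T}^d \setminus F$ (rather than merely $\supset$) without the closedness hypothesis.
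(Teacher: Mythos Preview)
Your proof is correct and follows essentially the same construction as the paper: cover the complement $U=\mathbb{T}^d\setminus F$ by countably many balls, place a smooth non-negative bump on each, and sum with weights $2^{-n}/\|\psi_n\|_{C^n}$-type coefficients to force $C^\infty$ convergence. The only cosmetic differences are that you insist on $\overline{B_n}\subset U$ and write the bumps explicitly, whereas the paper merely asks $\text{Supp}(\varphi_n)=\overline{B_n}$; both choices lead to the same conclusion.
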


\begin{proof}
If $F=\T^d$ then $\varphi=1$ gives the conclusion. Now, we assume $F$ is a strict closed subset of $\T^d$. Then $\Omega=\T^d \setminus F$ is a non empty open subset of $\T^d$ thus there exists a sequence $(B_n)_{n\in\mathbb{N}}$ of open ball of $\T^d$ such that $\Omega = \cup_{n\in\mathbb{N}} B_n$. For every $n \in \mathbb{N}$, one easily construct $\varphi_n \in C^{\infty}(\T^d,\R_+)$ such that $\text{Supp}(\varphi_n)=\overline{B_n}$. Then, for every $s\in\mathbb{N}$, the series
$$\phi := \sum_{n\in\mathbb{N}} \frac{1}{2^n \|\varphi_n\|_{C^n}} \varphi_n $$
converges absolutely in $C^s(\T^d)$, thus $\phi \in C^{\infty}(\T^d)$. Moreover, $Z(\phi)=F$.
\end{proof}

\subsection{Proof of \Cref{Prop:large_time}}
\label{subsec:LT3}

\noindent $\bullet$ Let $s>\frac{d}{2}-1$,
$\dot{w}_0 \in H^{s}(\T^d)$ be a non zero function such that 
$\dot{w}_0 \geq 0$ a.e. on $\T^d$, 
and $W_0=(0,\dot{w}_0)$. 
Let $T_1 \in \R_+$ be given by \Cref{Prop:profile>0}. We use the following steps in our control strategy
$$W_0=\begin{pmatrix}
    0 \\ \dot{w}_0
\end{pmatrix}
\quad \xrightarrow[u \equiv e_0]{\text{Prop \ref{Prop:profile>0}}} \quad
W_1=\begin{pmatrix}
    w_1 \\ \dot{w}_1
\end{pmatrix}=e^{T_1 \widetilde{A} }W_0
\quad \xrightarrow[T_2=0^+]{\text{Thm \ref{Prop:STAC}\emph{.1}}} \quad
W_f=\begin{pmatrix}
    w_f \\ \dot{w_f}
\end{pmatrix}.$$

\bigskip

\noindent $\bullet$ Let $W_0 \in H^1 \times L^2(\T^d)$ such that $Z(w_0)=Z(W_0)$. If $d=3$, we take $\varphi:=|w_0|$ that belongs to $H^1(\T^d)$ (see \cite[Theorem 4.4]{EvansGariepy}). If $d \geq 4$, we assume that $Z(W_0)$ is closed and we consider $\varphi \in C^{\infty}(\T^d)$ such that $Z(\varphi)=Z(W_0)$ given by \Cref{Prop:F_phi}. For any value of $d$, the fonction $\varphi$ belongs to $H^{s}(\T^d)$ for some $s>\frac{d}{2}-1$ and $Z(\varphi)=Z(W_0)$. We use the following steps in our control strategy
$$\begin{aligned}
& W_0=\begin{pmatrix}
    w_0 \\ \dot{w}_0
\end{pmatrix}
\quad \xrightarrow[T_1=0^+]{\text{Prop \ref{Prop:reach0varphi}}} \quad
W_1=\begin{pmatrix}
    0 \\ \varphi
\end{pmatrix}
\quad \xrightarrow[]{\text{Thm \ref{Prop:large_time}.1}} \quad
W_f=\begin{pmatrix}
    w_f \\ \dot{w_f}
\end{pmatrix}.
\end{aligned}$$

\bigskip

\noindent $\bullet$ Let $W_0 \in H^1 \times H^1(\T^d)$. If $d \geq 4$, we assume moreover that $Z(W_0)$ is closed. By \Cref{Lem:w0+aw0.}, there exists a small-time reachable target $W_1=(w_1,\dot{w}_1)$ such that $Z(w_1)=Z(W_1)=Z(W_0)$. Our control strategy uses the following steps
$$W_0=\begin{pmatrix}
    w_0 \\ \dot{w}_0
\end{pmatrix}
\quad \xrightarrow[T_1=0^+]{\text{Prop \ref{Lem:w0+aw0.}}} \quad
W_1=\begin{pmatrix}
    w_1 \\ \dot{w}_1
\end{pmatrix}
\quad \xrightarrow[]{\text{Thm \ref{Prop:large_time}.2}} \quad
W_f=\begin{pmatrix}
    w_f \\ \dot{w_f}
\end{pmatrix}.$$

\section{Discussion, open problems}
\label{sec:Discussion}

\subsection{The closure assumption on $Z(W_0)$}
\label{subsec:Z(W0)close}

\Cref{Prop:F_phi} illustrates that,
along the scale of spaces $(C^s(\T^d,\R))_{s\in\mathbb{N}}$, the level sets do not change in nature: for any function $f \in C^0(\T^d,\R)$, there exists a function $\varphi \in C^{\infty}(\T^d,\R)$ such that $Z(f)=Z(\varphi)$.

The situation seems different along the scale of spaces $(H^s(\T^d,\R))_{s\geq 1}$. For $s>\frac{d}{2}$, the level sets are closed, because of the embedding $H^s(\T^d,\R) \subset C^0(\T^d,\R)$. For $s\leq \frac{d}{2}$, the level set may not be closed. Furthermore, within the range of $s<\frac{d}{2}$, the nature of the level sets seem to depend on $s$. We have not found a complete answer to this question in the literature, and we refer to \cite[Section 4.7 and 4.8]{EvansGariepy} and \cite{Schikorra} for related results.

To prove \Cref{Prop:large_time}\emph{.2.(c)} without the assumption ``$Z(w_0)$ closed'', our control strategy would require the following result: for every $w_0 \in H^1(\T^d)$ there exists $s>\frac{d}{2}-1$ and $\varphi \in H^s(\T^d)$ such that $Z(w_0)=Z(\varphi)$. This statement is probably false.

The following example illustrates the difficulty of the question.
Let $\alpha\in(0, (d-2)/2)$ and $\{x_i\}_{i\in \mathbb{N}}$ be a countable and dense subset of $\T^d$. Then $f(x)=\sum_{i=1}^\infty 2^{-i}|x-x_i|^{-\alpha}$ defines an $H^1(\T^d)$-function which is not in $L^\infty_{\rm loc}(\T^d)$.

\subsection{$H^s \times H^{s-1}$-approximate control}

A natural question concerns the validity of our result if we replace de $H^1 \times L^2$-approximate controllability by the $H^s \times H^{s-1}$-approximate controllability with $s>1$. 
The first step consists in generalizing the small-time control of the velocity, i.e. to adapt the density criterium of \Cref{subsec:DenseCrit}. 

To fix ideas, let us consider the case $s=2$ and consider a non zero initial condition $W_0=(w_0,\dot{w}_0) \in H^2 \times H^1(\T^d)$.
Let $E:=\{\phi \in L^{\rho}(\T^d);\nabla \phi \in L^{\widetilde{\rho}}(\T^d)\}$ so that, for any $\phi \in E$ the function $\phi w_0$ belongs to $H^1(\T^d)$. To formulate a result of small-time $H^2 \times H^1$-approximate controllability from $W_0$, we need to identify the adherence in $H^1(\T^d)$ of the set $\{ \phi w_0 ; \phi \in E \}$.

Clearly, for every $\phi \in E$, we have
$\phi w_0=0$ a.e. on $Z(w_0)$ and $\nabla (\phi w_0) =0$ a.e. on $Z(w_0) \cap Z(\nabla w_0)$ i.e.
$$\text{Adh}_{H^1} \{ \phi w_0 ; \phi \in E \}  \subset
\{ f \in H^1 (\T^d) ; 
Z(w_0) \subset Z(f) \text{ and }
Z(w_0) \cap Z(\nabla w_0) \subset Z(\nabla f)
  \}.$$
Such constraints on $\nabla f$ would be much more complicated to manage in the other stages of the article.

\appendix
\section{Appendix}
\subsection{Properties of STAR operators} \label{Appendix:STAR}

In this section, we prove \Cref{Lem:STAR}. Let $W_0 \in H^1 \times L^2(\T^d)$ and $\epsilon>0$.
\\

\noindent \emph{1.} Let $L_1, L_2$ be $H^1 \times L^2$-STAR operators. There exists $T_2 \in (0,\frac{\epsilon}{2})$ and $u_2 \in PWC([0,T_2],\R^{2d+1})$ such that $\| W(T_2;u_2, L_1 W_0)-L_2 L_1 W_0 \|<\frac{\epsilon}{2}$. By continuity of $W(T_2;u_2, \cdot)$ on $H^1 \times L^2(\T^d)$, there exists $\delta>0$ such that, for every $\widetilde{W}_0 \in H^1 \times L^2(\T^d)$ with $\|\widetilde{W}_0-L_1 W_0\|<\delta$ then $\| W(T_2;u_2,\widetilde{W}_0)-L_2 L_1 W_0 \|<\epsilon$. There exists $T_1 \in (0,\frac{\epsilon}{2})$ and $u_1 \in PWC([0,T_1],\R^{2d+1})$ such that $\| W(T_1;u_1, W_0)-L_1 W_0 \|<\delta$. Finally, with $T=T_1+T_2 \in (0,\epsilon)$ and $u$ the concatenation of $u_1$ and $u_2$, we obtain
$\|W(T;u,W_0)-L_2 L_1 W_0 \|<\epsilon$.
\\

\noindent \emph{2.} Let $(L_n)_{n\in\mathbb{N}}$ be $H^1 \times L^2$-STAR operators that strongly converge towards $L$. There exists $n \in \mathbb{N}$
such that $\|(L_n-L)W_0\|<\frac{\epsilon}{2}$. There exist $T \in (0,\epsilon)$ and $u \in PWC([0,T],\R^{2d+1})$ such that $\|W(T;u,W_0)-L_n W_0\|<\frac{\epsilon}{2}$. Then, by the triangular inquality, $\|W(T;u,W_0)-L W_0\|<\epsilon$.

\subsection{Finite speed of propagation} \label{sec:propagation}

In this section, our goal is to prove \Cref{Prop:propagation}. 
This statement is not specific to the potential $\widetilde{V}(t,x)=V(x)+\sum_{j=0}^{2d} u_j(t) V_j(x)$, we prove it for a general potential  $\widetilde{V} \in L^{1}_{loc}(\R_+,L^{\rho}(\T^d))$.

\medskip

\begin{proof}[Proof of \Cref{Prop:propagation}]
Let 
$\widetilde{V} \in L^1_{loc}(\R_+,L^{\rho}(\T^d))$
$W_0 \in H^1 \times L^2(\T^d)$, $y \in \T^d$ and $r>0$ be such that $W_0=0$ a.e. on $B(y,r)$. Let $W(t)=(w(t),w_t(t)) \in C^0(\R,H^1 \times L^2(\T^d))$ be the solution of $$\frac{dW}{dt}(t)=\left( A+ \widetilde{V}(t) B \right) W(t)$$
We prove that $W$ vanishes a.e. on the cone 
$K_r:=\{ (t,x) \in [0,r]\times \T^d ; |x-y|<r-t\}$, which gives the conclusion. For $t \in [0,r)$, we consider
$$\begin{aligned}
\mathcal{E}(t) & =\int_{B(y,r-t)} \left( |\nabla w(t,x)|^2 + |w(t,x)|^2 + |w_t(t,x)|^2 \right) dx 
\\ & = \int_0^{r-t} \int_{\partial B(y,s)} \left( 
|\nabla w(t,x)|^2 + |w(t,x)|^2 + |w_t(t,x)|^2
\right) d\sigma_s(x)\, ds
\end{aligned}$$
where $d\sigma_s$ denotes the measure on $\partial B(y,s)$.
Let $t^* \in [0,r)$. In what follows, we work with $t \in [0,t^*]$.

\noindent \emph{Step 1: We assume $W_0 \in H^2 \times H^1(\T^d)$.} Then $W \in C^0(\R,H^2 \times H^1(\T^d)) \cap C^1(\R,H^1 \times L^2(\T^d))$, which makes legitimate the following computation
$$\frac{d\mathcal{E}}{dt} =
\int_{B(y,r-t)} 2\left( \nabla w_t \nabla w + w_t w +  w_{tt} w_t \right) dx
- \int_{\partial B(y,r-t)} \left( 
|\nabla w|^2 + |w|^2 + |w_t|^2
\right) d\sigma_{r-t}$$
Using an integration by part, we deduce
$$\frac{d\mathcal{E}}{dt} 
=
\int_{B(y,r-t)} 2w_t \left( -\Delta w + w +  w_{tt} \right) dx
- \int_{\partial B(y,r-t)} \left( 2 w_t \nabla w \cdot \nu +
|\nabla w|^2 + |w|^2 + |w_t|^2
\right) d\sigma_{r-t}$$
where $\nu$ is the outward unit normal to $\partial B(y,r-t)$. Using Young inequality and the equation solved by $W$, we obtain
$$
\frac{1}{2} \frac{d\mathcal{E}}{dt} 
 =
\int_{B(y,r-t)} w_t \widetilde{V} w \leq 
\|w_t\|_{L^2} \| \widetilde{V} w \|_{L^{2}}
\leq 
C \|w_t\|_{L^2} \| \widetilde{V} \|_{L^{\rho}} \| w \|_{H^1}
\leq \frac{C}{2} \|\widetilde{V}\|_{L^{\rho}} \mathcal{E}
$$
where all the norms corresponds to the domain $B(y,r-t)$ and
$C>0$ is such that,
for every $r' \in [r-t^*,r]$, 
for every ball $B_{r'}$ of radius $r'$ in $\T^d$,
for every $V \in L^{\rho}(B_{r'})$ and $f \in H^1(B_{r'})$ then
$\| Vf \|_{L^2(B_{r'})} \leq C \|V\|_{L^{\rho}(B_{r'})} \|f\|_{H^1(B_{r'})}$ (see \Cref{Lem:Sob}).
Finally, for every $t \in [0,t^*]$, 
$$\mathcal{E}(t) \leq \mathcal{E}(0) 
e^{C \int_0^t \|V(\tau)\|_{L^{\rho}} d \tau} =0.$$
This holds for any $t^* \in [0,r)$ thus $\mathcal{E}=0$ on $[0,r)$.

\medskip
\noindent \emph{Step 2: We conclude for $W_0 \in H^1 \times L^2(\T^d)$.}  Let $r' \in (0,r)$. By convolution with a compactly supported approximation of unity, we obtain $(W_0^{\epsilon})_{\epsilon>0} \subset H^2 \times H^1(\T^d)$ such that $\|W_0-W_0^{\epsilon}\| \to 0$ as $\epsilon \to 0$ and $W_0^\epsilon=0$ on $B(y,r')$. By Step 1, the associated solution $W^{\epsilon}$ vanishes on the cone $K_{r'}$ and converges towards $W$ in $C^0([0,r'],H^1 \times L^2(\T^d))$. Therefore $W=0$ a.e. on $K_{r'}$. This holds for every $r' \in (0,r)$ thus $W=0$ a.e. on $K_{r}$.
\end{proof}

\subsection{Formal computations of commutators} 
\label{appendix:brackets}

Let $A, B,F$ be defined in \eqref{def:B} and \eqref{def:F}. Then, formally,

$$[B,A]:=BA-AB=F, \qquad
[B,[B,A]]=-2B, \qquad
\text{ad}_{B}^j(A)=0\,, \forall j\geq 3,$$

The proof of \Cref{Cor:strongCV} relies on the formulas  \eqref{formule1} and \eqref{formule2}. For matrices $A$ and $B$, these formulas can be proved in the following way
$$\begin{aligned}
  \exp(-\gamma B) \exp(t A) \exp( \gamma B)
&=\exp\left( t \exp(-\gamma B) A \exp( \gamma B) \right)
\\ &=   \exp\left( t \sum_{j\in\mathbb{N}} \frac{(-\gamma)^j}{j!} \text{ad}_B^j(A) \right)
\\
&= \exp\left( t \left( A - \gamma [B,A] + \frac{\gamma^2}{2} \text{ad}_B^2(A) \right)\right)\\
&=\exp\left( t \left(A-\gamma F -\gamma^2 B\right) \right),
\end{aligned}$$
and
$$\begin{aligned}
 \exp(-\gamma B) \exp\big(t (A+\gamma^2 B)\big) \exp( \gamma B) &= 
\exp\left( t \exp(-\gamma B) (A+\gamma^2 B) \exp( \gamma B) \right)
\\ &= 
\exp\left( t \sum_{j\in\mathbb{N}} \frac{(-\gamma)^j}{j!} \text{ad}_B^j(A+\gamma^2 B) \right)
\\
&=
\exp \left( t\left(A+\gamma^2 B - \gamma [B,A]+\frac{\gamma^2}{2} \text{ad}_B^2(A) \right)\right)
\\ &=  
\exp \left(t(A-\gamma F)\right).
\end{aligned}$$

Similarly, formal computations prove that 
$$\text{ad}_{F}^j(A)=\begin{pmatrix} 
0 & (-2)^j \\ 2^j(\Delta-1) & 0
\end{pmatrix}.$$
\Cref{Prop:H1*H1} then can be formally proved in the following way, 
$$\begin{aligned}
 \exp(\gamma F) \exp( t A) \exp(-\gamma F) &= 
\exp \left( t \exp(\gamma F) A \exp(-\gamma F) \right) \\&
=
\exp \left( t \sum_{j\in\mathbb{N}} \frac{\gamma^j}{j!} \text{ad}_F^j(A) \right) 
\\ &= 
\exp \left( t 
\begin{pmatrix} 
0 & e^{-2\gamma} \\ e^{2\gamma}(\Delta-1) & 0
\end{pmatrix} \right).
\end{aligned}$$

\textbf{Acknowledgments.} The authors wish to thank Mario Sigalotti for stimulating discussions. The authors are supported by 
ANR-25-CE40-4062 (project QUEST),
ANR-24-CE40-3008-01\newline(project QuBiCCS),
ANR-24-CE40-5470 (Project CHAT),
ANR-11-LABX-0020\newline(Centre Henri Lebesgue),
the Fondation Simone et Cino Del Duca – Institut de France and
the CNRS through the MITI interdisciplinary programs.

\bibliographystyle{unsrt}
\bibliography{references}

\end{document}